\newcommand{\arxiv}[1]{\href{http://arxiv.org/abs/#1}{\texttt{arXiv:#1}}}
\newcommand{\N}{\mathbb{N}}
\newcommand{\Q}{\mathbb{Q}}
\newcommand{\Id}{\textnormal{Id}}
\newcommand{\ev}{\textnormal{ev}}
\newcommand{\ASM}{\textnormal{ASM}}
\newcommand{\NC}{\textnormal{NC}}
\newcommand{\D}{\mathbf{D}}
\renewcommand{\S}{\mathbf{S}}
\theoremstyle{plain}
\newtheorem{thm}{Theorem}
\newtheorem{lem}[thm]{Lemma}
\newtheorem{prop}[thm]{Proposition}
\theoremstyle{definition}
\newtheorem{defi}[thm]{Definition}
\newtheorem{ex}[thm]{Example}
\theoremstyle{remark}
\newtheorem{rem}[thm]{Remark}
\title[FPLs: polynomiality and nested arches]{Fully packed loop configurations: polynomiality and nested arches}
\author{Florian Aigner}
\address{Florian Aigner, Universit\"at Wien, Fakult\"at f\"ur Mathematik, Oskar-Morgenstern-Platz~1, 1090 Wien, Austria}
\email{florian.aigner@univie.ac.at}
\thanks{Supported by the Austrian Science Foundation FWF, START grant Y463.}
\keywords{Fully packed loop configurations, alternating sign matrices, wheel polynomials, nested arches, quantum Knizhnik-Zamolodchikov equations}
\begin{document}
\maketitle

\begin{abstract}
This article proves a conjecture by Zuber about the enumeration of fully packed loops (FPLs). The conjecture states that the number of FPLs whose link pattern consists of two noncrossing matchings which are separated by $m$ nested arches is a polynomial function in $m$ of certain degree and with certain leading coefficient.
Contrary to the approach of Caselli, Krattenthaler, Lass and Nadeau (who proved a partial result) we make use of the theory of wheel polynomials developed by Di Francesco, Fonseca and Zinn-Justin.
We present a new basis for the vector space of wheel polynomials and a polynomiality theorem in a more general setting. This allows us to finish the proof of Zubers conjecture.

\end{abstract}

\section{Introduction}

Alternating sign matrices (ASMs) are combinatorial objects with many different faces. They were introduced by Robbins and Rumsey in the 1980s and arose from generalizing the determinant. Together with Mills, they \cite{ASMs} conjectured a closed formula for the enumeration of ASMs of given size, first proven by Zeilberger \cite{Zeilberger}. Using a second guise of ASMs, the six vertex model,
Kuperberg \cite{Kuperberg} could find a different proof for their enumeration. A more detailed account on the history of the ASM Theorem can be found in \cite{ASM_Book}. \\

A third way of looking at ASMs are fully packed loops (FPLs). We obtain by using the FPL description a natural refined counting $A_\pi$ of ASMs by means of noncrossing matchings. Razumov and Stroganov \cite{Razumov-Stroganov-Conj} conjecturally connected  FPLs to the $O(1)$ loop model, a model in statistical physics. Proven by Cantini and Sportiello \cite{Razumov-Stroganov-Proof}, this connection allows a description of $(A_\pi)_{\pi\in \NC_n}$ as an eigenvector of the Hamiltonian of the $O(1)$ loop model, where $\NC_n$ is the set of noncrossing matchings of size $n$. Assuming the (at that point unproven) Razumov-Stroganov conjecture to be true, Zuber \cite{Zuber} formulated nine conjectures about the numbers $A_\pi$. In this paper we finish the proof of the following conjecture.

\begin{thm}[{\cite[Conjecture $7$]{Zuber}}]
\label{thm: main thm}
 For noncrossing matchings $\pi_1 \in \NC_{n_1},$ $\pi_2 \in \NC_{n_2}$ and an integer $m$, the number of FPLs with link pattern $(\pi_1)_m \pi_2$ is a polynomial in $m$ of degree $|\lambda(\pi_1)|+|\lambda(\pi_2)|$ with leading coefficient $\frac{f_{\lambda(\pi_1)}f_{\lambda(\pi_2)}}{|\lambda(\pi_1)|! |\lambda(\pi_2)|!}$, where $f_\lambda$ denotes the number of standard Young tableaux of shape $\lambda$.
\end{thm}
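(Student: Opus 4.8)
The plan is to abandon the direct FPL combinatorics and to argue inside the ring of wheel polynomials. Recall, from the work of Di Francesco, Fonseca and Zinn-Justin, that for each $n$ there is the vector space $W_n$ of wheel polynomials in $z_1,\dots,z_{2n}$ --- polynomials of degree at most $n-1$ in each variable satisfying the wheel vanishing condition --- that $\dim W_n$ is the Catalan number $C_n=|\NC_n|$, and that the components $\Psi_\pi$ ($\pi\in\NC_n$) of the ground state of the $O(1)$ loop model form a basis of $W_n$ normalized so that $\Psi_\pi(1,1,\dots,1)=A_\pi$, the number of FPLs with link pattern $\pi$. Thus Theorem~\ref{thm: main thm} is equivalent to the assertion that $m\mapsto\Psi_{(\pi_1)_m\pi_2}(1,\dots,1)$ is a polynomial of degree $|\lambda(\pi_1)|+|\lambda(\pi_2)|$ with the stated leading coefficient.

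The first step is to produce a new basis $\{\Phi_\lambda : \lambda\subseteq\delta_{n-1}\}$ of $W_n$ indexed by the Young diagrams contained in the staircase $\delta_{n-1}=(n-1,n-2,\dots,1)$ --- there are exactly $C_n$ of these, and $\pi\mapsto\lambda(\pi)$ is a bijection onto them --- subject to three requirements. (i) \emph{Triangularity}: $\Psi_\pi=\Phi_{\lambda(\pi)}+\sum_{\mu\subsetneq\lambda(\pi)}c_{\pi\mu}\,\Phi_\mu$, i.e.\ the transition to the $\Psi$-basis is unitriangular for the inclusion order. (ii) \emph{Schur-type evaluation}: under adjoining further variables set equal to $1$ the polynomial $\Phi_\lambda$ behaves like the Schur polynomial $s_\lambda$, so that the relevant specializations are governed by the hook-content formula $s_\lambda(1^N)=\prod_{(i,j)\in\lambda}\tfrac{N+j-i}{h(i,j)}$, a polynomial in $N$ of degree $|\lambda|$ with leading coefficient $\bigl(\prod_{(i,j)\in\lambda}h(i,j)\bigr)^{-1}=f_\lambda/|\lambda|!$ by the hook length formula. (iii) \emph{Compatibility with arches}: the operator implementing the insertion of nested arches acts on the $\Phi$-basis in a controlled way, polynomially in the number of arches.

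With such a basis in hand I would prove a polynomiality theorem in the following generality: for $P\in W_{n_1}$ and $Q\in W_{n_2}$, the quantity obtained by gluing $P$ and $Q$ together with $m$ nested arches and then evaluating all variables at $1$ is a polynomial in $m$; moreover for $P=\Phi_\mu$, $Q=\Phi_\nu$ this polynomial has degree exactly $|\mu|+|\nu|$ with leading coefficient $\bigl(\prod_{c\in\mu}h(c)\bigr)^{-1}\bigl(\prod_{c\in\nu}h(c)\bigr)^{-1}$. Heuristically, inserting $m$ nested arches multiplies by a Schur-type factor in roughly $m$ variables on each of the two sides, up to lower-order corrections that the qKZ recursion relations for $\Psi$ keep under control, and property (ii) then reads off degree and leading term. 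Granting this, the theorem drops out. Since $A_{(\pi_1)_m\pi_2}$ is the value of the above construction for $P=\Psi_{\pi_1}$, $Q=\Psi_{\pi_2}$, expanding by means of (i) gives $A_{(\pi_1)_m\pi_2}=\sum_{\mu\subseteq\lambda(\pi_1)}\sum_{\nu\subseteq\lambda(\pi_2)}c_{\pi_1\mu}\,c_{\pi_2\nu}\,p_{\mu\nu}(m)$ with $\deg p_{\mu\nu}=|\mu|+|\nu|$; hence the degree is at most $|\lambda(\pi_1)|+|\lambda(\pi_2)|$, and the coefficient of this top power of $m$ receives a contribution only from the single maximal pair $(\lambda(\pi_1),\lambda(\pi_2))$, where $c_{\pi_1\lambda(\pi_1)}=c_{\pi_2\lambda(\pi_2)}=1$. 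This leaves the leading coefficient $\bigl(\prod_{c\in\lambda(\pi_1)}h(c)\bigr)^{-1}\bigl(\prod_{c\in\lambda(\pi_2)}h(c)\bigr)^{-1}=\dfrac{f_{\lambda(\pi_1)}f_{\lambda(\pi_2)}}{|\lambda(\pi_1)|!\,|\lambda(\pi_2)|!}$, which is in particular nonzero, so the degree is exactly as claimed.

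The hard part will be requirement (iii) together with the polynomiality theorem itself: one must pin down exactly how inserting $m$ nested arches acts at the level of wheel polynomials and show this action is polynomial in $m$ with a Schur-type leading part. This is precisely where the \emph{right} choice of new basis matters --- a careless basis does not diagonalize the arch-insertion operator well enough to expose the leading coefficient --- and where the qKZ exchange and recursion relations, together with careful degree bookkeeping, come in; it is also the part that genuinely goes beyond the partial results of Caselli, Krattenthaler, Lass and Nadeau. Constructing $\{\Phi_\lambda\}$ with properties (i) and (ii) is a secondary but still substantial task, to be carried out by exploiting the minimal-degree structure of $W_n$ together with an identification of its associated graded, for a suitable degree filtration, with a space of Schur polynomials.
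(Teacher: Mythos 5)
Your overall strategy---pass to wheel polynomials via Razumov--Stroganov--Cantini--Sportiello, build a basis indexed by Young diagrams that is unitriangular against the $\Psi_\pi$'s, and prove a polynomiality statement for ``glue with $m$ nested arches and evaluate at $1$''---is indeed the territory this paper works in. But there is a genuine gap: all of the real content is packed into your properties (ii) and (iii) and into the polynomiality theorem with \emph{exact} degree $|\mu|+|\nu|$ and hook-product leading coefficient, and none of this is proved; the only justification offered is the heuristic that inserting $m$ arches ``multiplies by a Schur-type factor up to lower-order corrections.'' That is precisely the hard core, not a detail. Your final deduction needs $p_{\mu\nu}(m)$ to have degree \emph{exactly} $|\mu|+|\nu|$ with a known nonzero leading coefficient: with only an upper bound on the degree you cannot conclude that the top coefficient of $A_{(\pi_1)_m\pi_2}$ comes solely from the maximal pair $(\lambda(\pi_1),\lambda(\pi_2))$, and with an unidentified leading coefficient you cannot evaluate it. Moreover no construction of a basis with the Schur-type evaluation (ii) is given, and it is not at all clear that one exists which is simultaneously compatible with (i) and with arch insertion (iii); also note that the identification $\Psi_\pi(1,\ldots,1)=A_\pi$ and the closure of $W_n[z]$ under $\D_{2n}$ require the specialisation $q=e^{2\pi i/3}$, which your sketch glosses over.

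The paper deliberately avoids exactly the step you leave open. It constructs $D_{\pi_1,\pi_2}$ by applying at most $|\lambda(\pi_1)|+|\lambda(\pi_2)|$ operators $\D_i$ to $\Psi_{()_N}$, proves unitriangularity of $\Psi_{\rho^{n_2}(\pi_1\pi_2)}$ against this family (your (i), Theorems \ref{thm: new basis} and \ref{thm: new basis for splitted matchings}), and then proves only that the evaluation at $z_1=\ldots=z_{2N}=1$ is a polynomial in $m$ of degree \emph{at most} $|\lambda(\pi_1)|+|\lambda(\pi_2)|$: by Lemma \ref{lem: evaluation of P}, applying finitely many $\D$-operators to a product $P(\alpha_{i,j}|\beta_i|\gamma_i)$ and evaluating at $1$ is polynomial in the exponents, and $m$ enters only through the exponents $\beta_i,\gamma_i$. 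The exact degree and leading coefficient are then imported from Caselli--Krattenthaler--Lass--Nadeau, who established them for all sufficiently large $m$; once polynomiality for all $m$ is known, the large-$m$ information determines the polynomial. The remark following Theorem \ref{thm: new basis for splitted matchings} states explicitly that controlling the expansion coefficients and the values $D_{\tau_1,\tau_2}(1,\ldots,1)$---which is what your plan requires---``seems to be very difficult.'' So either supply actual proofs of (ii), (iii) and the exact-leading-term polynomiality theorem (which would be a stronger, self-contained result than the paper's), or restructure the argument as the paper does: prove polynomiality with a degree bound only, and quote the CKLN asymptotics for the degree and leading coefficient.
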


Caselli, Krattenthaler, Lass and Nadeau \cite{on_the_number_of_FPL} proved this for empty $\pi_2$ and showed that $A_{(\pi_1)_m\pi_2}$ is a polynomial for large values of $m$ with correct degree and leading coefficient. In this paper we prove that the number $A_{(\pi_1)_m\pi_2}$ is a polynomial function in $m$, which is achieved without relying on the work of \cite{on_the_number_of_FPL}, and hence finish together with the results of \cite{on_the_number_of_FPL} the proof of Theorem \ref{thm: main thm}.\\

We conclude the introduction by sketching the theory on which the proof of Theorem \ref{thm: main thm} relies and giving an overview of this paper. In the next section we introduce the combinatorial objects and their notions.

As mentioned before the Razumov-Stroganov-Cantini-Sportiello Theorem \ref{thm: Razumov-Stroganov} states that $(A_\pi)_{\pi\in \NC_n}$ is up to multiplication by a constant the unique eigenvector to the eigenvalue $1$ of the Hamiltonian of the homogeneous $O(1)$ loop model.
In Section 3 we present that in a special case solutions of the quantum Knizhnik-Zamolodchikov (qKZ) equations lie in the eigenspace to the eigenvalue $1$ of the Hamiltonian of the inhomogeneous $O(1)$ loop model. Di Francesco and Zinn-Justin \cite{Around_the_RS_conj} could characterise the components of these solutions in a different way, namely as wheel polynomials. The specialisation of the inhomogeneous to the homogeneous $O(1)$ loop model means for wheel polynomials performing the evaluation $z_1=\ldots=z_{2n}=1$. Summarising, for every $\pi \in \NC_n$ there exists an element $\Psi_\pi$ of the vector space $W_n[z]$ of wheel polynomials such that $A_\pi=\Psi_\pi(1,\ldots,1)$.
\begin{align*}
&\textnormal{FPLs} \mathrel{\mathop{\longleftrightarrow}^{\textnormal{RSCS -- Thm}}} \textnormal{hom }O(1)
&\mathrel{\mathop{\longleftarrow}^{\textnormal{specialisation}}} &\textnormal{inhom }O(1) \mathrel{\mathop{\longleftrightarrow}^{\textnormal{Di F. -- Z.\,J.\,}}} & W_n[z]\\
&A_\pi =\Psi_\pi(1,\ldots,1) &\mathrel{\mathop{\longleftarrow}^{\textnormal{evaluation}}} & &\Psi_\pi
\end{align*}
We introduce a new family of wheel polynomials $D_{\pi_1,\pi_2}$ such that every $\Psi_{\rho^{n_2}(\pi_1\pi_2)}$ is a linear combination of $D_{\sigma_1,\sigma_2}$ where $\rho$ is the rotation acting on noncrossing matchings and for $i=1,2$ the Young diagram $\lambda(\sigma_i)$ is included in the Young diagram $\lambda(\pi_i)$.

The advantage of the wheel polynomials $D_{\pi_1,\pi_2}$ over $\Psi_{\pi_1\pi_2}$ becomes clear in Section 4. We prove in Lemma \ref{lem: evaluation of P} in a more general setting that $D_{\pi_1,\pi_2}(1,\ldots,1)$ is a polynomial function with degree at most $|\lambda(\pi_1)|+|\lambda(\pi_2)|$.
This lemma applied in our situation and using the rotational invariance $A_\pi = A_{\rho(\pi)}$ imply the polynomiality in Theorem \ref{thm: main thm}.\\

An extended abstract of this work was published in the Proceedings of FPSAC 2016 \cite{Aigner}.

\section{Definitions}
This section should be understood as a handbook of the combinatorial objects involved in this paper.

\subsection{Noncrossing matchings and Young diagrams}

 A \emph{noncrossing matching} of size $n$ consists of $2n$ points on a line labelled from left to right with the numbers $1, \ldots, 2n$ together with $n$ pairwise noncrossing arches above the line such that every point is endpoint of exactly one arch. An example can be found in Figure \ref{fig: nc matching 1}.
 Denote for two noncrossing matchings $\sigma, \pi$ by $\sigma\pi$ their concatenation. For an integer $n$ we define $(\pi)_n$ as the noncrossing matching $\pi$ surrounded by $n$ nested arches, see Figure \ref{fig: nc matching 2}. Define $\NC_n$ as the set of noncrossing matchings of size $2n$. It is easy to see that $|\NC_n|=C_n=\frac{1}{n+1}\binom{2n}{n}$, where $C_n$ is the $n$-th Catalan number.\\

\begin{figure}
 \centering
 \includegraphics[width=0.5\textwidth]{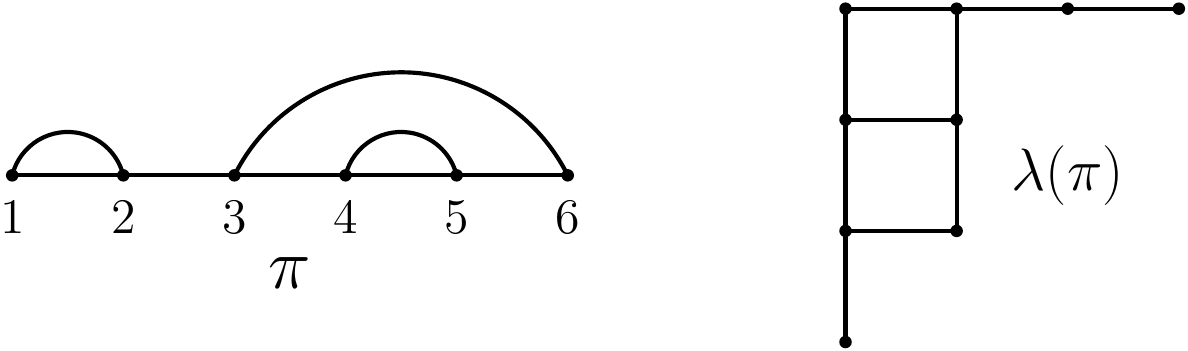}
 \caption{\label{fig: nc matching 1} A noncrossing matching $\pi$ of size $3$ and its corresponding Young diagram $\lambda(\pi)$.}
\end{figure}
\begin{figure}
 \centering
 \includegraphics[width=0.75\textwidth]{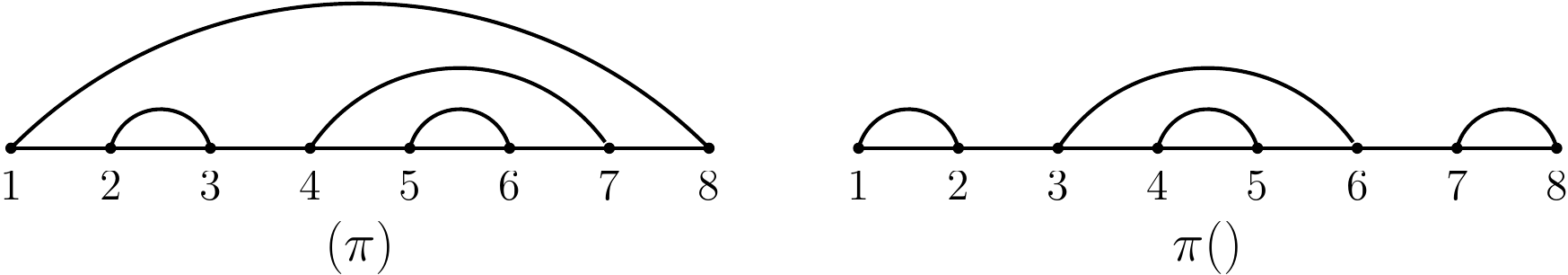}
 \caption{\label{fig: nc matching 2} The noncrossing matchings $(\pi)$ and $\pi()$ where $\pi$ is the noncrossing matching of Figure \ref{fig: nc matching 1}.}
\end{figure}

A \emph{Young diagram} is a finite collection of boxes, arranged in left-justified rows and weakly decreasing row-length from top to bottom. We can think of a Young diagram $\lambda$  as a partition $\lambda=(\lambda_1,\ldots, \lambda_l)$, where $\lambda_i$ is the number of boxes in the $i$-th row from top. Noncrossing matchings of size $n$ are in bijection to Young diagrams for which the $i$-th row from top has at most $n-i$ boxes for $1 \leq i \leq n$. For a noncrossing matching $\pi$ its corresponding Young diagram $\lambda(\pi)$ is given by the area enclosed between two paths with same start- and endpoint. The first path consists of $n$ consecutive north-steps followed by $n$ consecutive east-steps. We construct the second path by reading the numbers from left to right and drawing a north-step if the number labels a left-endpoint of an arch and an east-step otherwise. An example of a noncrossing matching and its corresponding Young diagram is given in Figure \ref{fig: nc matching 1}. For a given noncrossing matching $\pi$ and a positive integer $k$ the Young diagrams $\lambda(\pi)$ and $\lambda((\pi)_k)$ are the same. To be able to distinguish between them we will always draw the first path of the above algorithm in the pictures of $\lambda(\pi)$.\\

We define a partial order on the set $\NC_n$ of noncrossing matchings via $\sigma < \pi$ iff the Young diagram $\lambda(\sigma)$ is contained in the Young diagram $\lambda(\pi)$. For $2 \leq j \leq 2n-2$ we write $\sigma \nearrow_j \pi$ if $\lambda(\pi)$ is obtained by adding a box to $\lambda(\sigma)$ on the $j$-th diagonal, where the diagonals are labelled as in Figure \ref{fig: refined order}. This labelling of the diagonals is the second reason for drawing the consecutive north and east steps in the pictures of the Young diagrams.

\begin{figure}
 \centering
 \includegraphics[width=0.75\textwidth]{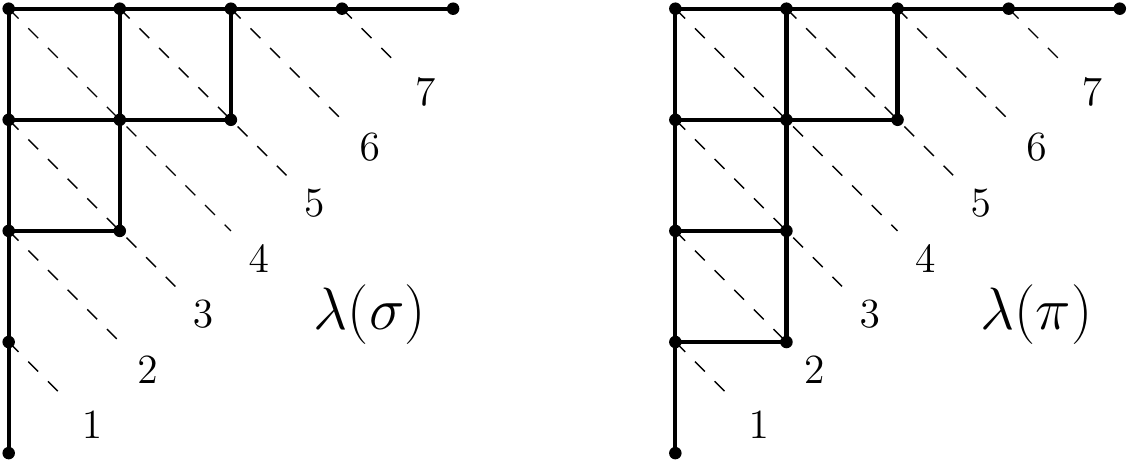}
 \caption{\label{fig: refined order} The matchings $\sigma,\pi$ satisfy $\sigma \nearrow_2 \pi$.}
\end{figure}

\subsection{The Temperley-Lieb Operators}
\label{sec: temp-lieb}
We define first the \emph{rotation} $\rho: \NC_n \rightarrow \NC_n$. Two numbers $i$ and $j$ are connected in $\rho(\pi)$ for $\pi \in \NC_n$ iff $i-1$ and $j-1$ are connected in $\pi$, where we identify $2n+1$ with $1$. The \emph{Temperley-Lieb operator} $e_j$ for $1 \leq j \leq 2n$ is a map from noncrossing matchings of size $n$ to themselves. For a given $\pi \in \NC_n$ the noncrossing matching $e_j(\pi)$ is obtained by deleting the arches which are incident to the points $j,j+1$ and adding an arch between $j,j+1$ and an arch between the points former connected to $j$ and $j+1$. Thereby we identify $2n+1$ with $1$.
There exists also a graphical representation of the Temperley-Lieb operators. Applying $e_j$ on a noncrossing matching $\pi$ is done by attaching the diagram of $e_j$,  depicted in Figure \ref{fig: graphical temperley},  at the bottom of the diagram of $\pi$ and simplifying the paths to arches. An example for this is given in Figure \ref{fig: graphical temperley calculation}.\\

\begin{figure}
 \centering
 \includegraphics[width=0.6\textwidth]{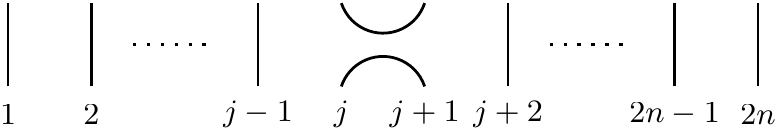}
 \caption{\label{fig: graphical temperley} The graphical representation of $e_j$}
\end{figure}

\begin{figure}
 \centering
 \includegraphics[width=\textwidth]{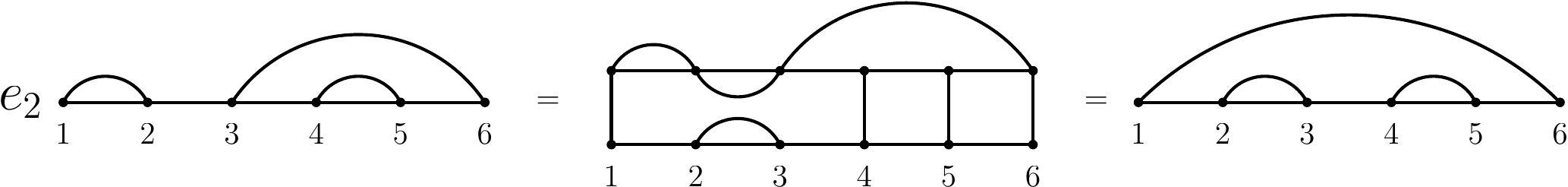}
 \caption{\label{fig: graphical temperley calculation} Calculating $e_j(\pi)$ graphically with $\pi$ from the previous example.}
\end{figure}

Since noncrossing matchings of size $n$ are in bijection with Young diagrams whose $i$-th row from the top has at most $n-i$ boxes, we can define $e_j$ also for such Young diagrams via $e_j(\lambda(\pi)):=\lambda(e_j(\pi))$. For $1 \leq j \leq 2n-1$ the action of $e_j$ on Young diagrams is depicted in Figure  \ref{fig: temperley for Young1}. The operator $e_{2n}$ maps a Young diagram to itself iff the $i$-th row has less than $n-i$ boxes for all $1 \leq i \leq n-1$. Otherwise the Young diagram corresponds to a noncrossing matching of the form $(\alpha)\beta(\gamma)$, where $\alpha,\beta,\gamma$ are noncrossing matchings of smaller size. In this case $e_{2n}$ maps this Young diagram to the one corresponding to the noncrossing matching $(\alpha(\beta)\gamma)$, as depicted in Figure \ref{fig: temperley for Young2}. The next lemma is an easy consequence of the above observations.

\begin{figure}
 \begin{minipage}{0.5\textwidth}
  \centering
  \includegraphics[width=0.9\textwidth]{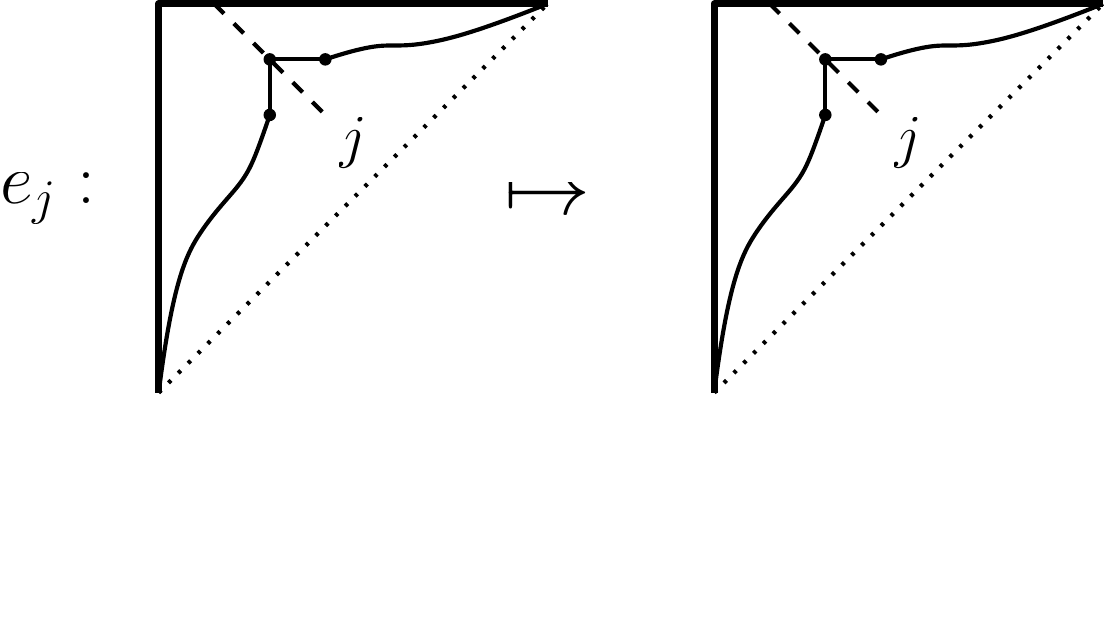}
  \includegraphics[width=0.9\textwidth]{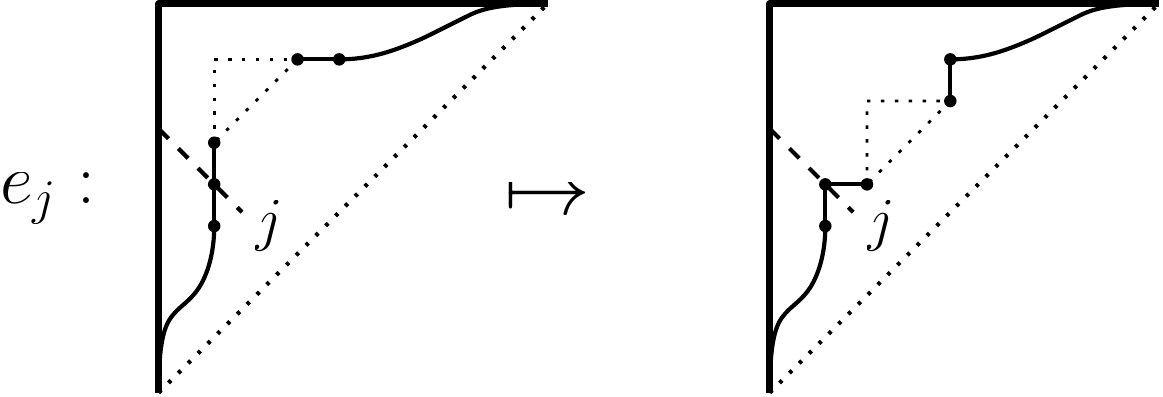}
 \end{minipage}%
 \begin{minipage}{0.5\textwidth}
  \centering
  \includegraphics[width=0.9\textwidth]{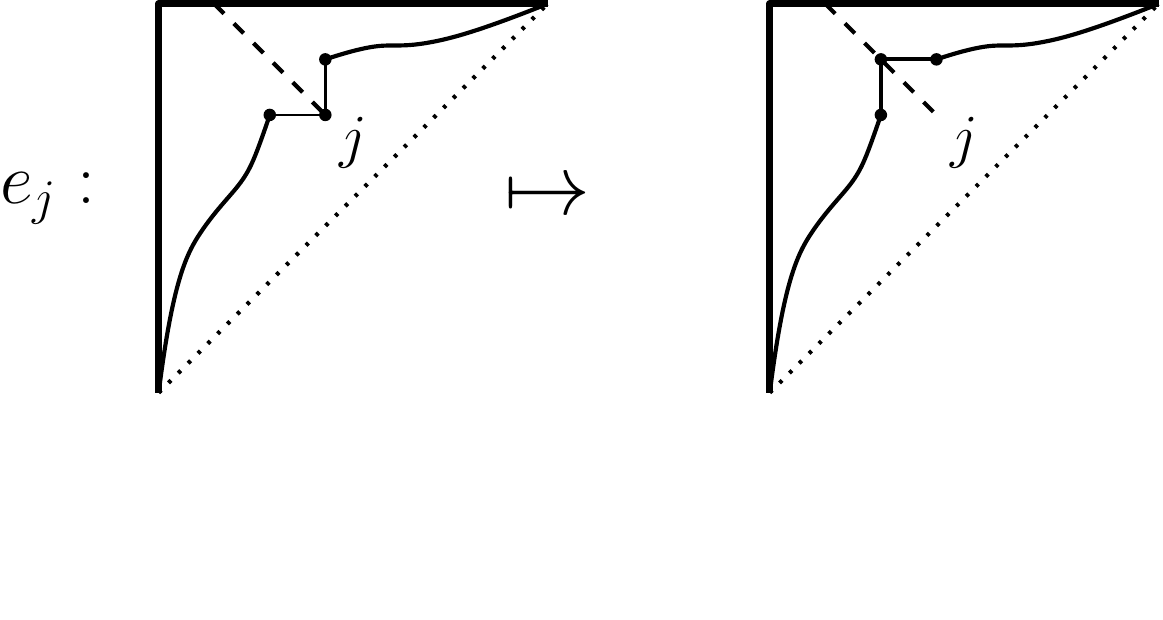}
  \includegraphics[width=0.9\textwidth]{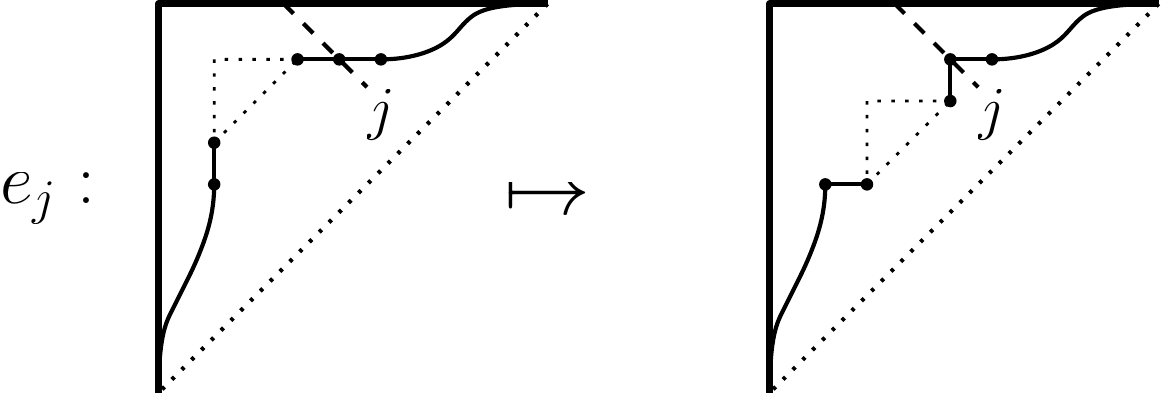}
 \end{minipage}
 \caption{\label{fig: temperley for Young1} The action of $e_j$ for $1 \leq j \leq 2n-1$ on Young diagrams corresponding to noncrossing matchings of size $n$.}
\end{figure}

\begin{figure}
 \centering
 \includegraphics[width=0.6\textwidth]{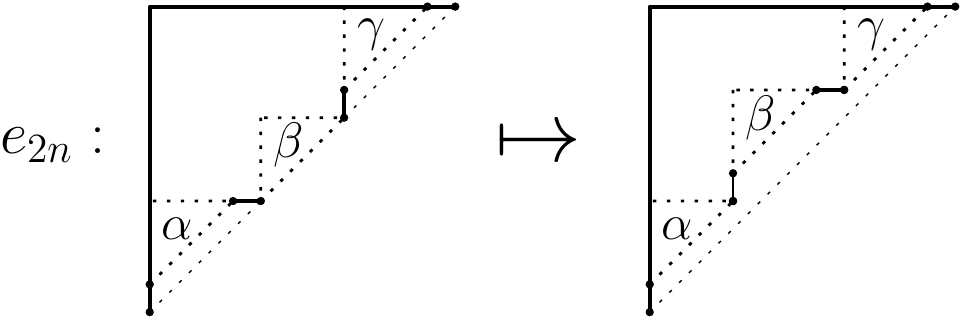}
 \caption{\label{fig: temperley for Young2} The action of $e_{2n}$ on Young diagrams corresponding to noncrossing matchings of size $n$ of the form $(\alpha)\beta(\gamma)$, where $\alpha,\beta,\gamma$ are noncrossing matchings.}
\end{figure}

\begin{lem}
 \label{lem: temperley lieb operator for Young}
 \begin{enumerate}
 \item  For a noncrossing matching $\pi$ of size $n$ and $2 \leq j \leq 2n-2$, the preimage $e_j^{-1}(\pi)$ is a subset of $\{\sigma |  \pi \nearrow_j \sigma\} \cup \{\sigma | \sigma \leq \pi\}$.
 \item  Let $\alpha \in \NC_n$, $\beta, \gamma \in \NC_{n^\prime}$ be noncrossing matchings such that there exists $2 \leq i \leq 2n^\prime-2$ with $\beta \nearrow_i \gamma$. Then the preimage $ e_{2n+i}^{-1} (\alpha\beta)$ is given by
 \[
  e_{2n+i}^{-1} (\alpha\beta)=\{\alpha\sigma|\sigma \in e_i^{-1}(\beta)\}.
 \]
\end{enumerate}
\end{lem}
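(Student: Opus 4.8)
Both parts come down to reading off the local effect of the Temperley--Lieb operators on boundary paths of Young diagrams, as recorded in Figures~\ref{fig: temperley for Young1} and~\ref{fig: temperley for Young2}. For part (1), note first that $e_j(\sigma)$ always has $j\sim j+1$, so if $j\not\sim j+1$ in $\pi$ then $e_j^{-1}(\pi)=\emptyset$ and there is nothing to prove; assume therefore $j\sim j+1$ in $\pi$. Given $\sigma$ with $e_j(\sigma)=\pi$, I would distinguish four cases according to whether each of $j$ and $j+1$ is a left- or a right-endpoint of an arch in $\sigma$, equivalently according to the two boundary steps of $\lambda(\sigma)$ at positions $j,j+1$. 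If they are north--east, then $j\sim j+1$ already in $\sigma$, so $e_j$ fixes it and $\sigma=\pi\le\pi$. If they are east--north, then $\lambda(\sigma)$ has a (removable) box on the $j$-th diagonal which $e_j$ deletes, so $\lambda(\pi)$ equals $\lambda(\sigma)$ minus that box, i.e.\ $\pi\nearrow_j\sigma$. In the remaining two cases (east--east and north--north) the noncrossing condition forces the two arches at $j$ and $j+1$ to be nested in a definite way, and applying $e_j$ changes two boundary steps so that the path of $\pi$ runs weakly to the lower-right of that of $\sigma$; hence $\lambda(\sigma)\subseteq\lambda(\pi)$ and $\sigma\le\pi$. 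In all cases $\sigma\in\{\sigma\mid\pi\nearrow_j\sigma\}\cup\{\sigma\mid\sigma\le\pi\}$, which is the assertion. This part is really just bookkeeping against the four pictures in Figure~\ref{fig: temperley for Young1}.

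For part (2), the key point is that $2\le i\le 2n^\prime-2$ places both $2n+i$ and $2n+i+1$ strictly inside the block of the last $2n^\prime$ points, so that on a concatenation $\alpha\tau$ (which by definition has no arch joining the two blocks) the operator $e_{2n+i}$ only rearranges arches within that second block, and does so exactly as $e_i$ does on $\tau$. Thus $e_{2n+i}(\alpha\tau)=\alpha\,e_i(\tau)$ for every $\tau\in\NC_{n^\prime}$, which immediately gives the inclusion $\{\alpha\sigma\mid\sigma\in e_i^{-1}(\beta)\}\subseteq e_{2n+i}^{-1}(\alpha\beta)$. For the reverse inclusion I would show that any $\rho\in e_{2n+i}^{-1}(\alpha\beta)$ is again a concatenation $\alpha^\prime\tau^\prime$ of $\alpha^\prime\in\NC_n$ and $\tau^\prime\in\NC_{n^\prime}$; once this is known, $\alpha\beta=e_{2n+i}(\rho)=\alpha^\prime\,e_i(\tau^\prime)$ forces $\alpha^\prime=\alpha$ and $e_i(\tau^\prime)=\beta$, so $\rho=\alpha\tau^\prime$ with $\tau^\prime\in e_i^{-1}(\beta)$, as required.

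The crux, and the step I expect to be most delicate, is showing that such a $\rho$ cannot have an arch crossing the seam between the two blocks. An arch of $\rho$ crossing the seam and not incident to $2n+i$ or $2n+i+1$ would survive into $e_{2n+i}(\rho)=\alpha\beta$, which has none; so a crossing arch of $\rho$ would have to be incident to $2n+i$ or $2n+i+1$, and then (again using that $e_{2n+i}(\rho)$ has no crossing arch, together with noncrossingness of $\rho$) both of $2n+i$ and $2n+i+1$ would be matched across the seam in a nested pattern whose enclosed points are matched among themselves. Tracing this back, $\beta$ would have the points $1,\dots,i-1$ matched among themselves and $i\sim i+1$; but for such a $\beta$ adding a box on the $i$-th diagonal breaks the ballot (Dyck-path) condition, so no $\gamma$ with $\beta\nearrow_i\gamma$ exists, contradicting the hypothesis. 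Hence $\rho$ has no crossing arch and is a concatenation. This is the one place where the hypothesis $\beta\nearrow_i\gamma$ is genuinely needed: for $\beta$ with a self-contained left part and $i\sim i+1$ the set $e_{2n+i}^{-1}(\alpha\beta)$ really does contain non-concatenated matchings, so the argument must extract the decomposable structure of $\beta$ from the existence of such a $\rho$ and then observe that this structure is incompatible with a box being addable on the $i$-th diagonal.
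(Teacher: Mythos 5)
Your proposal is correct and takes essentially the same route as the paper: part (1) is the same case-by-case reading of the local effect of $e_j$ on the boundary path of $\lambda(\sigma)$, and part (2) rests on the same two observations the paper uses, namely that $e_{2n+i}$ changes only the arches incident to $2n+i$, $2n+i+1$ and their partners, and that $\beta\nearrow_i\gamma$ is incompatible with the points $1,\dots,i-1$ of $\beta$ being matched among themselves with $i\sim i+1$. The only difference is presentational: the paper argues directly (the hypothesis yields an arch of $\beta$ surrounding $i$ and $i+1$, forcing both partners into the second block), whereas you run the same fact contrapositively via a hypothetical seam-crossing arch.
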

\begin{proof}
\begin{enumerate}
 \item If $\pi$ has no arch between $j$ and $j+1$, then $e_j^{-1}(\pi)=\emptyset$. Figure \ref{fig: temperley for Young2} displays the action of $e_j$ on Young diagrams and implies the statement if $\pi$ has an arch between $j$ and $j+1$.
 \item Let $\sigma \in e_{2n+i}^{-1}(\alpha\beta)$ and denote by $x,y$ the labels which are connected in $\sigma$ to $2n+i$ or $2n+i+1$ respectively. By definition of $e_{2n+i}$ the noncrossing matchings $\alpha\beta$ and $\sigma$ differ only in the arches between $2n+i,2n+i+1,x,y$. The existence of an $\gamma$ with $\beta \nearrow_i \gamma$ means there exists an arch in $\beta$ with left-endpoint before $i$ and right-endpoint after $i$, hence surrounding $2n+i$ and $2n+i+1$. Therefore $x$ and $y$ must be surrounded by this arch or they are the labels of the points connected by this arch. In both cases $x,y \geq 2n$ which implies $\sigma$ can be written as $\alpha \sigma^\prime$ with $e_i(\sigma^\prime)=\beta$. \qedhere
\end{enumerate}
\end{proof}

The \emph{Temperley-Lieb algebra} with parameter $\tau=-(q+q^{-1})$ of size $2n$ is generated by the Temperley-Lieb operators $e_i$ with $1 \leq i \leq 2n$ over $\mathbb{C}$. The elements $e_i,e_j$ satisfy for all $1 \leq i,j \leq 2n$ the following relations
\begin{align*}
 e_i^2&=\tau e_i,\\
 e_ie_j&=e_je_i \qquad \textit{if } 2 \leq |(i-j)| \leq 2n-2,\\
 e_ie_{i \pm 1}e_i&=e_i.
\end{align*}
Throughout this paper we interpret $e_i v$ for some vector $v \in \{f | f:\NC_n \rightarrow V \}$ and a vector space $V$ always as the action of an element of the Temperley-Lieb algebra on the vector $v$, where the Temperley-Lieb operators act as permutations, i.\,e., $e_i((v_\pi)_{\pi \in \NC_n})=(v_{e_i(\pi)})_{\pi \in \NC_n}$.

\subsection{Fully packed loop configurations}

A \emph{fully packed loop configuration} (or short \emph{FPL}) $F$ of size $n$ is a subgraph of the $n \times n$ grid with $n$ external edges on every side with the following two properties.
\begin{enumerate}
 \item All vertices of the $n\times n$ grid have degree $2$ in $F$.
 \item $F$ contains every other external edge, beginning with the topmost at the left side.
 \end{enumerate}

 An FPL consists of pairwise disjoint paths and loops. Every path connects two external edges. We number the external edges in an FPL counter-clockwise with $1$ up to $2n$, see Figure \ref{fig: FPL}. This allows us to assign to every FPL $F$ a noncrossing matching $\pi(F)$, where $i$ and $j$ are connected by an arch in $\pi(F)$ if they are connected in $F$. We call $\pi(F)$ the \emph{link pattern} of $F$ and write $A_\pi$ for the number of FPLs $F$ with link pattern $\pi(F)=\pi$.\\

\begin{figure}
 \centering
 \includegraphics[width=0.75\textwidth]{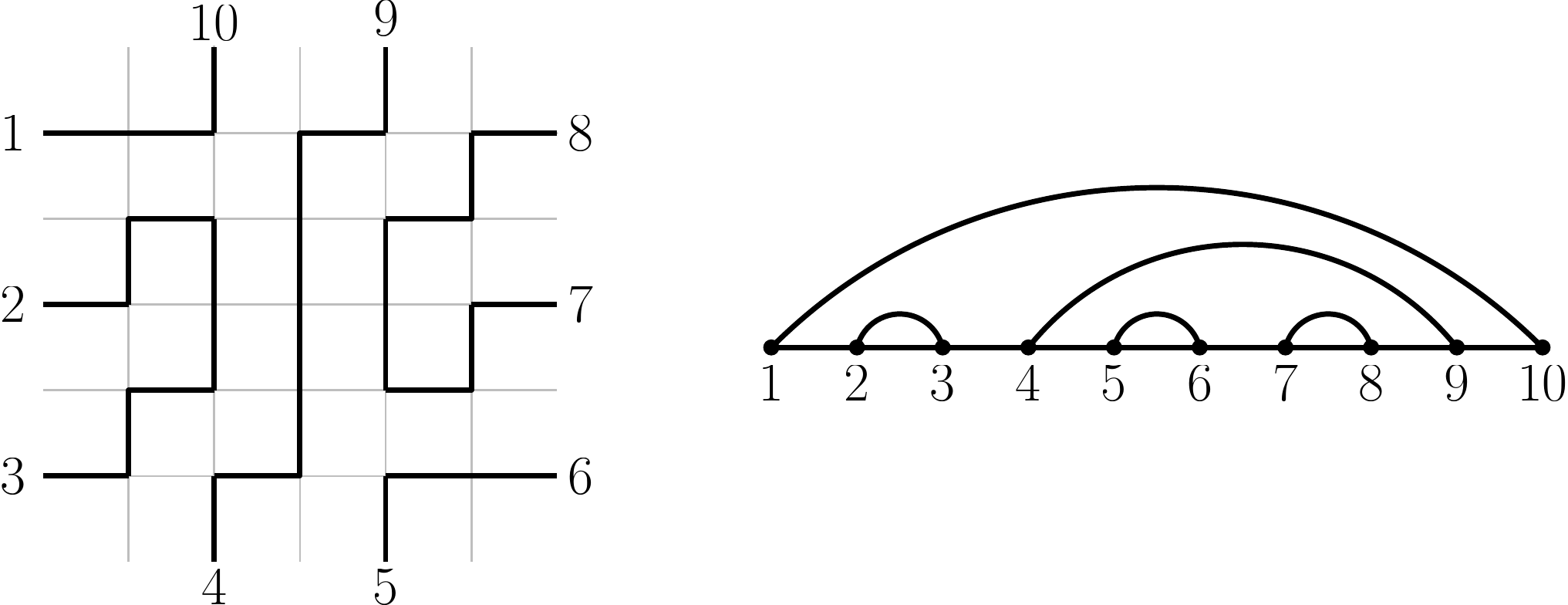}
 \caption{\label{fig: FPL} An example of an FPL of size $5$ and its link pattern.}
\end{figure}

It is well known that FPLs and alternating sign matrices (ASMs) are in bijection. The number of FPLs of size $n$, denoted by $\ASM(n)$, is hence given by the ASM-Theorem \cite{ASMs,Zeilberger}
 \[
\ASM(n)=\prod_{i=0}^{n-1} \frac{(3i+1)!}{(n+i)!}.
\]

%


\subsection{The (in-)homogeneous $O(\tau)$ loop model}
A configuration of the \emph{inhomogeneous $O(\tau)$ loop model} of size $n$ is a tiling of $[0,2n]\times [0,\infty)$ with plaquettes of side length $1$ depicted in Figure \ref{fig: plaquettes}. To obtain a cylinder we identify the half-lines $\{(0,t),t\geq 0\}$ and $\{(2n,t), t\geq 0\}$. In the following we assume that the cylindrical loop percolations are filled randomly with the two plaquettes, where the probability to place the first plaquette of Figure \ref{fig: plaquettes} in column $i$ is $p_i$ with $0 < p_i < 1$ for all $1 \leq i \leq 2n$.
If the probability does not depend on the column, i.\,e., $p_1= \ldots =p_{2n}$, we call it the \emph{homogeneous $O(\tau)$ loop model}. We parametrise the probabilities $p_i=\frac{q z_i-q^{-1}t}{q t-q^{-1}z_i}$ and $\tau=q +q^{-1}$. The two plaquettes in Figure \ref{fig: plaquettes} are interpreted to consist of two paths. By concatenating the paths of a plaquette with the paths of the neighbouring plaquettes, we see that a cylindrical loop percolation consists of noncrossing paths.

\begin{figure}
 \centering
 \includegraphics[width=0.25\textwidth]{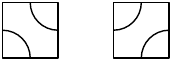}
 \caption{\label{fig: plaquettes} The two different plaquettes.}
\end{figure}

\begin{figure}
 \centering
 \includegraphics[width=0.5\textwidth]{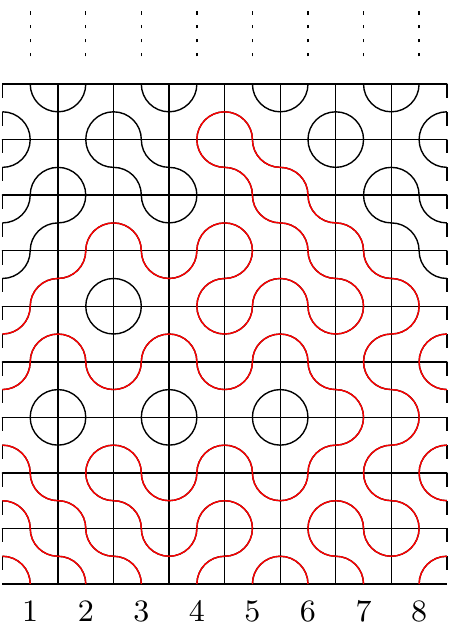}
 \caption{\label{fig: cylindrical loop percolation} The beginning of a cylindrical loop percolation, where the paths starting and ending at the bottom are drawn in red.}
\end{figure}

\begin{lem}
\label{lem: paths are finite}
With probability $1$ all paths in a random cylindrical loop percolation are finite.
\end{lem}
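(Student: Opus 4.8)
The plan is to exhibit, with probability bounded below uniformly in the row index, a single row of plaquettes that acts as a complete barrier: any path entering it from below turns around and leaves from below, and symmetrically from above. I would first single out the row configuration $\beta$ in which the two plaquette types alternate across the $2n$ columns of the cylinder, odd columns getting one type and even columns the other. Tracing the two arcs of each plaquette and following them across the shared vertical edges of consecutive plaquettes of the row, one checks that in $\beta$ the $2n$ points at which paths cross the lower boundary $y=k$ of that row are joined to one another \emph{inside the row} in the nested pattern $\{1,2\},\{3,4\},\dots,\{2n-1,2n\}$, while the $2n$ points on the upper boundary $y=k+1$ are joined inside the row in the rotated pattern $\{2,3\},\{4,5\},\dots,\{2n,1\}$. (It matters here that the number of columns $2n$ is even, so that the alternating choice closes up consistently around the cylinder.) In particular no path-segment inside such a row connects the lower boundary to the upper boundary, so no path of the whole percolation can cross it.

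Next comes a Borel--Cantelli argument. Let $B_k$ be the event that the plaquettes occupying $[0,2n]\times[k,k+1]$ are in the configuration $\beta$. Since the plaquette in column $i$ is chosen independently of all others with the first type having probability $p_i\in(0,1)$, we get $\Pr(B_k)\geq\delta:=\prod_{i=1}^{2n}\min(p_i,1-p_i)>0$, and the events $B_0,B_1,B_2,\dots$ are mutually independent because they involve pairwise disjoint sets of plaquettes. By the second Borel--Cantelli lemma, almost surely $B_k$ holds for infinitely many $k$.

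On this almost-sure event I would conclude as follows. A path is contained in the union of the arcs of the plaquettes it visits; each plaquette is met by exactly two arcs, each of bounded length, and the region $[0,2n]\times[0,h]$ contains only finitely many plaquettes, so a path of infinite length cannot be contained in any such region and is therefore unbounded in the $y$-coordinate. Suppose for contradiction that some path $\gamma$ is infinite. It passes through a point at some finite height $y_0$, and being connected and unbounded above it meets every height $\geq y_0$; pick, among the infinitely many indices $k$ with $B_k$ holding, one with $k\geq y_0+2$. Then $\gamma$ has a point strictly below height $k$ and a point strictly above height $k+1$. Since a path crosses a horizontal line $y=m$, $m\in\N$, only at midpoints of plaquette edges, $\gamma$ must enter the row $[k,k+1]$ through such a midpoint on its lower boundary and later leave it through a midpoint on its upper boundary; but in configuration $\beta$ the arcs of the row return every lower-boundary crossing point to a lower-boundary crossing point, so such a passage is impossible. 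Hence no infinite path exists, which is the assertion of the lemma.

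The part I expect to require the most care is not probabilistic but the combinatorial/topological verification that the alternating row $\beta$ really decouples the two sides of the cylinder: one must set up the bookkeeping of the four edge-midpoints of each plaquette, the identifications of the edges shared by horizontally adjacent plaquettes (including the wrap-around identifying columns $1$ and $2n$), and justify that a path meets a horizontal integer line only at those midpoints, so that "turning back inside the row" genuinely forbids every crossing. Granting that lemma, the positivity of $\Pr(B_k)$, the independence, the Borel--Cantelli step, and the length-versus-height observation are all routine.
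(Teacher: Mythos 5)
Your argument is correct, and it is worth noting that the paper itself does not actually prove this lemma: it only cites Romik's Lemma~1.6 for the homogeneous case ($p_1=\dots=p_{2n}$) and asserts that the inhomogeneous case is analogous. What you supply is a complete, self-contained proof that works directly in the inhomogeneous setting, since it uses nothing beyond $0<p_i<1$ and the independence of the plaquette choices; the blocking-row plus second Borel--Cantelli scheme is exactly the kind of argument the cited source relies on, so in spirit you are reconstructing the omitted ``analogous'' proof rather than diverging from it. The combinatorial core checks out: with the two plaquette types pairing the edge midpoints as $\{l,t\},\{b,r\}$ and $\{l,b\},\{t,r\}$, an alternating row around the cylinder (possible because the number of columns $2n$ is even) matches bottom midpoints with bottom midpoints ($\{1,2\},\{3,4\},\dots$) and top midpoints with top midpoints (the rotated pattern), so no arc chain inside that row joins $y=k$ to $y=k+1$; since arcs meet the horizontal lines only at edge midpoints, any excursion of a path through the closed strip is a concatenation of full arcs of that row and hence returns to the side it entered from. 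Your remaining steps are also sound: $\Pr(B_k)\ge\prod_i\min(p_i,1-p_i)>0$ uniformly in $k$, the $B_k$ are independent because they involve disjoint plaquettes, so almost surely infinitely many rows are blocking; an infinite component uses infinitely many arcs, hence infinitely many plaquettes, hence is unbounded in the $y$-direction, and must therefore cross some blocking row above its lowest point, a contradiction. One small presentational point: the conclusion is a pointwise (deterministic) statement on the almost-sure event $\{B_k \text{ i.o.}\}$, which is why the dependence of $y_0$ on the random path $\gamma$ causes no conditioning problem -- you implicitly use this, and it would be worth saying explicitly.
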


A proof for the homogeneous case can be found in \cite[Lemma 1.6]{Romik}, the inhomogeneous case can be proven analogously. For a configuration $C$ of the $O(\tau)$ loop model, we label the points $(i-\frac{1}{2},0)$ with $i$ for $1 \leq i \leq 2n$. We define the \emph{connectivity pattern} $\pi(C)$ as the noncrossing matching connecting $i$ and $j$ by an arch iff $i$ and $j$ are connected by paths in $C$. By the above lemma $\pi(C)$ is well defined for almost all cylindrical loop percolations $C$. For $\pi \in \NC_n$ denote by $\hat\Psi_\pi(t;z_1, \ldots,z_{2n})$ the probability that a configuration $C$ has the connectivity pattern $\pi$ and write $\hat\Psi_n(t;z_1, \ldots,z_{2n})=(\hat\Psi_\pi(t;z_1, \ldots,z_{2n}))_{\pi \in \NC_n}$.\\

\begin{figure}
 \centering
 \includegraphics[width=0.75\textwidth]{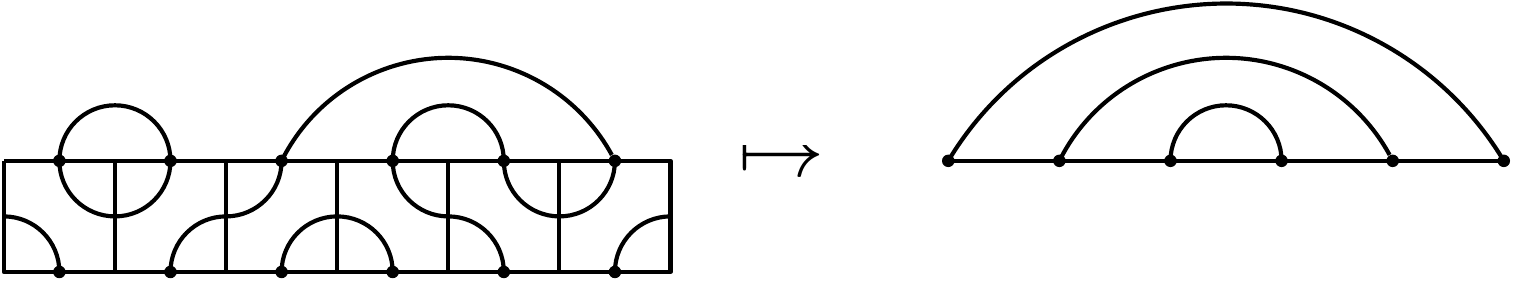}
 \caption{\label{fig: nc matching Markov} An example for a state transition starting with the noncrossing matching $\pi$ of Figure \ref{fig: nc matching 1}. The transition probability is $p_1 (1-p_2)(1-p_3)p_4p_5(1-p_6)$.}
\end{figure}

We define a Markov chain on the set $\NC_n$ of noncrossing matchings of size $n$. The transitions are given by putting $2n$ plaquettes below a noncrossing matching and simplify the paths to obtain a new noncrossing matching. An example is given in Figure \ref{fig: nc matching Markov}. The probability of one transition is given by the product of the probabilities of placing the plaquettes, where placing the first plaquette of Figure \ref{fig: plaquettes} at the $i$-th position is $p_i$ as before. We denote by  $T_n(t;z_1,\ldots, z_{2n})$ the transition matrix of this Markov chain. By the Perron-Frobenius Theorem the matrix $T_n(t;z_1,\ldots, z_{2n})$ has $1$ as an eigenvalue and the stationary distribution of the Markov chain is up to scaling the unique eigenvector with associated eigenvalue $1$. Every configuration $C$ of the inhomogeneous $O(\tau)$ loop model can be obtained uniquely by pushing all the plaquettes of a configuration $C^\prime$ one row up and filling the empty bottom row with plaquettes. Therefore the vector $\hat\Psi_n(t;z_1, \ldots,z_{2n})$ is the stationary distribution of this Markov chain and hence satisfies

\begin{equation}
 \label{eq: inhomogeneous Markov formula}
 T_n(t;z_1,\ldots, z_{2n})\hat\Psi_n(t;z_1, \ldots,z_{2n})=\hat\Psi_n(t;z_1, \ldots,z_{2n}).
\end{equation}

We define the Hamiltonian as the linear map $\mathcal{H}_n:=\sum_{j=1}^{2n}e_j$, where $e_j$ is interpreted as an element of the Temperley-Lieb algebra.

\begin{thm}
 \label{thm: Hamiltonian}
The stationary distribution $\hat\Psi_n(t)=\hat\Psi_n(t;1, \ldots,1)$ satisfies for $\tau=1$
 \begin{align}
  \label{eq: hamiltonian}
  \mathcal{H}_n(\hat\Psi_n(t))=2n \hat\Psi_n(t).
 \end{align}
Further $\hat\Psi_n(t)$ is independent of $t$ and uniquely determined by \eqref{eq: hamiltonian}.
\end{thm}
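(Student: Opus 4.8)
The plan is to read everything off the fixed-point equation \eqref{eq: inhomogeneous Markov formula} specialised to $z_1=\dots=z_{2n}=1$, using the expansion of the transfer matrix in the Temperley--Lieb algebra. Setting all $z_i=1$ makes every column carry the first plaquette of Figure~\ref{fig: plaquettes} with the same probability $p=p(t)=\frac{q-q^{-1}t}{qt-q^{-1}}$, so
\[
 T_n(t;1,\dots,1)=T_n(p)=\sum_{S\subseteq\{1,\dots,2n\}} p^{\,2n-|S|}(1-p)^{|S|}\,M_S ,
\]
where $M_S$ is the (affine) Temperley--Lieb element obtained by placing the second plaquette in exactly the columns indexed by $S$ and the first plaquette everywhere else. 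Thus $T_n(p)$ is a polynomial of degree $2n$ in $p$ with coefficients in the Temperley--Lieb algebra; the element $M_\emptyset$ is a power of the rotation $\rho$, and for $|S|=1$ the element $M_{\{j\}}$ is obtained from $M_\emptyset$ by inserting a single ``defect'' plaquette and creates no closed loop. Recall from \eqref{eq: inhomogeneous Markov formula} and Perron--Frobenius that $\hat\Psi_n(t;1,\dots,1)$ is the unique positive $1$-eigenvector of $T_n(p(t))$.

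First I would show that $\hat\Psi_n(t;1,\dots,1)$ is, up to normalisation, independent of $t$. By the standard Yang--Baxter argument the matrices $T_n(t;1,\dots,1)$ for different $t$ commute, hence share a common eigenbasis; since each has a one-dimensional $1$-eigenspace spanned by the positive vector $\hat\Psi_n(t;1,\dots,1)$, and this line depends continuously on $t$ while lying in a finite set of common eigenlines, it is constant. (Alternatively one may invoke the $t$-free normalisation of the components of the qKZ solution due to Di~Francesco and Zinn-Justin.) Call the resulting vector $\hat\Psi_n(t)$. Since $p(t)$ runs through an interval as $t$ ranges over the admissible parameters, the polynomial identity $T_n(p)\hat\Psi_n(t)=\hat\Psi_n(t)$ then holds for all $p\in\mathbb{C}$.

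Next I would extract \eqref{eq: hamiltonian}. Differentiating $T_n(p)\hat\Psi_n(t)=\hat\Psi_n(t)$ at the shift point $p=1$, only $M_\emptyset$ and the $M_{\{j\}}$ survive (each $M_S$ with $|S|\ge 2$ carries a factor $(1-p)^{|S|}$ with $|S|\ge 2$), and the product rule gives $T_n'(1)=2n\,M_\emptyset-\sum_{j=1}^{2n}M_{\{j\}}$. Tracking the connectivities through a row with one non-trivial plaquette shows that $M_{\{j\}}$ equals $M_\emptyset$ composed with a single Temperley--Lieb generator $e_{\sigma(j)}$, for a cyclic relabelling $\sigma$ of $\{1,\dots,2n\}$; hence $\sum_j M_{\{j\}}=M_\emptyset\,\mathcal H_n$ and $T_n'(1)=M_\emptyset(2n\,\Id-\mathcal H_n)$. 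Therefore $M_\emptyset(2n\,\Id-\mathcal H_n)\hat\Psi_n(t)=0$, and since $M_\emptyset$ is an invertible power of $\rho$ this is exactly $\mathcal H_n(\hat\Psi_n(t))=2n\,\hat\Psi_n(t)$. (The hypothesis $\tau=1$ is what makes the loop percolation an honest probability model and $\hat\Psi_n(t)$ a stationary distribution in the first place; the identity for $T_n'(1)$ itself does not see $\tau$.)

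Finally, for the uniqueness statement: each $e_j$ sends a noncrossing matching to a single noncrossing matching, so $\tfrac1{2n}\mathcal H_n$ is a stochastic matrix; the associated directed graph on $\NC_n$ is strongly connected (any two matchings are joined by a sequence of $e_j$'s) and has loops (namely $e_j(\pi)=\pi$ whenever $\pi$ contains the arch $\{j,j+1\}$), hence irreducible and aperiodic. By Perron--Frobenius its eigenvalue $1$, i.e.\ the eigenvalue $2n$ of $\mathcal H_n$, is simple, which gives uniqueness up to scaling, and $\hat\Psi_n(t)$ is the corresponding (positive) Perron eigenvector. I expect the two delicate points to be the clean justification of the $t$-independence, for which the commuting-transfer-matrix property (not among the results stated above) must be brought in, and the combinatorial identification $M_{\{j\}}=M_\emptyset\,e_{\sigma(j)}$ obtained by following the single non-trivial plaquette around the row.
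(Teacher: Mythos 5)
The paper never proves this theorem internally --- it is quoted with a pointer to Romik \cite[Appendix B]{Romik} --- so your argument is not "the paper's route done differently" but an actual proof sketch of a cited result, and in outline it is correct: the expansion $T_n(p)=\sum_S p^{2n-|S|}(1-p)^{|S|}M_S$, the identifications $M_\emptyset=\rho^{\pm1}$ and $M_{\{j\}}=M_\emptyset e_{\sigma(j)}$ (a one-defect row is a cap--cup at shifted positions), the consequence $T_n'(1)=M_\emptyset\bigl(2n\,\Id-\mathcal H_n\bigr)$, and the Perron--Frobenius treatment of $\tfrac1{2n}\mathcal H_n$ are all sound, and differentiating the polynomial identity $T_n(p)\hat\Psi_n=\hat\Psi_n$ at the shift point $p=1$ is exactly the standard way to extract the Hamiltonian relation. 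A few points should be tightened. First, for the $t$-independence: commuting matrices need not share an eigenbasis, and the continuity-of-eigenlines argument is unnecessary; say instead that $T_n(t')$ preserves the one-dimensional $1$-eigenspace of $T_n(t)$, and since $T_n(t')$ is stochastic it fixes the normalised positive vector spanning it, so uniqueness gives $\hat\Psi_n(t)=\hat\Psi_n(t')$. The commutation itself (Yang--Baxter for the Temperley--Lieb $\check R$-matrix, see \cite{Around_the_RS_conj,Zinn-Justin-Habil}) is a genuine external input that you correctly flag; importing it is no less rigorous than the paper's own citation of \cite{Romik}. Second, in verifying $\sum_j M_{\{j\}}=M_\emptyset\,\mathcal H_n$ the case where the incoming matching already contains the arch closed by the defect produces a closed loop that is erased with weight $1$; this is exactly where $\tau=1$ enters, so the aside that the identity for $T_n'(1)$ "does not see $\tau$" is misleading, though harmless. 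Third, irreducibility of $\tfrac1{2n}\mathcal H_n$ is asserted in passing; it is true (the link-pattern set is connected under the $e_j$-moves: apply $e_1,e_3,\dots,e_{2n-1}$ to reach the all-adjacent-arch pattern, and any pattern is reachable from $()_n$ by successive box additions $e_j(\sigma)=\pi$ for $\sigma\nearrow_j\pi$; one short extra step closes the cycle), but it deserves the two lines of proof, just as your self-loop argument for aperiodicity is given. Finally, note that your Markov-chain reading makes the generators act by pushforward on distributions, i.e.\ $(e_j v)_\pi=\sum_{\pi':e_j(\pi')=\pi}v_{\pi'}$; this is the action for which the theorem (and the Razumov--Stroganov identification) actually holds, and it is the transpose of the formula $(e_iv)_\pi=v_{e_i(\pi)}$ written in Section 2.2, so your convention, not the paper's literal one, is the right one to keep.
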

A proof of this theorem can be found for example in \cite[Appendix B]{Romik}, however note that the matrix $H_n$ defined there is given by $2n \cdot \Id-\mathcal{H}_n$.\\

\noindent The following theorem was conjectured by Razumov and Stroganov in \cite{Razumov-Stroganov-Conj} and later proven by Cantini and Sportiello in \cite{Razumov-Stroganov-Proof}. It creates a connection between fully packed loop configurations and the stationary distribution of the homogeneous $O(1)$ loop model.

\begin{thm}[Razumov-Stroganov-Cantini-Sportiello Theorem]
\label{thm: Razumov-Stroganov}
 Let $n\in \N$, set $q=e^{\frac{2\pi i}{3}}$  and $\hat\Psi_\pi=\hat\Psi_\pi(-q;1,\ldots,1)$. For all $\pi \in \NC_n$ holds 
 \[
  \hat\Psi_\pi=\frac{A_\pi}{\ASM(n)}.
 \]
\end{thm}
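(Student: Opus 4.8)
The plan is to show that the refined FPL vector $(A_\pi)_{\pi\in\NC_n}$ is proportional to the ground state of the Hamiltonian, and then to identify the proportionality constant. Since $q=e^{2\pi i/3}$ gives $q+q^{-1}=-1$, hence $\tau=-(q+q^{-1})=1$, Theorem~\ref{thm: Hamiltonian} applies and tells us that the eigenspace of $\mathcal{H}_n=\sum_{j=1}^{2n}e_j$ for the eigenvalue $2n$ is one-dimensional, spanned by $\hat\Psi_n(-q)=(\hat\Psi_\pi)_{\pi\in\NC_n}$. Thus it suffices to prove (i) the eigenvalue equation $\mathcal{H}_n\bigl((A_\pi)_{\pi\in\NC_n}\bigr)=2n\,(A_\pi)_{\pi\in\NC_n}$, and (ii) the normalisation $\sum_{\pi\in\NC_n}A_\pi=\ASM(n)$. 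Point (ii) is immediate: FPLs of size $n$ are in bijection with ASMs of size $n$, so the sum counts all FPLs, which is $\ASM(n)$ by the ASM theorem. Granting (i) and (ii), the vectors $(A_\pi)_\pi$ and $(\hat\Psi_\pi)_\pi$ span the same line; comparing sums of components --- which are $\ASM(n)$ and $1$, the latter since $\hat\Psi_n(-q)$ is a probability distribution on $\NC_n$ by Lemma~\ref{lem: paths are finite} --- yields $\hat\Psi_\pi=A_\pi/\ASM(n)$.

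The real content is point (i). With the action of $e_j$ on vectors indexed by $\NC_n$ from Section~\ref{sec: temp-lieb}, namely $e_j\bigl((v_\pi)_\pi\bigr)=(v_{e_j(\pi)})_\pi$, it is equivalent to the family of identities
\[
 \sum_{j=1}^{2n}A_{e_j(\pi)}=2n\,A_\pi\qquad\text{for every }\pi\in\NC_n ,
\]
which are exactly the Razumov-Stroganov relations (and can be checked by hand for $n\le 2$). A first structural input toward them is Wieland's gyration, an explicit bijection $\operatorname{gyr}$ on the set of FPLs of size $n$ whose effect on the link pattern is a rotation, $\pi(\operatorname{gyr}(F))=\rho(\pi(F))$ up to conventions; this already yields the rotational invariance $A_{\rho(\pi)}=A_\pi$ and makes the displayed system rotation-covariant, but it does not by itself prove it, and one still has to see the full Temperley-Lieb action reflected in the combinatorics of FPLs.

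To prove the displayed identities I would follow Cantini and Sportiello. Gyration is assembled from local plaquette-flip moves on the cells of an FPL; one examines how such a move, applied only to the cell adjacent to the boundary segment between external edges $j$ and $j+1$, transforms an FPL and its link pattern, and uses this to construct a sign-free bijection between the disjoint union over $j$ of the sets of FPLs with link pattern $e_j(\pi)$ and $2n$ disjoint copies of the set of FPLs with link pattern $\pi$, one copy per boundary position, in which matching the local boundary move against one step of gyration turns a ``contribution at position $j$'' into one ``at position $j+1$'', so that the total count balances cyclically. The subtlety is that the local move is an involution only away from a short list of exceptional local patterns, and the corresponding exceptional FPLs must be collected and cancelled by a separate argument, which is where the rotational invariance re-enters.

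The step I expect to be the genuine obstacle is precisely this bijective heart: there is no manifest Temperley-Lieb module structure on FPLs, so isolating the right local move, proving that it realises $e_j$ up to the counting identity, and controlling the boundary-exceptional configurations without over- or under-counting, is delicate and amounts to essentially the whole difficulty of the Razumov-Stroganov conjecture. Everything around it --- the reduction via Theorem~\ref{thm: Hamiltonian}, the normalisation, and reading off $\hat\Psi_\pi=A_\pi/\ASM(n)$ --- is routine.
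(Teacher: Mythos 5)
The first thing to say is that the paper does not prove this statement at all: it is the Razumov--Stroganov conjecture, and the paper imports it as an external result with a citation to Cantini and Sportiello \cite{Razumov-Stroganov-Proof}. Your surrounding reductions are correct and standard: at $q=e^{2\pi i/3}$ one has $\tau=1$, Theorem \ref{thm: Hamiltonian} gives uniqueness (up to scale) of the eigenvalue-$2n$ eigenvector of $\mathcal{H}_n$, and the normalisations $\sum_\pi A_\pi=\ASM(n)$ and $\sum_\pi\hat\Psi_\pi=1$ fix the constant. But the entire content of the theorem is your point (i), and there your text is an outline of the Cantini--Sportiello gyration strategy which, as you say yourself, leaves the ``bijective heart'' unproved; that step \emph{is} the Razumov--Stroganov conjecture. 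So what you have is a roadmap with the decisive step missing, not a proof; the honest alternatives are to carry out that argument in full or to cite \cite{Razumov-Stroganov-Proof}, which is exactly what the paper does.

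There is also a concrete error in the one formula you commit to. The identities $\sum_{j=1}^{2n}A_{e_j(\pi)}=2n\,A_\pi$ are not the Razumov--Stroganov relations and are false in general. You were misled by the loosely stated ``permutation action'' convention in Section \ref{sec: temp-lieb}: taken literally that is the transpose action, whose matrix has every row sum equal to $2n$, so its eigenvalue-$2n$ eigenspace is spanned by the constant vector (the geometric multiplicity equals that of the standard action, namely one); your displayed relations would therefore force all $A_\pi$ to be equal, which already fails for $n=3$ since the five values would each have to be $\ASM(3)/5=7/5$. The correct component form of $\mathcal{H}_n\bigl((A_\pi)_\pi\bigr)=2n\,(A_\pi)_\pi$ sums over preimages, $\sum_{j=1}^{2n}\sum_{\sigma\in e_j^{-1}(\pi)}A_\sigma=2n\,A_\pi$ at $\tau=1$, because $e_j$ acts on the link-pattern basis by $\pi\mapsto e_j(\pi)$ (with loop weight $\tau$ when $\pi$ has an arch $(j,j+1)$), so the $\pi$-component collects the $\sigma$ with $e_j(\sigma)=\pi$; this preimage form is also the one implicit in the recursion \eqref{eq: recursion of wheel poly} and in Lemma \ref{lem: temperley lieb operator for Young}. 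Your hand-check for $n\le 2$ could not detect the discrepancy because there all $A_\pi=1$. Any completion of the gyration argument must therefore target the preimage relations --- which is what Cantini and Sportiello actually prove.
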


\section{The vector space $W_n[z]$}

\subsection{The quantum Knizhnik-Zamolodchikov equations}

In order to introduce the quantum Knizhnik-Zamolodchikov equations (qKZ-equations), we need to define first the $R$-matrix  and the operator $S_i$
\begin{align*}
 \check{R}_i(u)&=\frac{(q u-q^{-1})\Id+(u-1)e_i}{q-q^{-1}u},\\
  S_i(z_1,\ldots, z_{2n}) &= \prod_{k=1}^{i-1}\check{R}_{i-k}\left(\frac{z_{i-k}}{q^6z_{i}}\right) \rho \prod_{k=1}^{2n-i}\check{R}_{2n-k}\left(\frac{z_{2n-k+1}}{z_{i}}\right),
\end{align*}
for $1 \leq i \leq 2n$, where $e_i$ is understood as an element of the Temperley-Lieb algebra and $\rho$ is the rotation as defined in section \ref{sec: temp-lieb}.
%
Denote by $\Psi_n = (\Psi_\pi)_{\pi \in \NC_n}$ a function in $z_1, \ldots, z_{2n},q$. The level $1$ \emph{qKZ-equations} are a system of $2n$ equations
\begin{equation}
 \label{eq: qKZ}
 S_i(z_1,\ldots, z_{2n})\Psi_n(t;z_1,\ldots, z_{2n})=\Psi_n(t;z_1,\ldots,q^6z_i,\ldots, z_{2n}),
\end{equation}
with $1 \leq i \leq 2n$. In the following we need the $2n+1$ equations

\begin{subequations}
\label{eq: general eq}
\begin{align}
\label{eq: Rmatrix applied on Psi}
 \check{R}_i\left(\frac{z_{i+1}}{z_i}\right)\Psi_n(t;z_1,\ldots, z_{2n})&=\Psi_n(t;z_1,\ldots,z_{i+1},z_i,\ldots, z_{2n}),\\
 \label{eq: rho applied on Psi}
 \rho^{-1}\Psi_n(t;z_1,\ldots, z_{2n})&=\Psi_n(t;z_2,\ldots, z_{2n},q^6z_1),
\end{align}
\end{subequations}
where $1 \leq i \leq 2n$ in \eqref{eq: Rmatrix applied on Psi}.

\begin{prop}[{\cite[section 4.1 and 4.3]{Zinn-Justin-Habil}}]
 \label{prop: qKz and O(tau)}
 \begin{enumerate}
  \item The system of equations \eqref{eq: Rmatrix applied on Psi} and \eqref{eq: rho applied on Psi} imply the system of  equations \eqref{eq: qKZ}.
  \item For $q=e^{\frac{2 \pi i}{3}}$ and hence $\tau=1$, it holds $S_i(z_1,\ldots, z_{2n})=T_n(z_i;z_1,\ldots,z_{2n})$. By using Lagrange interpolation one can show that \eqref{eq: qKZ} imply \eqref{eq: inhomogeneous Markov formula}. Since the solutions of \eqref{eq: inhomogeneous Markov formula} form a one dimensional vector space, the same is true for solutions of the system of equations \eqref{eq: qKZ} for $q=e^{\frac{2 \pi i}{3}}$.
 \end{enumerate}
\end{prop}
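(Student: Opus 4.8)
The plan is to prove the two parts in turn, following the route of \cite{Zinn-Justin-Habil}.

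For part (1), the idea is to read off the action of $S_i$ on the arguments of a solution $\Psi_n$ of \eqref{eq: Rmatrix applied on Psi}--\eqref{eq: rho applied on Psi} by transporting the variable $z_i$ once around the cylinder. Processing the product defining $S_i$ from right to left, the block $\prod_{k=1}^{2n-i}\check R_{2n-k}(z_{2n-k+1}/z_i)$ acts first: by \eqref{eq: Rmatrix applied on Psi} each factor transposes the argument currently equal to $z_i$ with its right neighbour, and after all $2n-i$ of them one reaches
\[
 \Psi_n(z_1,\dots,z_{2n}) \;\longmapsto\; \Psi_n(z_1,\dots,z_{i-1},z_{i+1},\dots,z_{2n},z_i).
\]
Applying $\rho$ and using \eqref{eq: rho applied on Psi} rotates the arguments and deposits the twist $q^{6}$ on the variable wrapping around, after which the block $\prod_{k=1}^{i-1}\check R_{i-k}(z_{i-k}/(q^{6}z_i))$ transports $q^{6}z_i$ back leftward into the $i$-th slot by \eqref{eq: Rmatrix applied on Psi}, yielding $\Psi_n(z_1,\dots,z_{i-1},q^{6}z_i,z_{i+1},\dots,z_{2n})$, which is \eqref{eq: qKZ}. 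The only thing to be checked is that at every step the spectral parameter of the $\check R$ being applied is exactly the ratio of the two arguments it exchanges; this is built into the definition of $S_i$, the $q^{6}$ in the left-hand block being precisely what matches the twist produced by $\rho$. (One must be careful about the convention for the action of $\rho$ on the vector; this is bookkeeping, not substance.)

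For part (2), I would first verify the operator identity $S_i(z_1,\dots,z_{2n})=T_n(z_i;z_1,\dots,z_{2n})$ at $q=e^{2\pi i/3}$, i.e.\ $\tau=1$. At this value $\check R_j(u)$ rewrites as $p\,\Id+(1-p)e_j$ with $p$ a M\"obius function of $u$ of exactly the shape of the plaquette probabilities $p_j=\frac{qz_j-q^{-1}t}{qt-q^{-1}z_j}$, and the rotation $\rho$ occurring in $S_i$ is the cylinder shift appearing in $T_n$; the substitution $t=z_i$ makes the spectral parameter of the $i$-th $\check R$ equal to $1$, so $\check R_i(1)=\Id$, matching $p_i=1$ at $t=z_i$, and the remaining factors line up with the plaquette transfer operators column by column. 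Since $q^{6}=1$ at $q=e^{2\pi i/3}$, the right-hand side of \eqref{eq: qKZ} becomes $\Psi_n(z_1,\dots,z_{2n})$, so every solution $\Psi_n$ of \eqref{eq: qKZ} satisfies
\[
 T_n(z_i;z_1,\dots,z_{2n})\,\Psi_n(z_1,\dots,z_{2n})=\Psi_n(z_1,\dots,z_{2n}),\qquad 1\le i\le 2n,
\]
i.e.\ the eigenvector equation \eqref{eq: inhomogeneous Markov formula} holds at $t=z_1,\dots,z_{2n}$. After clearing the common denominator $\prod_{j}(qt-q^{-1}z_j)$, each entry of $\bigl(T_n(t;z)-\Id\bigr)\Psi_n(z)$ is a polynomial in $t$ of controlled degree that vanishes at these $2n$ points, hence vanishes identically in $t$; thus $\Psi_n(z)$ solves \eqref{eq: inhomogeneous Markov formula} for every $t$ and so lies in the one-dimensional Perron--Frobenius eigenspace of $T_n(t;z)$. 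Combined with the existence of a nonzero solution — provided, via part (1), by the loop-model vector $\hat\Psi_n$, which satisfies \eqref{eq: Rmatrix applied on Psi}--\eqref{eq: rho applied on Psi} by the plaquette combinatorics — this shows that the solution space of \eqref{eq: qKZ} at $q=e^{2\pi i/3}$ is exactly one-dimensional.

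I expect the main obstacle to be the degree count in the Lagrange interpolation step: one has to make sure that the $2n$ interpolation points $t=z_1,\dots,z_{2n}$ really force $\bigl(T_n(t;z)-\Id\bigr)\Psi_n(z)\equiv 0$, which amounts to bounding the $t$-degree of the numerators of $T_n(t;z)$ (equivalently, controlling the $t\to\infty$ limit, where the row-stochasticity of $T_n(t;z)$ is the useful input). A secondary but unavoidable nuisance is the index bookkeeping in part (1): tracking, through the entire passage around the cylinder, which spectral parameter is attached to which adjacent pair of variables and where exactly the factor $q^{6}$ is deposited by $\rho$, so that the outcome matches the definition of $S_i$ verbatim.
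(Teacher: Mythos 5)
First, a remark on the comparison itself: the paper does not prove Proposition \ref{prop: qKz and O(tau)} at all --- it is imported with a citation to \cite[Sections 4.1 and 4.3]{Zinn-Justin-Habil} --- so there is no internal argument to measure you against; your sketch follows the same standard route as the cited source. Part (1), transporting $z_i$ around the cylinder using \eqref{eq: Rmatrix applied on Psi} for each elementary swap and \eqref{eq: rho applied on Psi} at the seam, is the right mechanism, and the $q^{6}$/$\rho$ bookkeeping you defer is indeed a matter of fixing conventions the paper itself leaves loose (one does have to check that the spectral parameters $z_{i-k}/(q^{6}z_i)$ in the left block are the ratios of the \emph{current} arguments after the twist has been deposited). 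Your identity $\check{R}_j(z_j/t)=p_j\,\Id+(1-p_j)e_j$ is also correct, but note it holds precisely because $q+q^{-1}=-1$: the coefficient of $e_j$ in $\check R_j(u)$ is $\frac{u-1}{q-q^{-1}u}$ while $1-p=\frac{(q+q^{-1})(1-u)}{q-q^{-1}u}$, so this is exactly where the specialisation $q=e^{2\pi i/3}$ enters.

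The genuine gap is the Lagrange interpolation step, and the input you propose does not close it. Multiplying each component of $\bigl(T_n(t;z)-\Id\bigr)\Psi_n(z)$ by $\prod_{j=1}^{2n}(qt-q^{-1}z_j)$ yields a polynomial in $t$ of degree up to $2n$ (every one of the $2n$ columns contributes a factor linear in $t$: either $qz_j-q^{-1}t$ or $(q+q^{-1})(t-z_j)$), whereas the relations $S_i\Psi_n=\Psi_n$ supply only the $2n$ roots $t=z_1,\dots,z_{2n}$. Hence the difference equals $c\prod_j(t-z_j)$ with $c=q^{2n}\bigl(T_\infty\Psi_n-\Psi_n\bigr)$, where $T_\infty=\lim_{t\to\infty}T_n(t;z)$, and one must still prove $T_\infty\Psi_n=\Psi_n$. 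Stochasticity of $T_n$ only says the all-ones covector is fixed on the left; it gives no control over $T_\infty\Psi_n$. Moreover $T_\infty$ is not deterministic: as $t\to\infty$ the plaquette weights tend to $-q^{-2}$ and $1+q^{-2}$, so $T_\infty$ is a homogeneous transfer operator with complex weights, neither the identity nor a pure rotation. Killing $c$ needs an extra structural input --- e.g.\ the commutativity of the family $t\mapsto T_n(t;z)$ (Yang--Baxter) together with uniqueness of the fixed vector, or the cyclic covariance \eqref{eq: rho applied on Psi} at $q^{6}=1$, as in the cited source --- and until that is supplied part (2) is incomplete. A smaller point: for the one-dimensionality conclusion you invoke existence of a nonzero solution of \eqref{eq: qKZ} via the claim that $\hat\Psi_n$ satisfies \eqref{eq: Rmatrix applied on Psi}--\eqref{eq: rho applied on Psi} ``by the plaquette combinatorics''; that assertion itself hides a Yang--Baxter/Perron--Frobenius uniqueness argument, and within this paper it is cleaner to take the explicit wheel-polynomial solution of Theorem \ref{thm: psi basis for wheel poly} as the witness.
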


\subsection{Wheel polynomials}

It turns out \cite[Theorem 4]{Around_the_RS_conj} that for $q=e^{\frac{2 \pi i}{3}}$ the components $\hat\Psi_\pi(t;z_1,\ldots, z_{2n})$ of the stationary distribution of the inhomogeneous $O(1)$ loop model are up to a common factor homogeneous polynomials in $z_1,\ldots, z_{2n}$ of degree $n(n-1)$ which are independent of $t$. In this section we characterise these homogeneous polynomials. In fact we characterise homogeneous solutions of degree $n(n-1)$ of \eqref{eq: Rmatrix applied on Psi} and \eqref{eq: rho applied on Psi} which are by Proposition \ref{prop: qKz and O(tau)} for $q=e^{\frac{2\pi i}{3}}$ also solutions of \eqref{eq: inhomogeneous Markov formula} . The results presented here can be found in \cite{Around_the_RS_conj,doubly,ground_state, Zinn-Justin-Habil,QKZ-Equation} and \cite{Romik}.

\begin{defi}
 Let $n$ be a positive integer and $q$ a variable. A homogeneous polynomial $p \in \Q(q)[z_1,\ldots, z_{2n}]$ of degree $n(n-1)$ is called \emph{wheel polynomial} of order $n$ if it satisfies the \emph{wheel condition}:
 \begin{equation}
  p(z_1,\ldots, z_{2n})|_{q^4z_i=q^2z_j=z_k}=0,
 \end{equation}
 for all triples $1 \leq i<j<k \leq 2n$.
 Denote by $W_n[z]$ the $\Q(q)$-vector space of wheel polynomials of order $n$.
\end{defi}

\begin{thm}[{\cite[Section 4.2]{doubly}}]
 \label{thm: dual basis of Wn}
 The dual space $W_n[z]^*$ of  $W_n[z]$ is given by
 \[
  W_n[z]^* = \bigoplus_{\pi \in \NC_n}\Q(q) \ev_\pi,
 \]
 where $\ev_\pi$ is defined as $\ev_\pi: p(z_1,\ldots, z_{2n}) \mapsto p(q^{\epsilon_1(\pi)},\ldots,q^{\epsilon_{2n}(\pi)})$ with $\epsilon_i(\pi)=-1$ iff an arch of $\pi$ has a left-endpoint labelled with $i$ and $\epsilon_i(\pi)=1$ otherwise.
\end{thm}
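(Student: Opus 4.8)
The plan is to pin down a basis of $W_n[z]$ explicitly and read off the dual basis from it. Concretely, I would produce for each $\pi\in\NC_n$ a wheel polynomial $\Psi_\pi\in W_n[z]$ so that the matrix $\bigl(\ev_\sigma(\Psi_\pi)\bigr)_{\sigma,\pi\in\NC_n}$ is triangular with respect to the partial order of Section~2 and has nonzero diagonal entries, and prove separately the dimension bound $\dim_{\Q(q)}W_n[z]\le C_n$. Granting these two inputs, the triangular matrix is invertible, so the $\Psi_\pi$ are linearly independent, whence $\dim_{\Q(q)}W_n[z]=C_n=|\NC_n|$ and $\{\Psi_\pi\}$ is a basis; since $\ev_\sigma=\sum_\pi\ev_\sigma(\Psi_\pi)\,\Psi_\pi^\ast$ and the coefficient matrix is invertible, the $\ev_\sigma$ form a basis of $W_n[z]^\ast$, i.e.\ $W_n[z]^\ast=\bigoplus_{\pi\in\NC_n}\Q(q)\,\ev_\pi$.

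For the triangular family I would start from the minimal matching $\pi_0=(1,2)(3,4)\cdots(2n-1,2n)$, for which $\lambda(\pi_0)=\emptyset$, taking $\Psi_{\pi_0}$ to be an explicit product of $n(n-1)$ linear forms in $z_1,\ldots,z_{2n}$ of shape $q z_i-q^{-1}z_j$ (with $z_j$ rescaled by $q^6$ when $j<i$): homogeneity of degree $n(n-1)$ is immediate, the wheel condition follows by substituting $q^4z_i=q^2z_j=z_k$ into a conveniently chosen factor, and one checks directly that $\ev_{\pi_0}(\Psi_{\pi_0})\neq 0$ while $\ev_\sigma(\Psi_{\pi_0})=0$ for every $\sigma\neq\pi_0$ (any such $\sigma$ has a nontrivially nested arch, forcing a factor to vanish at its evaluation point). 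The remaining $\Psi_\pi$ are constructed by induction on $|\lambda(\pi)|$ via Hecke-type divided-difference operators $\pi_i$ acting in a pair $z_i,z_{i+1}$: these preserve both homogeneity of degree $n(n-1)$ and the wheel condition, hence map $W_n[z]$ into itself, and the operator attached to the box added on a given diagonal sends $\Psi_{\pi'}$ (with $\pi'\nearrow_j\pi$) to a $\Psi_\pi$ whose evaluations $\ev_\sigma$ are governed by the combinatorial action of $e_j$, cf.\ Lemma~\ref{lem: temperley lieb operator for Young}. Keeping track of this intertwining yields $\ev_\pi(\Psi_\pi)\neq 0$ and $\ev_\sigma(\Psi_\pi)=0$ unless $\sigma\le\pi$, which is the triangularity needed.

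For the dimension bound I would argue by recursion matching the Catalan identity $C_n=\sum_{\ell=1}^{n}C_{\ell-1}C_{n-\ell}$. Given $p\in W_n[z]$ and an index $i$, restrict $p$ to the locus $z_{i+1}=q^{2}z_i$: the wheel conditions for triples containing $\{i,i+1\}$ then force the explicit product $\prod_{a<i}(z_a-q^{-2}z_i)\cdot\prod_{k>i+1}(z_k-q^{4}z_i)$ of $2n-2$ linear forms to divide $p|_{z_{i+1}=q^2z_i}$, and a degree count identifies the cofactor, after eliminating the residual dependence on $z_i$, with a wheel polynomial of order $n-1$. Assembling these restriction maps over a family of specializations indexed by the possible positions $2\ell$ of the arch through the point $1$ yields an injection of $W_n[z]$ into $\bigoplus_{\ell=1}^{n}W_{\ell-1}[z]\otimes W_{n-\ell}[z]$, and the bound follows by induction from $\dim_{\Q(q)}W_1[z]=1$; the precise bookkeeping is carried out in \cite{doubly}.

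I expect the dimension bound to be the main obstacle. The delicate points are pinning down the \emph{exact} set of linear factors cut out of $p$ on the specialization loci and confirming that the cofactor is genuinely a wheel polynomial of the correct order in the correct set of variables — a naive count leaves one spurious variable $z_i$, whose elimination must use the noncrossing structure (equivalently a further specialization) — together with the combinatorics matching the chosen family of loci with the decomposition $\NC_n\cong\bigsqcup_\ell\NC_{\ell-1}\times\NC_{n-\ell}$. By contrast, once a triangular family $\{\Psi_\pi\}$ and the bound $\dim_{\Q(q)}W_n[z]\le C_n$ are in hand, the identification of the dual basis is a formality.
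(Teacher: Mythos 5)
The paper itself does not prove this statement: it is imported verbatim from \cite[Section 4.2]{doubly}, so there is no internal argument to compare with. Your outline reconstructs the standard proof from that source, and its architecture (a family $\{\Psi_\pi\}$ whose evaluation matrix $(\ev_\sigma(\Psi_\pi))$ is triangular with nonzero diagonal, plus the upper bound $\dim_{\Q(q)}W_n[z]\le C_n$ obtained from the specialisations $z_{i+1}=q^2z_i$ and the Catalan recursion) is indeed the right one. Two points, however, need repair. First, your base case clashes with the paper's conventions: under the bijection of Section~2 the empty Young diagram corresponds to the fully nested matching $()_n$ (the word $L^nR^n$ gives the path $N^nE^n$), while $(1,2)(3,4)\cdots(2n-1,2n)$ corresponds to the full staircase $(n-1,n-2,\ldots,1)$, i.e.\ the \emph{maximal} element. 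The explicit product of $n(n-1)$ factors is $\Psi_{()_n}$ of \eqref{eq: Psi_null}, and it is for $\sigma\ne()_n$ (not for $\sigma\ne(1,2)(3,4)\cdots$) that one factor $qz_i-q^{-1}z_j$ vanishes at $z=q^{\epsilon(\sigma)}$. For the matching $(1,2)(3,4)\cdots$ no product of the shape you describe is available; note in particular that a factor rescaled by $q^6$ never vanishes at a point with all coordinates $q^{\pm1}$ when $q$ is a formal variable, so such factors cannot enforce any of the claimed vanishings over $\Q(q)$. The induction should therefore start at $()_n$ and build upward with the $\D_j$'s, exactly as in \eqref{eq: recursion of wheel poly}, and the triangularity statement must be formulated (and proved) with the order oriented accordingly.

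Second, and more seriously, the dimension bound is not actually established in your proposal. You correctly isolate the crux: on the locus $z_{i+1}=q^2z_i$ the wheel condition forces the $2n-2$ linear factors you list, but the cofactor is a priori still a polynomial in $z_i$, and ruling out this dependence requires a degree bound in each single variable ($\deg_{z_j}p\le n-1$), which is not part of the definition used in this paper (only total homogeneity of degree $n(n-1)$ is assumed) and itself needs an argument; one must also check that the cofactor satisfies the wheel condition of order $n-1$ in the remaining, relabelled variables, and that the chosen family of specialisations separates $W_n[z]$, matching $\NC_n\cong\bigsqcup_\ell\NC_{\ell-1}\times\NC_{n-\ell}$. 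At precisely this point you write that ``the precise bookkeeping is carried out in \cite{doubly}'' --- but that reference is the very theorem being proved, so as a self-contained proof the proposal has a genuine gap at its self-identified hardest step. Once the triangular family (with the corrected base case and order) and the bound $\dim_{\Q(q)}W_n[z]\le C_n$ are both in place, your concluding linear-algebra step identifying $W_n[z]^*=\bigoplus_{\pi\in\NC_n}\Q(q)\ev_\pi$ is fine.
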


Define the linear maps $\S_k,\D_k: \Q(q)[z_1,\ldots, z_{2n}] \rightarrow \Q(q)[z_1,\ldots, z_{2n}]$ for $1 \leq k \leq 2n$ as
\begin{align}
 \label{eq: defi of S_k}
 &\S_k: \quad f(z_1,\ldots, z_{2n}) \; \mapsto \; f(z_1,\ldots, z_{k+1},z_{k},\ldots, z_{2n}),\\
 \label{eq: defi of D_k}
 &\D_k: \quad f \; \mapsto \;   \frac{q z_k-q^{-1}z_{k+1}}{z_{k+1}-z_k}(\S_k(f)-f).
\end{align}
By setting $\D_{k+2n}:=\D_k$ we extend the definition of $\D_k$ to all integers $k$.\\
The operators $\D_k$ are introduced as an abbreviation for $(q z_k-q^{-1}z_{k+1})\delta_k$, where $\delta_k=\frac{1}{z_{k+1}-z_k}(\S_k-\Id)$ has been used before , e.\,g., in \cite{Zinn-Justin-Habil}.
One can verify easily the following Lemma.

\begin{lem}
\label{lem: S,D operator}
\begin{enumerate}
 \item  The space $W_n[z]$ of all wheel polynomials of order $n$ is closed under the action of $\D_k$ for
  $1 \leq k \leq 2n-1$. If $q=e^{\frac{2 \pi i}{3}}$ the vector space $W_n[z]$ is also closed under $\D_{2n}$.
  \item For all $1 \leq k \leq 2n$ and all polynomials $f,g \in \mathbb{Q}(q)[z_1,\ldots, z_{2n}]$ one has
  \begin{align}
  \label{eq: product rule}
   \D_k(f g) =\D_k(f)\S_k(g) +f \D_k(g).
  \end{align}
\end{enumerate}
\end{lem}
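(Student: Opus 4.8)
The plan is to treat the two parts separately, disposing of the product rule first since it is purely formal. For part (2) I would use that $\S_k$ is a $\Q(q)$-algebra automorphism of $\Q(q)[z_1,\ldots,z_{2n}]$ (it merely transposes the variables $z_k$ and $z_{k+1}$), so that $\S_k(fg)=\S_k(f)\,\S_k(g)$. Then the telescoping identity
\[
\S_k(f)\S_k(g)-fg=\bigl(\S_k(f)-f\bigr)\S_k(g)+f\bigl(\S_k(g)-g\bigr)
\]
becomes exactly \eqref{eq: product rule} after multiplying through by $\tfrac{qz_k-q^{-1}z_{k+1}}{z_{k+1}-z_k}$.

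For part (1) I would first record that $\D_k$ preserves the grading and sends polynomials to polynomials: $\S_k(p)-p$ vanishes on the hyperplane $z_k=z_{k+1}$ and is therefore divisible by $z_{k+1}-z_k$ in $\Q(q)[z_1,\ldots,z_{2n}]$, so $\tfrac{\S_k(p)-p}{z_{k+1}-z_k}$ is a polynomial, homogeneous of degree $n(n-1)-1$ whenever $p$ is homogeneous of degree $n(n-1)$, and multiplying by the linear form $qz_k-q^{-1}z_{k+1}$ restores the degree $n(n-1)$. The real content is the wheel condition. I would fix a triple $1\le a<b<c\le 2n$ and the substitution $\phi\colon z_a\mapsto t,\ z_b\mapsto q^2t,\ z_c\mapsto q^4t$ (fixing all other variables), so that the wheel condition at $(a,b,c)$ reads $\phi(p)=0$. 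Writing $\D_k(p)=(qz_k-q^{-1}z_{k+1})\cdot\tfrac{\S_k(p)-p}{z_{k+1}-z_k}$ and noting that $\phi(z_{k+1}-z_k)\neq 0$ in every case (the two images are among $t,q^2t,q^4t$ and the free variables, and are never equal), it suffices to show $\phi\bigl(qz_k-q^{-1}z_{k+1}\bigr)\bigl(\phi(\S_k(p))-\phi(p)\bigr)=0$.

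I would then split into cases according to $\{k,k+1\}\cap\{a,b,c\}$. If the intersection is empty, $\S_k$ commutes with $\phi$, so $\phi(\S_k(p))=\S_k(\phi(p))=0=\phi(p)$. If $\{k,k+1\}\subseteq\{a,b,c\}$, then $(k,k+1)\in\{(a,b),(b,c)\}$ and a one-line computation shows the prefactor $\phi(qz_k-q^{-1}z_{k+1})$ equals $0$. The remaining and genuinely delicate case is that exactly one of $k,k+1$ lies in $\{a,b,c\}$; here $\phi(p)=0$ still holds, and the key point — which I expect to be the main obstacle — is to recognise $\phi(\S_k(p))$ as $p$ evaluated on the wheel locus of a \emph{shifted} triple, obtained from $(a,b,c)$ by moving the index lying in $\{a,b,c\}$ to its $\S_k$-neighbour (with possibly one extra coincidence of variables, which only shrinks the locus). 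One then checks, in each of the six sub-cases ($k$ or $k+1$ equal to $a$, $b$, or $c$), that the shifted triple is again strictly increasing, so that the wheel condition for $p$ forces $\phi(\S_k(p))=0$; this asymmetric bookkeeping is where the specific form $q^4z_i=q^2z_j=z_k$ of the wheel condition has to be used with care. Finally, for $\D_{2n}$ under the hypothesis $q=e^{2\pi i/3}$ (so $q^3=1$), I would reduce the wrap-around operator to the cases already handled by invoking the cyclic invariance of wheel polynomials available at this special value of $q$ (as in the references cited before the lemma): the "wrap-around" wheel conditions then follow from the ordinary ones, and the case analysis above applies verbatim.
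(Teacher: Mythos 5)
Your proof is correct. The paper itself gives no argument for this lemma (it is prefaced only by ``One can verify easily the following Lemma''), so your writeup simply supplies the omitted verification, and it is the natural one: the product rule is immediate from $\S_k(fg)=\S_k(f)\S_k(g)$ and the telescoping identity, polynomiality and homogeneity follow from divisibility of $\S_k(p)-p$ by $z_{k+1}-z_k$, and the case analysis on $\{k,k+1\}\cap\{a,b,c\}$ is sound --- in particular the six deferred sub-cases do close, since the index being moved is sent to its $\S_k$-neighbour, which by assumption is not in $\{a,b,c\}$, so the shifted triple is strictly increasing and the extra specialisation of a free variable is harmless. Your treatment of $\D_{2n}$ is also viable: either note that $\D_{2n}=R^{-1}\circ\D_1\circ R$ identically in $q$, where $R\colon p(z_1,\ldots,z_{2n})\mapsto p(z_2,\ldots,z_{2n},z_1)$, and that $R$ preserves $W_n[z]$ exactly when the wheel condition is cyclic, i.e.\ when $q^6=1$ (so in particular at $q=e^{2\pi i/3}$); or run the case analysis directly for $k=2n$, where the genuinely new feature is the triple $(1,b,2n)$, whose prefactor $q\cdot q^4t-q^{-1}t=q^{-1}(q^6-1)t$ vanishes precisely at this special value --- which also explains why closure under $\D_{2n}$ fails for generic $q$.
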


The following theorem describes a very important $\Q(q)$-basis of $W_n[z]$.

\begin{thm}[{\cite[Section 4.2]{Zinn-Justin-Habil}}]
\label{thm: psi basis for wheel poly}
Set
 \begin{align}
 \label{eq: Psi_null}
 \Psi_{()_n}(z_1,\ldots,z_{2n}):= (q-q^{-1})^{-n(n-1)} \prod_{1\leq i <j\leq n} (q z_i-q^{-1}z_j)(qz_{n+i}-q^{-1}z_{n+j}).
 \end{align}
Define for two noncrossing matchings $\sigma,\pi$ with $\sigma \nearrow_j \pi$
\begin{equation}
 \label{eq: recursion of wheel poly}
 \Psi_\pi := \D_j(\Psi_{\sigma}) - \sum_{\tau \in e_j^{-1}(\sigma)\setminus \{\sigma, \pi\}} \Psi_\tau.
\end{equation}
Then $\Psi_\pi$ is well-defined for all $\pi \in \NC_n$ and  satisfies
\begin{equation}
 \label{eq: rotation of wheel poly}
 \Psi_{\rho^{-1}(\pi)}(z_1,\ldots, z_{2n})=\Psi_\pi(z_2,\ldots, z_{2n},q^6z_1).
\end{equation}
The set $\{\Psi_\pi, \pi \in \NC_n\}$ is further a $\Q(q)$-basis of $W_n[z]$.
\end{thm}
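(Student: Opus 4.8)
The plan is to prove Theorem~\ref{thm: psi basis for wheel poly} in three stages: first establish that the recursion~\eqref{eq: recursion of wheel poly} actually produces a well-defined family $\{\Psi_\pi\}_{\pi\in\NC_n}$, then identify each $\Psi_\pi$ via a dual-space characterization, and finally deduce the rotation covariance~\eqref{eq: rotation of wheel poly} and the basis property. For well-definedness the issue is that a given $\pi$ may be reached from several different $\sigma$ with $\sigma\nearrow_j\pi$ (different boxes/diagonals $j$), so one must check the right-hand side of~\eqref{eq: recursion of wheel poly} does not depend on the choice. The natural way to do this is induction on $|\lambda(\pi)|$: assuming $\Psi_\sigma$ is defined and characterized for all $\sigma$ with smaller diagram, one verifies the consistency relation between two reduction paths. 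I would first check directly that $\Psi_{()_n}$ as defined in~\eqref{eq: Psi_null} is indeed a wheel polynomial of order $n$ (it is homogeneous of degree $n(n-1)$, and the wheel condition $q^4 z_i = q^2 z_j = z_k$ forces, for $i<j<k$, either two of $i,j,k$ to lie among $\{1,\dots,n\}$ or among $\{n+1,\dots,2n\}$, and in each such case one of the displayed factors $q z_a - q^{-1} z_b$ vanishes).

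The heart of the argument is to show that $\Psi_\pi$ is characterized by its evaluations $\ev_\sigma(\Psi_\pi)$ for $\sigma\in\NC_n$, using Theorem~\ref{thm: dual basis of Wn}. Concretely I would prove by induction that $\ev_\sigma(\Psi_\pi) = \delta_{\sigma,\pi}\cdot c_\pi$ for a suitable (triangular, with respect to the partial order $\leq$ on $\NC_n$) normalization, or more precisely that the matrix $(\ev_\sigma(\Psi_\pi))_{\sigma,\pi}$ is triangular with nonzero diagonal. The inductive step uses two ingredients: the behaviour of $\D_j$ under evaluation, and Lemma~\ref{lem: temperley lieb operator for Young}(1), which says $e_j^{-1}(\sigma)\subseteq\{\tau\mid \sigma\nearrow_j\tau\}\cup\{\tau\mid\tau\leq\sigma\}$ — this is exactly what controls which terms appear in~\eqref{eq: recursion of wheel poly} and shows the subtracted $\Psi_\tau$ all have diagrams $\leq\lambda(\sigma)$, hence strictly smaller than $\lambda(\pi)$. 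One needs the key computation that $\ev_\sigma\circ\D_j$ relates to $\ev_{e_j(\sigma)}$ (or $\ev_\sigma$) — when $\sigma$ has an arch joining $j,j+1$ the specialization $z_j=q^{?},z_{j+1}=q^{?}$ makes $\S_j$ act trivially in a controlled way, and otherwise $\D_j$ lowers or preserves the diagram. Combining, the right side of~\eqref{eq: recursion of wheel poly} has the predicted evaluation vector independent of the chosen reduction, giving both well-definedness and the triangular evaluation matrix; by Theorem~\ref{thm: dual basis of Wn} the $2n$-tuple $(\ev_\sigma)$ separates points of $W_n[z]$, so nonzero triangular evaluation matrix forces $\{\Psi_\pi\}$ to be linearly independent, hence (since $\dim W_n[z] = |\NC_n| = C_n$) a basis.

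For the rotation identity~\eqref{eq: rotation of wheel poly}, write $\widetilde\Psi_\pi(z_1,\dots,z_{2n}) := \Psi_{\rho^{-1}(\pi)}(z_2,\dots,z_{2n},q^6 z_1)$ and check that this family satisfies the \emph{same} recursion: one verifies $\widetilde\Psi_{()_n} = \Psi_{()_n}$ directly from~\eqref{eq: Psi_null} (the cyclic shift together with the $q^6$ factor permutes the product of linear forms back to itself, using $q^6 = 1$ when $q = e^{2\pi i/3}$, or more robustly tracking the $q$-powers), and that $\D_j$ and $e_j^{-1}$ intertwine with the shift in the way that matches $\rho^{-1}$ on matchings (so $\nearrow_j$ on $\rho^{-1}(\pi)$ corresponds to $\nearrow_{j+1}$ on $\pi$, modulo $2n$). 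Since both families obey the same recursion from the same initial datum, uniqueness (just proved) gives $\widetilde\Psi_\pi = \Psi_\pi$. The main obstacle I anticipate is the well-definedness/consistency step: carefully verifying that when $\pi$ is obtained from $\sigma$ and from $\sigma'$ via boxes on different diagonals, the two instances of~\eqref{eq: recursion of wheel poly} agree — this requires the commutation and braid relations among the $\D_k$ (analogues of $e_i e_j = e_j e_i$ for $|i-j|\geq 2$ and $e_i e_{i\pm1} e_i = e_i$) together with a bookkeeping of the lower-order terms supplied by Lemma~\ref{lem: temperley lieb operator for Young}, and is the place where a genuine computation cannot be avoided.
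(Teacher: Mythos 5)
A preliminary remark on the comparison you ask for: the paper contains no proof of Theorem~\ref{thm: psi basis for wheel poly} at all; it is imported from \cite[Section 4.2]{Zinn-Justin-Habil} (detailed accounts are in \cite{Around_the_RS_conj} and \cite{Romik}), so your attempt can only be measured against the literature. Your architecture for the first two assertions is the standard one and is sound in outline: the verification that $\Psi_{()_n}$ satisfies the wheel condition is correct, and well-definedness plus linear independence are indeed obtained by induction on $|\lambda(\pi)|$, using Lemma~\ref{lem: temperley lieb operator for Young}(1) to control the subtracted terms in \eqref{eq: recursion of wheel poly} and a triangularity statement for the evaluation matrix against the functionals $\ev_\sigma$ of Theorem~\ref{thm: dual basis of Wn}, followed by a dimension count. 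But the two computations that carry all the weight are only announced, not performed: the consistency of \eqref{eq: recursion of wheel poly} when $\pi$ is reachable from two different $\sigma\nearrow_j\pi$ (you explicitly defer this, and it requires the relations of Lemma~\ref{lem: properties of D-operators} together with nontrivial bookkeeping of the lower-order terms), and the lemma describing $\ev_\sigma\circ\D_j$, without which the triangularity claim is unsupported. Note also that your first formulation $\ev_\sigma(\Psi_\pi)=\delta_{\sigma,\pi}c_\pi$ is false; only triangularity with respect to $\leq$ with nonzero diagonal holds. So for these parts the proposal is a correct strategy but not yet a proof.

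For the rotation identity \eqref{eq: rotation of wheel poly} the proposed route would fail as written, not merely remain incomplete. Rotation does not act on Young diagrams by shifting diagonals: $\sigma\nearrow_j\pi$ does not in general correspond to $\rho^{-1}(\sigma)\nearrow_{j+1}\rho^{-1}(\pi)$ (the paper's description of $e_{2n}$, Figure~\ref{fig: temperley for Young2}, already shows how drastically the diagram is rearranged at the wrap-around). In particular $\rho^{-1}(()_n)\neq()_n$: it is the matching pairing $i$ with $2n-1-i$ for $1\leq i\leq n-1$ and $2n-1$ with $2n$, whose Young diagram is large. Hence your ``base case'' $\widetilde\Psi_{()_n}=\Psi_{()_n}$ is not a direct inspection of \eqref{eq: Psi_null}: one side is the recursively built polynomial of a nontrivial matching, and the twisted cyclic shift certainly does not fix the product \eqref{eq: Psi_null}, since it creates mixed factors $qz_a-q^{-1}z_{n+1}$ that do not occur there. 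There is also a direction slip: with your definition, $\widetilde\Psi_\pi=\Psi_\pi$ amounts to $\Psi_{\rho(\sigma)}(z_1,\ldots,z_{2n})=\Psi_\sigma(z_2,\ldots,z_{2n},q^6z_1)$, i.e.\ rotation by $\rho$ instead of $\rho^{-1}$; and the appeal to $q^6=1$ is not available, since the basis statement is over $\Q(q)$ with $q$ generic. The proofs in the cited sources instead show that the vector $(\Psi_\pi)_{\pi\in\NC_n}$ satisfies the exchange relations \eqref{eq: Rmatrix applied on Psi} for all $i$ (the reformulation \eqref{eq: Rmatrix applied on Psi reformulated} recorded after the theorem) and then establish the cyclic relation \eqref{eq: rho applied on Psi} by a separate argument exploiting the degree bound and the wheel condition, rather than by matching the box-addition recursion with its rotated image; to repair your plan you would need that route, or a genuinely correct (and much more involved) description of how $\rho$ interacts with the recursion.
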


The noncrossing matchings $\tau$ which appear in the sum of \eqref{eq: recursion of wheel poly} satisfy by Lemma \ref{lem: temperley lieb operator for Young} the relation $\tau < \pi$. Hence we can use \eqref{eq: recursion of wheel poly} to calculate the basis $\Psi_\pi$ of $W_n[z]$ recursively. The vector $\Psi_n=(\Psi_\pi)_{\pi \in \NC_n}$ satisfies \eqref{eq: Rmatrix applied on Psi}. This is true since we can reformulate \eqref{eq: Rmatrix applied on Psi} as
\begin{equation}
 \label{eq: Rmatrix applied on Psi reformulated}
 e_i \Psi_n = \D_i (\Psi_n) -(q+q^{-1})\Psi_n,
\end{equation}
for $1 \leq i \leq 2n-1$. Let $\sigma, \pi \in \NC_n$ with $\sigma \nearrow_i \pi$, then the $\sigma$ component of both sides in \eqref{eq: Rmatrix applied on Psi reformulated} is
\[
 \Psi_\pi-(q+q^{-1})\Psi_\sigma+\sum_{\tau \in e_i^{-1}(\sigma)\setminus \{\sigma,\pi\}} \Psi_\tau = \D_i(\Psi_\sigma) -(q+q^{-1})\Psi_\sigma,
\]
which is exactly \eqref{eq: recursion of wheel poly}.
Since $\Psi_n$ satisfies \eqref{eq: rho applied on Psi} by Theorem \ref{thm: psi basis for wheel poly}, Proposition \ref{prop: qKz and O(tau)} states that $\Psi_n$ is a solution of the qKZ equations and therefore for $\tau=1$ a multiple of the stationary distribution of the inhomogeneous $O(1)$ loop model. By setting $z_1=\ldots =z_{2n}=1$ Theorem \ref{thm: Razumov-Stroganov} implies $\Psi_\pi(1,\ldots,1)|_{\tau=1}=c A_\pi$ for an appropriate constant $c$. Because of $\Psi_{()_n}(1,\ldots,1)|_{\tau=1}=1=A_{()_n}$ by definition, and Theorem \ref{thm: Razumov-Stroganov} we obtain the following theorem.

\begin{thm}
 \label{thm: wheel poly to FPLs}
 Set $q=e^{\frac{2 \pi i}{3}}$ and let $\pi \in \NC_n$, then one has
 \begin{align*}
  \Psi_\pi(z_1,\ldots,z_{2n})&=\ASM(n) \times \hat\Psi_n(t;z_1, \ldots,z_{2n}),\\
  \nonumber \Psi_\pi(1,\ldots,1)&= A_\pi.
 \end{align*}
\end{thm}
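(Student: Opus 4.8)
The plan is to recognise the vector $\Psi_n=(\Psi_\pi)_{\pi\in\NC_n}$ of wheel polynomials as a scalar multiple of the stationary distribution $\hat\Psi_n$ of the inhomogeneous $O(1)$ loop model, and then to pin the scalar down by a single evaluation.

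First I would check that $\Psi_n$ solves the system \eqref{eq: Rmatrix applied on Psi}--\eqref{eq: rho applied on Psi}. Equation \eqref{eq: rho applied on Psi}, read componentwise, is exactly the rotation identity \eqref{eq: rotation of wheel poly} of Theorem \ref{thm: psi basis for wheel poly}. For \eqref{eq: Rmatrix applied on Psi} I would pass to the equivalent reformulation $e_i\Psi_n=\D_i(\Psi_n)-(q+q^{-1})\Psi_n$ for $1\le i\le 2n-1$ (equation \eqref{eq: Rmatrix applied on Psi reformulated}), which drops out of the explicit shape of $\check R_i$ and the definition \eqref{eq: defi of D_k} of $\D_i$ after clearing denominators, and then verify it coordinate by coordinate. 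On a coordinate $\sigma$ that admits a box on the $i$-th diagonal, say $\sigma\nearrow_i\pi$, the identity is precisely the defining recursion \eqref{eq: recursion of wheel poly} (this is the computation already displayed just above the theorem); on a coordinate where $\sigma$ carries an arch between $i$ and $i+1$ one invokes the wheel condition, and on the remaining coordinates one uses part (1) of Lemma \ref{lem: temperley lieb operator for Young} to control $e_i^{-1}(\sigma)$. The boundary index $i=2n$ is treated the same way, the point being that $W_n[z]$ is closed under $\D_{2n}$ precisely because $q=e^{2\pi i/3}$, by Lemma \ref{lem: S,D operator}(1).

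With \eqref{eq: Rmatrix applied on Psi}--\eqref{eq: rho applied on Psi} established, Proposition \ref{prop: qKz and O(tau)}(1) shows $\Psi_n$ satisfies the qKZ equations \eqref{eq: qKZ}. Specialising to $q=e^{2\pi i/3}$, so $\tau=1$, Proposition \ref{prop: qKz and O(tau)}(2) identifies $S_i$ with the transition matrix $T_n(z_i;z_1,\ldots,z_{2n})$ and, via Lagrange interpolation, deduces that any solution of \eqref{eq: qKZ} solves the stationarity equation \eqref{eq: inhomogeneous Markov formula}, whose solution space is one-dimensional. Since $\hat\Psi_n$ spans that space, $\Psi_n$ is a scalar multiple of it, and as the $\Psi_\pi$ and the polynomial representatives of the $\hat\Psi_\pi$ are homogeneous of the common degree $n(n-1)$ by the results of \cite{Around_the_RS_conj}, the multiple is a constant. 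To determine it I evaluate the $()_n$-component at $z_1=\cdots=z_{2n}=1$: from \eqref{eq: Psi_null} one has $\Psi_{()_n}(1,\ldots,1)=(q-q^{-1})^{-n(n-1)}(q-q^{-1})^{n(n-1)}=1$, while $A_{()_n}=1$ since the fully nested link pattern $()_n$ is realised by a unique FPL; combined with Theorem \ref{thm: Razumov-Stroganov}, which gives $\hat\Psi_\pi(-q;1,\ldots,1)=A_\pi/\ASM(n)$, this forces the constant to be $\ASM(n)$. Hence $\Psi_n=\ASM(n)\,\hat\Psi_n$, and in particular $\Psi_\pi(1,\ldots,1)=\ASM(n)\cdot A_\pi/\ASM(n)=A_\pi$.

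I expect the main obstacle to be the first step: verifying \eqref{eq: Rmatrix applied on Psi reformulated} on every coordinate, including those where no box is added on the relevant diagonal and the boundary index $i=2n$, since this is where the combinatorics of $e_j$-preimages (Lemma \ref{lem: temperley lieb operator for Young}), the wheel condition (Theorem \ref{thm: dual basis of Wn}), and the special value $q=e^{2\pi i/3}$ all have to be used together. Everything afterwards is a chain of the cited results plus the one-line normalisation computation.
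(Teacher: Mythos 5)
Your proposal is correct and follows essentially the same route as the paper: the paper's argument (given in the paragraph preceding the theorem) likewise verifies \eqref{eq: Rmatrix applied on Psi} via the reformulation \eqref{eq: Rmatrix applied on Psi reformulated} and the recursion \eqref{eq: recursion of wheel poly}, invokes \eqref{eq: rotation of wheel poly} for \eqref{eq: rho applied on Psi}, applies Proposition \ref{prop: qKz and O(tau)} and the one-dimensionality of the solution space at $\tau=1$, and fixes the constant through $\Psi_{()_n}(1,\ldots,1)=1=A_{()_n}$ together with Theorem \ref{thm: Razumov-Stroganov}. Your treatment is in fact somewhat more explicit than the paper's (which only checks the coordinates $\sigma\nearrow_i\pi$ and does not spell out the remaining coordinates, the index $i=2n$, or why the scalar is constant), but the underlying argument is the same.
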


\subsection{A new basis for $W_n[z]$}

The following lemma is a direct consequence of the definitions of the $\D_i$'s and $\Psi_{()_n}$.
\begin{lem}
 \label{lem: properties of D-operators}
 Let $n$ be a positive integer, then one has
 \begin{enumerate}
  \item $\D_i \circ \D_i= (q+q^{-1}) \D_i$ for $1 \leq i \leq 2n$,\label{item: squared}
  \item $\D_i \circ \D_j = \D_j \circ \D_i$ for  $1 \leq i,j \leq 2n$ with $|i-j| >1$,\label{item: commute}
  \item $\D_{i+1}\circ \D_i \circ \D_{i+1} +\D_i = \D_i \circ \D_{i+1} \circ \D_i +\D_{i+1}$ for  $1 \leq i \leq 2n$,\label{item: non commute}
  \item $\D_i(\Psi_{()_n})= (q+q^{-1})\Psi_{()_n}$ for $i \notin\{n,2n\}$.\label{item: applied}         
 \end{enumerate}
\end{lem}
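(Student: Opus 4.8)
The plan is to derive all four items directly from the definition~\eqref{eq: defi of D_k} of $\D_k$, the relation $\S_k^2=\Id$, and the product rule~\eqref{eq: product rule}; only item~\eqref{item: non commute} requires real work. Write $c_k=\frac{qz_k-q^{-1}z_{k+1}}{z_{k+1}-z_k}$, so that $\D_k(f)=c_k\,\S_k(f)-c_k\,f$. For~\eqref{item: squared}, applying $\D_k$ a second time and using $\S_k^2=\Id$ gives $\D_k(\D_k(f))=-(c_k+\S_k(c_k))\,\D_k(f)$, and a direct computation yields $c_k+\S_k(c_k)=\frac{(qz_k-q^{-1}z_{k+1})-(qz_{k+1}-q^{-1}z_k)}{z_{k+1}-z_k}=-(q+q^{-1})$. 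For~\eqref{item: commute}, when $|i-j|>1$ the operators $\D_i$ and $\D_j$ involve the disjoint variable sets $\{z_i,z_{i+1}\}$ and $\{z_j,z_{j+1}\}$, hence $\S_i$ fixes $c_j$, $\S_j$ fixes $c_i$, and $\S_i\S_j=\S_j\S_i$; expanding $\D_i\D_j$ and $\D_j\D_i$ then gives literally the same operator. For~\eqref{item: applied}, if $i\notin\{n,2n\}$ then $\{i,i+1\}$ lies inside $\{1,\dots,n\}$ or inside $\{n+1,\dots,2n\}$; say the former. In $\Psi_{()_n}$ the factor $\prod_{1\le a<b\le n}(qz_{n+a}-q^{-1}z_{n+b})$ does not involve $z_i$ or $z_{i+1}$, so by~\eqref{eq: product rule} it may be pulled out of $\D_i$. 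Writing $\prod_{1\le a<b\le n}(qz_a-q^{-1}z_b)=(qz_i-q^{-1}z_{i+1})\,R$, the factors of $R$ divisible by $z_i$ or $z_{i+1}$ come in the $\S_i$-stable pairs $\{qz_a-q^{-1}z_i,\ qz_a-q^{-1}z_{i+1}\}$ $(a<i)$ and $\{qz_i-q^{-1}z_b,\ qz_{i+1}-q^{-1}z_b\}$ $(b>i+1)$, so $R$ is $\S_i$-invariant and $\D_i(R)=0$. By~\eqref{eq: product rule} then $\D_i(\Psi_{()_n})$ is a scalar times $\D_i(qz_i-q^{-1}z_{i+1})\cdot R$, and since $\D_i(qz_i-q^{-1}z_{i+1})=(q+q^{-1})(qz_i-q^{-1}z_{i+1})$ this equals $(q+q^{-1})\Psi_{()_n}$.

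The braid-type relation~\eqref{item: non commute} is the main obstacle. I would first reformulate it. Put $T_k:=\D_k-q^{-1}\Id$; then~\eqref{item: squared} is equivalent to the Hecke quadratic $T_k^2=(q-q^{-1})T_k+\Id$, and a short calculation expanding $\D_{i+1}\D_i\D_{i+1}-\D_i\D_{i+1}\D_i$ in terms of the $T$'s and repeatedly using this quadratic collapses it to $(T_{i+1}T_iT_{i+1}-T_iT_{i+1}T_i)+(T_{i+1}-T_i)$. Since $\D_{i+1}-\D_i=T_{i+1}-T_i$, relation~\eqref{item: non commute} is precisely the braid relation $T_iT_{i+1}T_i=T_{i+1}T_iT_{i+1}$. (This explains the correction terms $\D_i$, $\D_{i+1}$ in~\eqref{item: non commute}: they are exactly what turns it into a braid relation after the scalar shift $\D_k=T_k+q^{-1}\Id$, and shows that the $T_k$ behave like the generators of a type-$A$ Hecke algebra.)

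To verify the braid relation --- equivalently~\eqref{item: non commute} --- I would note that it involves only $x:=z_i$, $y:=z_{i+1}$, $w:=z_{i+2}$, so it suffices to check it as an identity of $\Q(q)$-linear operators on $\Q(q)[x,y,w]$. There, every operator built from $\S_i,\S_{i+1}$ and multiplications by rational functions can be written uniquely as $\sum_{\sigma\in S_3}g_\sigma\,\sigma$ with $g_\sigma\in\Q(q)(x,y,w)$, the sum running over the six permutations of $\{x,y,w\}$, and one computes the $g_\sigma$ by pushing all multiplications to the left via $\sigma\circ(\text{mult.\ by }p)=(\text{mult.\ by }\sigma(p))\circ\sigma$ together with $\D_k=c_k\S_k-c_k$. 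The coefficients of the three elements of $S_3$ of length $\ge2$ agree between $\D_{i+1}\D_i\D_{i+1}$ and $\D_i\D_{i+1}\D_i$ by a direct computation (and $\D_i,\D_{i+1}$, having length $\le1$, contribute nothing there); the coefficient of $\S_i$ reduces, after clearing denominators, to a single polynomial identity in $q,x,y,w$ that is readily checked; the coefficient of $\S_{i+1}$ then follows from the symmetry $x\leftrightarrow w$, $\S_i\leftrightarrow\S_{i+1}$, which interchanges $\D_i$ and $\D_{i+1}$ and preserves~\eqref{item: non commute}; and the coefficient of $\Id$ is finally forced, since both sides of~\eqref{item: non commute} annihilate the constant $1$ (because $\D_i(1)=\D_{i+1}(1)=0$), so the six coefficient differences sum to $0$ and the last vanishes once the other five do. The only difficulty throughout is bookkeeping; there is no conceptual obstruction.
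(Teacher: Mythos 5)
The paper gives no written proof of this lemma (it is asserted to be a direct consequence of the definitions), so your task is to supply the verification, and your proposal does so correctly in outline. Items (\ref{item: squared}), (\ref{item: commute}) and (\ref{item: applied}) are fine: the identity $\D_k\circ\D_k=-(c_k+\S_k(c_k))\D_k$ with $c_k+\S_k(c_k)=-(q+q^{-1})$, the disjoint-variable argument, and the factorisation of $\Psi_{()_n}$ as a constant times $(qz_i-q^{-1}z_{i+1})$ times an $\S_i$-invariant product are exactly the intended computations. (For (\ref{item: commute}) note that with the cyclic convention $\D_{2n}$ acts on $z_{2n},z_1$, so the pair $\{i,j\}=\{1,2n\}$ must be excluded, as in the Temperley--Lieb condition $2\le|i-j|\le 2n-2$; your disjointness claim, and indeed the identity itself, fails for that pair, but this is an imprecision of the statement rather than of your argument.) Your reduction of (\ref{item: non commute}) to the braid relation for $T_k=\D_k-q^{-1}\Id$ is correct: the quadratic $T_k^2=(q-q^{-1})T_k+\Id$ is equivalent to (\ref{item: squared}), and expanding does collapse the difference of the two sides to $(T_{i+1}T_iT_{i+1}-T_iT_{i+1}T_i)+(T_{i+1}-T_i)-(\D_{i+1}-\D_i)$, so (\ref{item: non commute}) is precisely $T_iT_{i+1}T_i=T_{i+1}T_iT_{i+1}$. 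This is a nice structural organisation that the paper does not spell out.

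One step is wrong as written: the symmetry $x\leftrightarrow w$ does \emph{not} interchange $\D_i$ and $\D_{i+1}$. Conjugating $\D_i$ by the swap $z_i\leftrightarrow z_{i+2}$ produces $\frac{qz_{i+2}-q^{-1}z_{i+1}}{z_{i+1}-z_{i+2}}(\S_{i+1}-\Id)$, which is $\D_{i+1}$ with $q$ replaced by $q^{-1}$, not $\D_{i+1}$ itself (the operators are not symmetric in $z_k,z_{k+1}$). To transfer the vanishing of the $\S_i$-coefficient to the $\S_{i+1}$-coefficient you must combine $x\leftrightarrow w$ with $q\mapsto q^{-1}$; this is harmless, since the coefficient identities live in $\Q(q)(x,y,w)$ and $q\mapsto q^{-1}$ is a field automorphism under which the desired relation is stable, but it has to be said --- or simply compute the $\S_{i+1}$-coefficient directly, which costs no more than the $\S_i$ one. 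With that repair, together with your (correct) observations that the length-$\ge 2$ coefficients of the two triple products agree and that the $\Id$-coefficient is forced because both sides of (\ref{item: non commute}) annihilate constants, the verification of (\ref{item: non commute}) is complete.
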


In the following we write $\prod_{i=1}^n \D_i$ for the product $\D_1 \circ \ldots \circ \D_n$.
Let $\pi$ be a noncrossing matching with corresponding Young diagram $\lambda(\pi)=(\lambda_1, \ldots, \lambda_l)$, i.\,e., $\lambda_i$ is the number of boxes of $\lambda(\pi)$ in the $i$-th row from top. We define the wheel polynomial $D_\pi$ by the following algorithm. First write in every box of $\lambda(\pi)$ the number of the diagonal the box lies on. The wheel polynomial $D_\pi$ is then constructed recursively by ``reading'' in the Young diagram $\lambda(\pi)$ the rows from top to bottom and in the rows all boxes from left to right and apply $\D_{\textit{number in the box}}$ to the previous wheel polynomial, starting with $\Psi_{()_n}$, which is defined in \eqref{eq: Psi_null}.
 For $\pi$ as in Figure \ref{fig: D pi graphical} we obtain
\[
D_\pi=\left(\D_{n-3} \circ \D_{n-2} \circ \D_{n} \circ \D_{n-1} \circ \D_{n+3} \circ \D_{n+2} \circ \D_{n+1} \circ \D_{n}\right) (\Psi_{()_n}).
\]

Alternatively we can write $D_\pi$ directly as
\begin{equation}
\label{eq: defi of new basis}
 D_\pi=\left( \prod_{i=1}^l\prod_{j=1}^{\lambda_{l+1-i}} \D_{n+(i-l)+(\lambda_{l+1-i}-j)}\right)(\Psi_{()_n}).
\end{equation}

\begin{figure}
 \centering
 \includegraphics[width=0.25\textwidth]{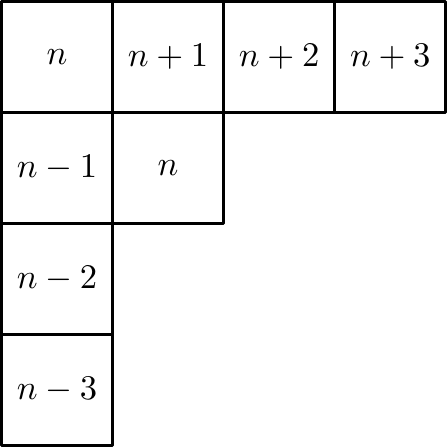}
 \caption{\label{fig: D pi graphical} The numbers indicate the labels of the diagonals the boxes lie on.}
\end{figure}

\begin{thm}
 \label{thm: new basis}
 The set of wheel polynomials $\{D_\pi| \pi \in \NC_n\}$ is a $\Q(q)$-basis of $W_n[z]$. Further $\Psi_\pi$ is for $\pi \in \NC_n$ a linear combination of $D_\tau$'s with $\tau \leq \pi$ and the coefficient of $D_\pi$ is $1$.
\end{thm}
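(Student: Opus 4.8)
The plan is to prove, by induction on the number $|\lambda(\pi)|$ of boxes, the equivalent sharper statement that
\[
 D_\pi=\Psi_\pi+\sum_{\tau<\pi}c_\tau\,\Psi_\tau\qquad\text{for suitable }c_\tau\in\Q(q).
\]
This yields the theorem at once: since $\{\Psi_\pi:\pi\in\NC_n\}$ is a basis of $W_n[z]$ by Theorem~\ref{thm: psi basis for wheel poly}, the matrix expressing the $D_\pi$ in this basis is, relative to any linear extension of the order $\leq$, lower triangular with $1$'s on the diagonal, hence invertible, so $\{D_\pi:\pi\in\NC_n\}$ is also a basis; and the inverse of a matrix that is triangular with respect to $\leq$ with unit diagonal is again of this form, which gives the asserted expansion of $\Psi_\pi$ as a combination of $D_\tau$ with $\tau\leq\pi$ in which $D_\pi$ occurs with coefficient $1$. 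The base case $\lambda(\pi)=\emptyset$ is $D_{()_n}=\Psi_{()_n}$, the empty product in \eqref{eq: defi of new basis}. For the inductive step let $b$ be the box of $\lambda(\pi)$ that the algorithm of \eqref{eq: defi of new basis} reads last, namely the rightmost box of the bottom row, and let $j$ be the diagonal it lies on. One checks that $2\leq j\leq 2n-2$, that the diagram obtained by deleting $b$ from $\lambda(\pi)$ equals $\lambda(\sigma)$ for a noncrossing matching $\sigma$ with $\sigma\nearrow_j\pi$, and that \eqref{eq: defi of new basis} yields $D_\pi=\D_j(D_\sigma)$.

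By the induction hypothesis $D_\sigma=\Psi_\sigma+\sum_{\mu<\sigma}c_\mu\Psi_\mu$, so
\[
 D_\pi=\D_j(\Psi_\sigma)+\sum_{\mu<\sigma}c_\mu\,\D_j(\Psi_\mu),
\]
and it remains to control each $\D_j(\Psi_\mu)$ with $\mu\leq\sigma$. The term $\D_j(\Psi_\sigma)$ is given directly by \eqref{eq: recursion of wheel poly}: it equals $\Psi_\pi+\sum_{\tau\in e_j^{-1}(\sigma)\setminus\{\sigma,\pi\}}\Psi_\tau$, and by Lemma~\ref{lem: temperley lieb operator for Young}(1) together with $\sigma<\pi$ each $\tau$ in this sum satisfies $\tau<\pi$. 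For $\mu<\sigma$ one reads off from \eqref{eq: Rmatrix applied on Psi reformulated} --- rewriting $\D_j(\Psi_\mu)=(q+q^{-1})\Psi_\mu+(e_j\Psi_n)_\mu$ and invoking Lemma~\ref{lem: temperley lieb operator for Young}(1) (the $\mu$-entry of $e_j\Psi_n$ involves only $\Psi_\nu$ with $\nu\in e_j^{-1}(\mu)$, and $e_j^{-1}(\mu)=\emptyset$ unless $\mu$ has an arch joining $j$ and $j+1$) --- that $\D_j(\Psi_\mu)$ is a $\Q(q)$-combination of $\Psi_\nu$ with $\nu\leq\mu$, together with at most one extra term $\Psi_{\pi_\mu}$ in the case $\mu\nearrow_j\pi_\mu$. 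Here the key combinatorial fact is $\pi_\mu\leq\pi$: the boxes of a Young diagram on a fixed diagonal form a contiguous top segment, so $\mu\subseteq\sigma$ forces $\mu$ to have at most as many boxes on diagonal $j$ as $\sigma$, and hence the box added to $\mu$ on diagonal $j$ either already belongs to $\sigma$ or is precisely the box added to $\sigma$ to form $\pi$; in both cases it lies in $\pi$. Moreover $|\pi_\mu|=|\mu|+1\leq|\sigma|<|\pi|$, so $\pi_\mu\neq\pi$. Therefore every $\D_j(\Psi_\mu)$ with $\mu<\sigma$ lies in $\operatorname{span}\{\Psi_\nu:\nu\leq\pi\}$ with zero coefficient on $\Psi_\pi$, while $\D_j(\Psi_\sigma)$ contributes $\Psi_\pi$ with coefficient $1$ and otherwise only $\Psi_\tau$ with $\tau<\pi$. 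Summing, $D_\pi=\Psi_\pi+\sum_{\nu<\pi}(\cdots)\Psi_\nu$, completing the induction.

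The step I expect to require the most care is the precise description of how each operator $\D_j$ acts on the $\Psi$-basis --- namely that $\D_j(\Psi_\mu)$ is a combination of $\Psi_\nu$ with $\nu\leq\mu$ plus possibly $\Psi_{\pi_\mu}$ with coefficient $1$ when $\mu\nearrow_j\pi_\mu$. The case where $\mu$ has an addable box on diagonal $j$ is \eqref{eq: recursion of wheel poly} itself; for the complementary case one must combine \eqref{eq: Rmatrix applied on Psi reformulated} with Lemma~\ref{lem: temperley lieb operator for Young}(1) to see that $\D_j(\Psi_\mu)\in\operatorname{span}\{\Psi_\nu:\nu\leq\mu\}$, and one also needs the (elementary) monotonicity of ``adjoin the addable box on diagonal $j$'' with respect to inclusion of Young diagrams, which is what yields $\pi_\mu\leq\pi$ in the inductive step. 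Once these are established the argument above goes through and Theorem~\ref{thm: new basis} follows.
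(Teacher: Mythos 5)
Your proof is correct, but it takes a genuinely different route from the paper's. The paper proves the expansion of $\Psi_\pi$ in the $D$-basis directly, by induction on $|\lambda(\pi)|$ via the defining recursion \eqref{eq: recursion of wheel poly}; all the technical work is concentrated in Lemma~\ref{lem: d-operator applied on new basis}, which describes $\D_i(D_\sigma)$ through a case analysis resting only on the operator identities of Lemma~\ref{lem: properties of D-operators}, and the basis property then follows from a dimension count. You instead establish the inverse, unitriangular expansion $D_\pi=\Psi_\pi+\sum_{\tau<\pi}c_\tau\Psi_\tau$ (using $D_\pi=\D_j(D_\sigma)$ for the rightmost box of the bottom row, which you identify correctly, together with $2\le j\le 2n-2$) and then invert the triangular transition matrix. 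Your key ingredient is the componentwise form of \eqref{eq: Rmatrix applied on Psi reformulated}: $\D_j(\Psi_\mu)=(q+q^{-1})\Psi_\mu$ when $\mu$ has no little arch at $(j,j+1)$, and $\D_j(\Psi_\mu)=\sum_{\nu\in e_j^{-1}(\mu)\setminus\{\mu\}}\Psi_\nu$ otherwise, combined with Lemma~\ref{lem: temperley lieb operator for Young}(1) and the (correct) monotonicity argument showing $\pi_\mu\le\pi$ and $\pi_\mu\neq\pi$ for $\mu<\sigma$. What this buys: you bypass Lemma~\ref{lem: d-operator applied on new basis} and its braid-relation bookkeeping entirely. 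What it costs: you rely on $\Psi_n$ satisfying \eqref{eq: Rmatrix applied on Psi} at \emph{every} component --- including components with a little arch at $(j,j+1)$ but no addable box on diagonal $j$, and components without the little arch --- a fact the paper asserts from \cite{Zinn-Justin-Habil} but explicitly verifies only for components with an addable box, whereas the paper's route needs nothing beyond the recursion \eqref{eq: recursion of wheel poly} and the $\D_k$-identities. One small point of care: in reading off the $\mu$-entry of $e_j\Psi_n$ you must use the Temperley--Lieb module action with loop weight $\tau=-(q+q^{-1})$ (as in the paper's own displayed component computation), not the literal ``permutation'' phrasing of the action; with that reading your identities are exactly right, and there is no circularity since the exchange relation is established independently of Theorem~\ref{thm: new basis}.
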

\begin{proof}
 We prove the second statement by induction on the number of boxes of $\lambda(\pi)$. It is by definition true for $()_n$, hence let the number $|\lambda(\pi)|$ be non-zero. Let $\sigma$ be the noncrossing matching such that $\lambda(\sigma)$ is the Young diagram one obtains by deleting the rightmost box in the bottom row of $\lambda(\pi)$, and let $i$ be the integer such that $\sigma \nearrow_i \pi$. Then Theorem \ref{thm: psi basis for wheel poly} states
 \[
  \Psi_\pi=\D_i \Psi_\sigma -\sum_{\tau \in e_i^{-1}(\sigma)\setminus\{\sigma,\pi\}}\Psi_\tau.
 \]
 We use the induction hypothesis to express $\Psi_\tau$ and $\Psi_\sigma$ as sums of $D_{\tau^\prime}$ with $\tau^\prime\leq \tau<\pi$ or $D_{\sigma^\prime}$ with $\sigma^\prime \leq\sigma<\pi$ respectively. The coefficient of $D_\sigma$ in $\Psi_\sigma$ is by the induction hypothesis equals to $1$. 
 Since all $\sigma^\prime\leq \sigma$ satisfy the requirements of Lemma \ref{lem: d-operator applied on new basis}, this lemma implies the statement. By above arguments the set $\{D_\pi|\pi \in \NC_n\}$ is a  $\Q(q)$-generating set for $W_n[z]$ of cardinality $\dim_{\Q(q)}(W_n[z])$, hence it is also a $\Q(q)$-basis.
\end{proof}

The next lemma contains the technicalities which are needed to prove the above theorem.

\begin{lem}
 \label{lem: d-operator applied on new basis}
 Let $1 < i <2n$ and $\sigma \in \NC_n$ such that the number of boxes on the $i$-th diagonal of $\lambda(\sigma)$ is less than the maximal possible number of boxes that can be placed there.
 Then $\D_i (D_\sigma)=D_\pi$ iff there exists a $\pi \in \NC_n$ with $\sigma \nearrow_i \pi$ or otherwise $\D_i (D_\sigma)$ is a $\Q(q)$-linear combination of $D_\tau$'s with $\tau \leq \sigma$.
\end{lem}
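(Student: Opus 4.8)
The plan is to expose the internal structure of $D_\sigma$ and to track $\D_i$ as it is commuted into the defining product. Writing $\lambda(\sigma)=(\lambda_1,\dots,\lambda_l)$, for each row $r$ let $B_r=\D_{n+\lambda_r-r}\circ\cdots\circ\D_{n+1-r}$ be the block of operators read off row $r$; then by definition $D_\sigma=B_l\circ B_{l-1}\circ\cdots\circ B_1(\Psi_{()_n})$. I will repeatedly use two facts: by Lemma~\ref{lem: properties of D-operators}(\ref{item: commute}), $\D_i$ commutes with every operator occurring in $B_r$ as soon as $i<n-r$ or $i>n+\lambda_r-r+1$; and by Lemma~\ref{lem: properties of D-operators}(\ref{item: applied}), since $1<i<2n$, one has $\D_i(\Psi_{()_n})=(q+q^{-1})\Psi_{()_n}$ unless $i=n$.

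\textbf{The case $\sigma\nearrow_i\pi$.} Suppose there is a $\pi\in\NC_n$ with $\sigma\nearrow_i\pi$. The new box is the last box of some row $r_0$ of $\lambda(\pi)$ and lies on diagonal $i=n+\lambda_{r_0}(\pi)-r_0$. For every $r>r_0$ one has $\lambda_{r_0}(\sigma)-\lambda_r(\sigma)\ge 0>r_0-r$, hence $i>n+\lambda_r(\sigma)-r+1$, so $\D_i$ commutes past $B_l,\dots,B_{r_0+1}$ and lands at the left end of $B_{r_0}$. But $\D_i\circ B_{r_0}$ is precisely the block read off row $r_0$ of $\lambda(\pi)$, while $\sigma$ and $\pi$ agree in all other rows; hence $\D_i(D_\sigma)=D_\pi$. (If the new box forms a new bottom row then $r_0=l+1$ and this is immediate.) Only part~(\ref{item: commute}) of Lemma~\ref{lem: properties of D-operators} and the definition of $D_\pi$ enter here.

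\textbf{The remaining case.} Now suppose no such $\pi$ exists; I argue by induction on $|\lambda(\sigma)|$, the base case $\sigma=()_n$ being $\D_i(\Psi_{()_n})=(q+q^{-1})\Psi_{()_n}$ (here $i=n$ cannot occur, as $()_n\nearrow_n(1)$). In the step, let $r_1$ be the largest $r$ for which $\D_i$ does not commute past $B_r$; if there is none then $i\neq n$ and $\D_i(D_\sigma)=(q+q^{-1})D_\sigma$, so assume $r_1$ exists and write
\begin{equation*}
 \D_i(D_\sigma)=B_l\circ\cdots\circ B_{r_1+1}\circ\D_i\circ B_{r_1}\circ\cdots\circ B_1(\Psi_{()_n}).
\end{equation*}
A short combinatorial check --- this is where the hypothesis that diagonal $i$ is not yet filled to capacity in $\lambda(\sigma)$ is used --- shows that the absence of an addable box on diagonal $i$ forces one of exactly two local situations at $B_{r_1}$: either diagonal $i$ already carries a box of row $r_1$, or $i=n+\lambda_{r_1}-r_1+1$ while rows $r_1-1$ and $r_1$ have equal length. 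In either situation I use commutations to create a subword $\D_i\D_{i\pm1}\D_i$ and rewrite it by Lemma~\ref{lem: properties of D-operators}(\ref{item: non commute}) as $\D_{i\pm1}\D_i\D_{i\pm1}+\D_i-\D_{i\pm1}$. The two short summands produce words with one operator fewer than that of $D_\sigma$, and I would verify that, after re-ordering by commutations, each of them is the reading word of a Young diagram contained in $\lambda(\sigma)$, hence equals $D_{\sigma'}$ with $\sigma'<\sigma$ (or otherwise falls under the induction hypothesis). The long summand, of unchanged length, is treated by iterating braid moves: the offending adjacency is pushed step by step towards $\Psi_{()_n}$, where it is eventually absorbed either by $\D_i\circ\D_i=(q+q^{-1})\D_i$ into a multiple of $D_\sigma$, or by $\D_j(\Psi_{()_n})=(q+q^{-1})\Psi_{()_n}$ with $j\neq n$, leaving in all cases a multiple of some $D_\tau$ with $\tau\le\sigma$. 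Collecting all contributions gives the asserted $\Q(q)$-linear combination.

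\textbf{Main obstacle.} The genuinely laborious part is the bookkeeping just described: verifying, in each of the two local situations and after each braid move in the cascade, that the shorter words spun off really are (after commutations) reading words of subdiagrams of $\lambda(\sigma)$ --- so that the partial-order bound $\tau\le\sigma$ is maintained throughout --- and that the cascade on the long summand terminates. The remainder is just the commutation calculus of the operators $\D_i$ together with the definition of $D_\pi$.
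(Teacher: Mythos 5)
Your first half is fine: the block decomposition $D_\sigma=B_l\circ\cdots\circ B_1(\Psi_{()_n})$ and the commutation estimate $i>n+\lambda_r(\sigma)-r+1$ for $r>r_0$ correctly give $\D_i(D_\sigma)=D_\pi$ whenever $\sigma\nearrow_i\pi$, and your dichotomy for the non-addable case (diagonal $i$ occurs in row $r_1$, or $i=n+\lambda_{r_1}-r_1+1$ with $\lambda_{r_1-1}=\lambda_{r_1}$) is correct and correctly locates where the capacity hypothesis is needed. The problem is that the heart of the lemma --- that in the non-addable case the result is a $\Q(q)$-combination of $D_\tau$ with $\tau\le\sigma$ --- is not actually proved: it is replaced by the plan you yourself flag as the ``main obstacle''. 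Two concrete points. First, the claim that the short summands produced by a braid move are ``after re-ordering by commutations, reading words of Young diagrams contained in $\lambda(\sigma)$'' is false in general: in your second situation the short terms correspond to deleting the last boxes of rows $r_1-1$ and $r_1$, and if $\lambda_{r_1+1}=\lambda_{r_1}$ the resulting row lengths are no longer weakly decreasing, so the word is not a reading word of any partition; the escape clause ``or otherwise falls under the induction hypothesis'' is exactly the unexecuted bookkeeping (one must re-apply the lemma letter by letter, checking its capacity side-condition each time and that the added boxes stay inside $\lambda(\sigma)$). Second, your treatment of the long summand by an iterated cascade of braid moves ``pushed towards $\Psi_{()_n}$'' is a new mechanism whose termination and whose preservation of the bound $\tau\le\sigma$ at every step are nowhere argued; as written it is a proof sketch, not a proof.

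For comparison, the paper avoids the cascade entirely by cutting at the lowest box $X$ on diagonal $i$ rather than at whole rows: with $\sigma'$ the subdiagram weakly above and to the left of $X$ it writes $D_\sigma=\bigl(\prod_l\D_{\beta_l}\circ\D_{i-1}^{\,b}\circ\prod_l\D_{\alpha_l}\circ\D_{i+1}^{\,a}\circ\D_i\bigr)(D_{\sigma'})$ with $a,b\in\{0,1\}$, so that a single application of $\D_i\circ\D_{i\pm1}\circ\D_i=\D_{i\pm1}\circ\D_i\circ\D_{i\pm1}+\D_i-\D_{i\pm1}$ (or of $\D_i^2=(q+q^{-1})\D_i$ when $a=b=0$) suffices, after which the induction hypothesis is applied to the strictly smaller $\sigma'$ and then once more to the resulting $D_\tau$'s under the remaining $\D_{\alpha_l},\D_{\beta_l}$. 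If you want to salvage your row-block organisation you must either supply the full bookkeeping for your cascade, or switch to a decomposition of this kind in which one braid move plus the induction hypothesis on a strictly smaller diagram does all the work. Also note a small imprecision: in your first situation with no box on diagonal $i+1$ to the right of the lowest diagonal-$i$ box, no subword $\D_i\D_{i\pm1}\D_i$ arises at all; there $\D_i$ meets the $\D_i$ of that box and one concludes directly via $\D_i^2=(q+q^{-1})\D_i$.
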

\begin{proof}
 We use induction on the number of boxes of $\lambda(\sigma)$. We say that $i$ appears in $\sigma$ if there is a box in $\lambda(\sigma)$ which lies on the $i$-th diagonal.
 
 \begin{enumerate}
  \item Assume that $i$ does not appear in $\sigma$. This implies that $i-1$ can  not appear in $\sigma$. Then there are two cases:
  \begin{enumerate}
   \item   First $i+1$ does not appear in $\sigma$. By Lemma \ref{lem: properties of D-operators} $\D_i$ commutes with all the $\D$-operators appearing in $D_\sigma$. If $i \neq n$ Lemma \ref{lem: properties of D-operators} states $\D_i (\Psi_{()_n})= (q+q^{-1})\Psi_{()_n}$ and hence $\D_i (D_\sigma)=(q+q^{-1})D_\sigma$. The case $i =n$ implies $\sigma=()_n$ and hence $\D_i(D_\sigma)=D_{(()())_{n-2}}$. 
   \item In the second case $i+1$ appears in $\sigma$. Then there is only one box on the $(i+1)$-th diagonal. This box is the leftmost box of the bottom row of $\lambda(\sigma)$. Let $\pi$ be the noncrossing matching whose corresponding Young diagram is obtained by adding a box in a new row in $\lambda(\sigma)$, i.\,e., $\sigma \nearrow_i \pi$. By definition holds $D_\pi=\D_i(D_\sigma)$.
  \end{enumerate}
  
  \item  Let $i$ appear in $\sigma$. We consider the lowest box in the $i$-th diagonal and call it $X$. Let $\sigma^\prime$ be the noncrossing matching of size $n$ whose corresponding Young diagram $\lambda(\sigma^\prime)$ consists of all boxes above and to the left of the box $X$, denote by $\alpha_i$ with $1 \leq i \leq A$ the boxes to the right of $X$ and in the row below but excluding the boxes in the $(i+1)$-th and $(i-1)$-th diagonal and by $\beta_i$ with $1 \leq i \leq B$ the remaining boxes at the bottom. A schematic picture is given in Figure \ref{fig: D pi splitting}.
  Using the previous definitions we can write $D_\sigma$ as 
 \begin{align}
  D_\sigma=\left(\prod_{l=1}^B\D_{\beta_l} \circ \D_{i-1}^b \circ \prod_{l=1}^A\D_{\alpha_l} \circ \D_{i+1}^a \circ \D_i\right) (D_{\sigma^\prime}),
 \end{align}
 
where $a,b$ are $0$ or $1$.

\begin{figure}
 \centering
 \includegraphics[width=0.5\textwidth]{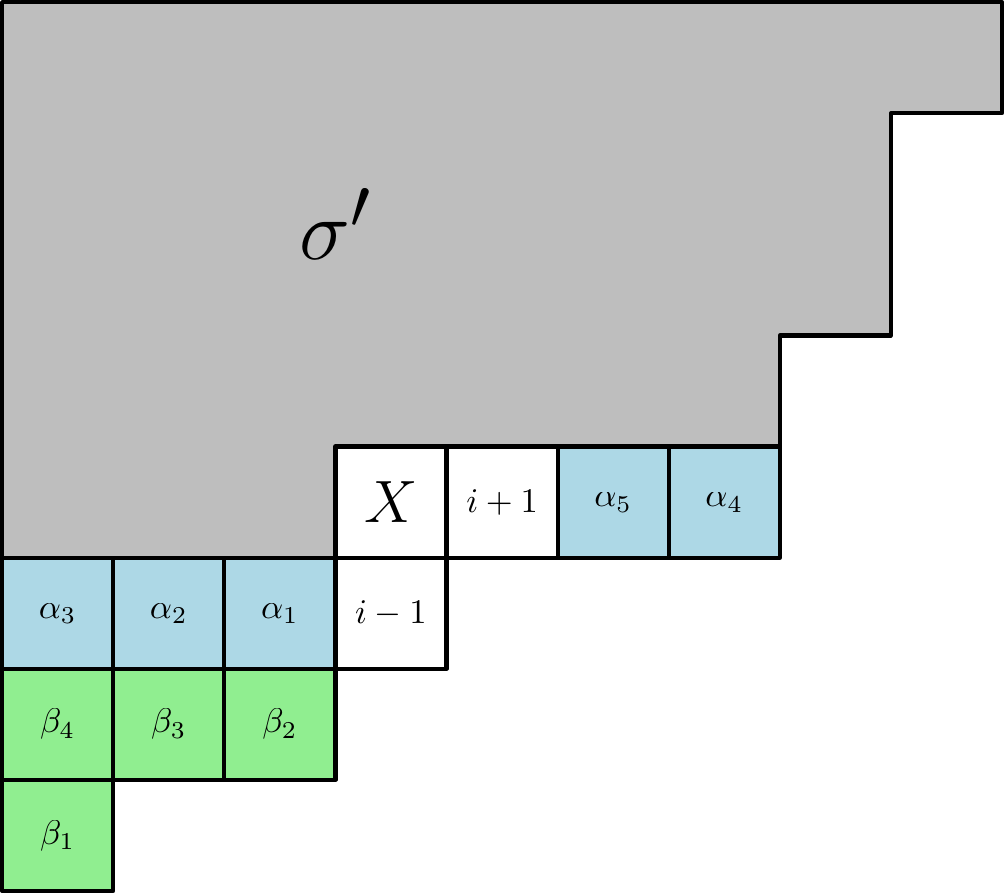}
 \caption{\label{fig: D pi splitting} Schematic representation of $\lambda(\sigma)$ for $\sigma$ as in the second case of the proof of Lemma \ref{lem: d-operator applied on new basis} with $a=b=1$.}
\end{figure}

\begin{enumerate}[(a)]
 \item If $a=b=0$ Lemma \ref{lem: properties of D-operators} (\ref{item: squared},\ref{item: commute}) implies
 \begin{align*}
  \D_i D_\sigma&=
  \D_i \left( \prod_{l=1}^B\D_{\beta_l} \circ \prod_{l=1}^A\D_{\alpha_l} \circ \D_i\right)( D_{\sigma^\prime})\\
  &=\left(\prod_{l=1}^B\D_{\beta_l} \circ \prod_{l=1}^A\D_{\alpha_l} \circ \D_i^2  \right)( D_{\sigma^\prime})\\
  &=\left(\prod_{l=1}^B\D_{\beta_l} \circ \prod_{l=1}^A\D_{\alpha_l} \circ ((q+q^{-1})\D_i)\right)( D_{\sigma^\prime}) =(q+q^{-1})D_\sigma.
 \end{align*}

 \item For $a=b=1$, the operator $\D_i$ commutes with all $\D_{\beta_l}$. As Figure \ref{fig: D pi splitting} shows and by the assumptions on $\sigma$ there exists a noncrossing matching $\pi$ with $\sigma \nearrow_i \pi$. Hence one has 
 \[
  \D_i(D_\sigma) =
 \left(\prod_{l=1}^B\D_{\beta_l}\circ  \D_i \circ \D_{i-1} \circ \prod_{l=1}^A\D_{\alpha_l} \circ \D_{i+1} \circ \D_i\right) (D_{\sigma^\prime})
  =D_\pi.
 \] 
 \item For $a=1,b=0$ we obtain by Lemma \ref{lem: properties of D-operators} (\ref{item: commute},\ref{item: non commute})
 \begin{align*}
 \D_i(D_\sigma)&= \left(\prod_{l=1}^B \D_{\beta_l} \circ \prod_{l=1}^A \D_{\alpha_l} \circ  \D_i \circ \D_{i+1} \circ \D_i\right)( D_{\sigma^\prime}) \\
 &=\left(\prod_{l=1}^B\D_{\beta_l} \circ \prod_{l=1}^A \D_{\alpha_l} \right) \left( (\D_{i+1} \circ \D_i \circ \D_{i+1} +\D_i -\D_{i+1})(D_{\sigma^\prime})\right).
 \end{align*}
 By the induction hypothesis $\left( (\D_{i+1} \circ \D_i \circ \D_{i+1} +\D_i -\D_{i+1})(D_{\sigma^\prime})\right)$ is a linear combination of $D_\tau$'s with $\tau \leq \hat\sigma$ where $\hat\sigma$ is $\sigma^\prime$ with a box added on the $i$-th and $i+1$-th diagonal. Using again the induction hypothesis for the $D_\tau$'s with $\tau \leq \hat\sigma$ proofs the claim.
 \item Let $a=0,b=1$ and let $\hat{\sigma}$ be the noncrossing matching whose Young diagram consists of $\lambda(\sigma^\prime)$ and the boxes labelled with $\alpha_i$ for $1 \leq i \leq A$. Lemma \ref{lem: properties of D-operators} (\ref{item: commute},\ref{item: non commute}) implies
  \begin{align*}
 \D_i(D_\sigma)&= \left(\prod_{l=1}^B \D_{\beta_l} \circ  \D_i \circ \D_{i-1} \circ \D_i \circ \prod_{l=1}^A \D_{\alpha_l} \right)( D_{\sigma^\prime}) \\
 &=\left(\prod_{l=1}^B\D_{\beta_l} \circ (\D_{i-1} \circ \D_i \circ \D_{i-1} +\D_i -\D_{i-1}) \right) (D_{\hat\sigma}).
 \end{align*}
We finish the proof by using the induction hypothesis analogously to the above case.
 \qedhere
\end{enumerate}
\end{enumerate}
\end{proof}

Let $\pi \in \NC_n$ be a noncrossing matching given by $\pi=\pi_1\pi_2$ where $\pi_i$ is a noncrossing matching of size $n_i$ for $i=1,2$. We want to generalise $D_\pi$ and Theorem \ref{thm: new basis} in the sense that we can write $\Psi_{\pi}=\Psi_{\pi_1\pi_2}$ as a linear combination of $D_{\tau_1,\tau_2}$ with $\tau_i\leq \pi_i$ for $i=1,2$. This will not be possible for $\Psi_{\pi}$ but for $\Psi_{\rho^{n_2}\pi}$. Let the Young diagram corresponding to $\pi_2$ be given as $\lambda(\pi_2)=(\lambda_1,\ldots, \lambda_l)$.
The wheel polynomial $D_{\pi_1,\pi_2}$ is then defined by the following algorithm. First we write in every box of $\lambda(\pi_2)$ the number of the diagonal the box lies on. The wheel polynomial $D_{\pi_1\pi_2}$ is then constructed recursively by ``reading'' in the Young diagram $\lambda(\pi_2)$ the rows from top to bottom and in the rows all boxes from left to right and apply $\D_{\textit{number in the box}-n}$ to the previous wheel polynomial, starting with $D_{(\pi_1)_{n_2}}$, which is defined in \eqref{eq: defi of new basis}.
Remember that we have extended the definition of $\D_k$ to all integers via $\D_k=\D_{k+2n}$.
We can express $D_{\pi_1,\pi_2}$ also by the following formula
\[
 D_{\pi_1,\pi_2}:=\left( \prod_{i=1}^l\prod_{j=1}^{\lambda_{l+1-i}} \D_{(i-l)+(\lambda_{l+1-i}-j)}\right)(D_{(\pi_1)_{n_2}}).
\]

 For $\pi_2$ as in Figure \ref{fig: D pi graphical} we obtain
\[
D_{\pi_1,\pi_2}=\left(\D_{-3} \circ \D_{-2} \circ \D_{0} \circ \D_{-1} \circ \D_{3} \circ \D_{2} \circ \D_{1} \circ \D_{0}\right) (D_{(\pi_1)_{n_2}}).
\]

\begin{thm}
 \label{thm: new basis for splitted matchings}
 Let $\pi=\pi_1\pi_2, \pi_1,\pi_2$ be noncrossing matching of size $n,n_1$ or $n_2$ respectively and set $q=e^{\frac{2\pi i}{3}}$. The wheel polynomial 
 \[
 \Psi_{\rho^{n_2}(\pi_1\pi_2)}(z_1,\ldots,z_{2n})= \Psi_{\pi_1\pi_2}(z_{2n+1-n_2},\ldots,z_{2n},z_1,\ldots, z_{2n-n_2})
\] 
  can be expressed as a linear combination of $D_{\tau_1,\tau_2}$'s where $\tau_i \leq \pi_i$ and the coefficient of $D_{\pi_1,\pi_2}$ is $1$.
\end{thm}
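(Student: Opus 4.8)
The plan is to prove the statement by induction on the number $|\lambda(\pi_2)|$ of boxes of $\lambda(\pi_2)$, closely following the scheme of the proof of Theorem \ref{thm: new basis}, with Theorem \ref{thm: new basis} itself supplying the base case. For $\lambda(\pi_2)=\emptyset$, i.e. $\pi_2=()_{n_2}$, one checks that rotating the concatenation moves the $n_2$ outer arches of $()_{n_2}$ symmetrically around $\pi_1$, so that $\rho^{n_2}(\pi_1()_{n_2})=(\pi_1)_{n_2}$, whence $\Psi_{\rho^{n_2}(\pi_1\pi_2)}=\Psi_{(\pi_1)_{n_2}}$. By Theorem \ref{thm: new basis} this is a $\Q(q)$-linear combination of $D_\tau$ with $\tau\le(\pi_1)_{n_2}$, the coefficient of $D_{(\pi_1)_{n_2}}$ being $1$. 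Since $\lambda((\pi_1)_{n_2})=\lambda(\pi_1)$ fits inside the staircase of $\NC_{n_1}$, every $\tau\in\NC_n$ with $\lambda(\tau)\subseteq\lambda(\pi_1)$ has the form $\tau=(\tau_1)_{n_2}$ for a unique $\tau_1\in\NC_{n_1}$ with $\tau_1\le\pi_1$, and $D_\tau=D_{(\tau_1)_{n_2}}=D_{\tau_1,()_{n_2}}=D_{\tau_1,\pi_2}$; this settles the base case.

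For the inductive step let $\sigma_2$ be obtained from $\pi_2$ by deleting the rightmost box in the bottom row of $\lambda(\pi_2)$, so $\sigma_2\nearrow_i\pi_2$ in $\NC_{n_2}$ for some $2\le i\le 2n_2-2$. One first verifies the bookkeeping fact that $\rho^{n_2}(\pi_1\sigma_2)\nearrow_j\rho^{n_2}(\pi_1\pi_2)$ holds in $\NC_n$, where $\D_j$ ($j\equiv i-n_2$, recalling $\D_k=\D_{k+2n}$) is exactly the operator that the construction of $D_{\pi_1,\pi_2}$ applies to $D_{(\pi_1)_{n_2}}$ for this box; note $\D_j$ may be $\D_{2n}$, which preserves $W_n[z]$ because $q=e^{2\pi i/3}$. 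Granting this, \eqref{eq: recursion of wheel poly} gives
\[
\Psi_{\rho^{n_2}(\pi_1\pi_2)}=\D_j\bigl(\Psi_{\rho^{n_2}(\pi_1\sigma_2)}\bigr)-\sum_{\tau\in e_j^{-1}(\rho^{n_2}(\pi_1\sigma_2))\setminus\{\rho^{n_2}(\pi_1\sigma_2),\,\rho^{n_2}(\pi_1\pi_2)\}}\Psi_\tau .
\]
Applying Lemma \ref{lem: temperley lieb operator for Young}(2), conjugated by $\rho^{n_2}$, identifies the preimage as $\{\rho^{n_2}(\pi_1\rho):\rho\in e_i^{-1}(\sigma_2)\}$, so the sum runs over $\rho\in e_i^{-1}(\sigma_2)\setminus\{\sigma_2,\pi_2\}$, and by Lemma \ref{lem: temperley lieb operator for Young}(1) each such $\rho$ satisfies $\rho<\sigma_2<\pi_2$. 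Now the induction hypothesis applies to $\Psi_{\rho^{n_2}(\pi_1\sigma_2)}$ (a combination of $D_{\tau_1,\tau_2}$ with $\tau_1\le\pi_1$, $\tau_2\le\sigma_2$, coefficient of $D_{\pi_1,\sigma_2}$ equal to $1$) and to each $\Psi_{\rho^{n_2}(\pi_1\rho)}$ (a combination of $D_{\tau_1,\tau_2}$ with $\tau_1\le\pi_1$, $\tau_2\le\rho<\pi_2$).

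It then remains to control $\D_j(D_{\tau_1,\tau_2})$ for $\tau_1\le\pi_1$, $\tau_2\le\sigma_2$, and I expect this to be the main obstacle. What is needed is a ``two-part'' analogue of Lemma \ref{lem: d-operator applied on new basis}: if the $i$-th diagonal of $\lambda(\tau_2)$ is not yet saturated, then $\D_j(D_{\tau_1,\tau_2})=D_{\tau_1,\tau_2'}$ with $\tau_2\nearrow_i\tau_2'$, and otherwise $\D_j(D_{\tau_1,\tau_2})$ is a $\Q(q)$-linear combination of $D_{\tau_1,\tau_2''}$ with $\tau_2''\le\tau_2$. Its proof should mirror that of Lemma \ref{lem: d-operator applied on new basis} almost verbatim — induction on $|\lambda(\tau_2)|$ with the case analysis according to whether $i$ and $i\pm1$ occur among the diagonals of $\lambda(\tau_2)$, using Lemma \ref{lem: properties of D-operators} — the point being that $D_{\tau_1,\tau_2}$ arises from $D_{(\tau_1)_{n_2}}$ by applying only the $\D$-operators indexed by the diagonals of $\lambda(\tau_2)$, so $\D_j$ never interferes with the $\lambda(\tau_1)$-part and one stays away from the degenerate indices $n,2n$ that force separate treatment in the original lemma. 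Combining everything: the contribution from $\tau_1=\pi_1$, $\tau_2=\sigma_2$ yields $D_{\pi_1,\pi_2}$ with coefficient $1$ (since a box is addable on diagonal $i$ of $\lambda(\sigma_2)$, forcing $\D_j(D_{\pi_1,\sigma_2})=D_{\pi_1,\pi_2}$), while every remaining $\D_j(D_{\tau_1,\tau_2})$ and the entire subtracted sum are combinations of $D_{\tau_1,\tau_2}$ with $\tau_1\le\pi_1$ and $\tau_2<\pi_2$; this is exactly the assertion of the theorem. Besides the two-part lemma, the other delicate ingredient is the rotation/diagonal bookkeeping flagged above — checking that $\rho^{n_2}$ turns a single box-addition deep inside $\pi_2$ into a single box-addition in the size-$n$ picture with the index matching the definition of $D_{\pi_1,\pi_2}$ — which is precisely where the specific rotation power $n_2$ and the identity $q^6=1$ enter.
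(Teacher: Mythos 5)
Your overall plan (induct on $|\lambda(\pi_2)|$, transport the recursion into the rotated picture, and control $\D_j(D_{\tau_1,\tau_2})$ by a two--part analogue of Lemma \ref{lem: d-operator applied on new basis}) is a reasonable alternative to the paper's argument, but as written it has a genuine gap at its central step. The ``bookkeeping fact'' $\rho^{n_2}(\pi_1\sigma_2)\nearrow_j\rho^{n_2}(\pi_1\pi_2)$ is false in general: rotation does not commute with the Young--diagram picture, because arches pushed past the point $2n$ change orientation, so a single box--addition inside $\pi_2$ does not become a single box--addition between the rotated matchings. Concretely, take $\pi_1=()$, $\sigma_2=()()$, $\pi_2=(())$, so $n=3$, $i=2$, $j\equiv 2n$: then $\rho^{2}(\pi_1\sigma_2)=()()()$ has the full staircase diagram $(2,1)$ while $\rho^{2}(\pi_1\pi_2)$ is the fully nested matching with empty diagram, so no relation $\nearrow_j$ holds (and $\nearrow_{2n}$ is not even defined, since $\nearrow_j$ requires $2\le j\le 2n-2$). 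Consequently you may not cite \eqref{eq: recursion of wheel poly} for the rotated pair. The displayed identity you want is in fact true at $q=e^{2\pi i/3}$, but its proof is to take the genuine recursion for $\pi_1\sigma_2\nearrow_{2n_1+i}\pi_1\pi_2$ and conjugate it by the cyclic shift of variables, using \eqref{eq: rotation of wheel poly} and $q^6=1$, under which $\D_{2n_1+i}$ becomes $\D_{i-n_2\bmod 2n}$ and the preimage is controlled by Lemma \ref{lem: temperley lieb operator for Young}(2). That conjugation-by-rotation is precisely the mechanism of the paper's proof (rotate once, identify $\Psi_{\pi_1()_{n_2}}$ with the shifted $\Psi_{(\pi_1)_{n_2}}$, run the recursion with all operator indices shifted, rotate back); so the step you dismiss as bookkeeping is the crux, and your literal formulation of it fails.

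The second deferred ingredient, the two--part analogue of Lemma \ref{lem: d-operator applied on new basis}, is the technical heart of your route and is only asserted; moreover the reason you give for expecting a verbatim proof is off. The operators attached to the $\pi_2$-part of $D_{\tau_1,\tau_2}$ are centered at $0\equiv 2n$ (indeed $\D_0=\D_{2n}$ already appears in the paper's example of $D_{\pi_1,\pi_2}$), so you do not ``stay away from the degenerate indices $n,2n$'': the index $2n$ takes over exactly the role that $n$ plays in the original lemma (it is the diagonal on which the first box of $\lambda(\tau_2)$ is created from $D_{(\tau_1)_{n_2}}$, where Lemma \ref{lem: properties of D-operators}(4) does not apply), and closure of $W_n[z]$ under $\D_{2n}$ needs $q=e^{2\pi i/3}$ --- you note the latter, but the corresponding case analysis still has to be carried out. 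The paper avoids both of these issues by applying Theorem \ref{thm: new basis} as a black box after the single rotation, with shifted operator indices and with initial polynomial $\Psi_{\pi_1()_{n_2}}$ in place of $\Psi_{()_n}$. Your plan could likely be completed along these lines, but both missing pieces require real work, and the first one is wrong as stated.
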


\begin{proof}
We calculate $\Psi_{\rho^{n_2}(\pi_1\pi_2)}$ in three steps:
\begin{enumerate}
\item $\Psi_{(\pi_1)_{n_2}}$ is by Theorem \ref{thm: new basis} a linear combination of $D_{(\tau_1)_{n_2}}$'s with $\tau_1 \leq \pi_1$ and the coefficient of $D_{(\pi_1)_{n_2}}$ is $1$.

\item Theorem \ref{thm: psi basis for wheel poly} implies 
\[
 \Psi_{\pi_1()_{n_2}} = \Psi_{\rho^{-n_2}((\pi_1)_{n_2})} = \Psi_{(\pi_1)_{n_2}}(z_{n_2+1},\ldots, z_{2n},z_1,\ldots,z_{n_2}).
\]

\item Use the recursion \eqref{eq: recursion of wheel poly} of Theorem \ref{thm: psi basis for wheel poly} to obtain $\Psi_{\pi_1\pi_2}$ starting from $\Psi_{\pi_1()_{n_2}}$. By Lemma \ref{lem: temperley lieb operator for Young} the $\tau$ appearing in the sum in \eqref{eq: recursion of wheel poly} are of the form $\pi_1\tau_2$ with $\tau_2 \leq \pi_2$.
\end{enumerate}

The algorithm for calculating $\Psi_{(\pi_2)_{n_1}}$ and the third step of calculating $\Psi_{\pi_1\pi_2}$ differ by the initial condition -- in the first case $\Psi_{()_n}$, in the second $\Psi_{\pi_1()_{n_2}}$ -- and each $\D_i$ of the first algorithm is replaced by $\D_{i+n_2}$.
Hence we can use Theorem \ref{thm: new basis} to express $\Psi_{\pi_1\pi_2}$ as a linear combination of $\hat D_{\tau_2}$ with $\tau_2 \leq \pi_2$, where $\hat D_{(\tau_2)_{n_1}}$ is obtained by taking $D_{(\tau_2)_{n_1}}$ and changing every $\D_i$ to a $\D_{i+n_2}$ and $\Psi_{()_n}$ is replaced by $\Psi_{\pi_1()_{n_2}}$. Together with the first two parts of the algorithm this implies that $\Psi_{\rho^{n_2}(\pi_1\pi_2)}$ is a linear combination of $D_{\tau_1,\tau_2}$'s with $\tau_i \leq \pi_i$ and the coefficient of $D_{\pi_1,\pi_2}$ is $1$.
\end{proof}

\begin{rem}
Let $\Psi_{\pi_i}=\sum_{\tau_i \leq \pi_i}\alpha_{\tau_i}D_{\tau_i}$ for $i=1,2$. The above proof implies
\begin{align*}
\Psi_{\rho^{n_2}(\pi_1\pi_2)}=\sum_{\tau_1 \leq \pi_1,\tau_2 \leq \pi_2}\alpha_{\tau_1}\alpha_{\tau_2}D_{\tau_1,\tau_2}.
\end{align*}
Hence gaining knowledge about 
\[
 A_{\pi_1\pi_2}=\Psi_{\pi_1\pi_2}|_{z_1=\ldots=z_{2n}=1,q^3=1}=\Psi_{\rho^{n_2}(\pi_1\pi_2)}|_{z_1=\ldots=z_{2n}=1,q^3=1}
\]
could be achieved by understanding the coefficients $\alpha_{\tau_i}$ and the behaviour of $D_{\tau_1,\tau_2}$ for $\tau_i \leq \pi_i$. However this seems to be very difficult.
\end{rem}

\section{Fully packed loops with a set of nested arches}

\noindent In order to prove Theorem \ref{thm: main thm} we will need to calculate $D_{\pi_1,\pi_2}$ at $z_1= \ldots =z_{2(n_1+n_2)}=1$ for two noncrossing matchings $\pi_1,\pi_2$.
The following notations will simplify this task. We define
\begin{align*}
 f(i,j)&:=\frac{qz_i-q^{-1}z_j}{q-q^{-1}}, \quad g(i):=\frac{q-q^{-1}z_i}{q-q^{-1}},\quad
 h(i):= \frac{qz_i-q^{-1}}{q-q^{-1}},
\end{align*}
for $1 \leq i\neq j \leq 2n$.
Using this notations we obtain
\[
 \Psi_{()_n}=\prod_{1\leq i<j \leq n}f(i,j)f(n+i,n+j).
\]
 One verifies the following lemma by simple calculation.

\begin{lem}
\label{lem: operatoren explicit}
For $1 \leq i,j,k \leq 2n$ and $i \neq j$ one has
 \begin{enumerate}
 \item
 \label{item: alpha}
  $\D_k(f(i,j))=
 \begin{cases}
(q+q^{-1}) f(k,k+1)  \quad &(i,j)=(k,k+1),\\
 -(q+q^{-1})f(k,k+1) &(i,j)=(k+1,k),\\
 qf(k,k+1) & i=k; \; j\neq k+1,\\
 -qf(k,k+1) & i=k+1; \; j\neq k,\\
 -q^{-1}f(k,k+1) & j=k; \; i\neq k+1,\\
  q^{-1}f(k,k+1) & j=k+1; \; i\neq k,\\
 0 & \{i,j\}\cap \{k,k+1\}= \emptyset,
 \end{cases}$
 
 \item 
 \label{item: beta}
 $\D_k(g(i))=
\begin{cases}
-q^{-1}f(k,k+1) & i=k,\\
q^{-1}f(k,k+1) \quad & i=k+1,\\
0 & \text{otherwise},
 \end{cases}$
 
 \item 
  \label{item: gamma}
  $\D_k(h(i))=
\begin{cases}
qf(k,k+1) \quad & i=k,\\
-qf(k,k+1) & i=k+1,\\
0 & \text{otherwise}.
 \end{cases}$
 
\item
\label{item: potenzen} Let $m$ be a positive integer, then the following holds
\begin{align*}
\D_k(f(i,j)^m)&=\D_k(f(i,j))\sum_{l=0}^{m-1}f(i,j)^l \S_k(f(i,j)^{m-1-l}),\\
\D_k(g(i)^m)&=\D_k(g(i))\sum_{l=0}^{m-1}g(i)^l \S_k(g(i)^{m-1-l}),\\
\D_k(h(i)^m)&=\D_k(h(i))\sum_{l=0}^{m-1}h(i)^l \S_k (h(i)^{m-1-l}).
\end{align*}
\end{enumerate}
\end{lem}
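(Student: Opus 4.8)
The plan is to reduce all the cases to a single elementary fact about $\D_k$. First I would observe that if $p \in \Q(q)[z_1,\ldots,z_{2n}]$ is linear in each variable — which covers $f(i,j)$, $g(i)$ and $h(i)$ — then, writing $p = a\,z_k + b\,z_{k+1} + r$ with $a,b,r$ free of $z_k$ and $z_{k+1}$, one has $\S_k(p) - p = (a-b)(z_{k+1}-z_k)$. Plugging this into the definition \eqref{eq: defi of D_k} of $\D_k$, the factor $z_{k+1}-z_k$ cancels and
\[
 \D_k(p) \;=\; (a-b)(q z_k - q^{-1}z_{k+1}) \;=\; (a-b)(q-q^{-1})\,f(k,k+1);
\]
that is, $\D_k$ multiplies $(q-q^{-1})\,f(k,k+1)$ by the difference between the coefficient of $z_k$ and the coefficient of $z_{k+1}$ in $p$.

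Granting this, the three formulas for $\D_k(f(i,j))$, $\D_k(g(i))$ and $\D_k(h(i))$ become bookkeeping. For $f(i,j)=\frac{qz_i-q^{-1}z_j}{q-q^{-1}}$ the coefficient of $z_k$ equals $\frac{q}{q-q^{-1}}$ if $i=k$, equals $\frac{-q^{-1}}{q-q^{-1}}$ if $j=k$, and is $0$ otherwise, and symmetrically for the coefficient of $z_{k+1}$; running through the mutually exclusive ways $\{i,j\}$ can meet $\{k,k+1\}$ and substituting into the displayed formula reproduces the whole list, signs included. The polynomials $g(i)=\frac{q-q^{-1}z_i}{q-q^{-1}}$ and $h(i)=\frac{qz_i-q^{-1}}{q-q^{-1}}$ are handled the same way, more quickly, since each is affine in the single variable $z_i$: the coefficient difference $a-b$ is $-q^{-1}/(q-q^{-1})$, $q^{-1}/(q-q^{-1})$, or $0$ for $g$, and $q/(q-q^{-1})$, $-q/(q-q^{-1})$, or $0$ for $h$, according as $i=k$, $i=k+1$, or neither.

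For the power rule I would induct on $m$, using the product rule \eqref{eq: product rule}, $\D_k(fg)=\D_k(f)\,\S_k(g)+f\,\D_k(g)$, and the fact that $\S_k$ is a ring automorphism, so $\S_k(p^j)=\S_k(p)^j$. From $\D_k(p^m)=\D_k(p)\,\S_k(p^{m-1})+p\,\D_k(p^{m-1})$ and the inductive formula for $\D_k(p^{m-1})$, the reindexing $l\mapsto l+1$ in the second summand telescopes the two pieces into $\D_k(p^m)=\D_k(p)\sum_{l=0}^{m-1}p^l\,\S_k(p^{m-1-l})$; the three displayed identities of the last part are then the instances $p=f(i,j)$, $p=g(i)$, $p=h(i)$.

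I do not expect a genuine obstacle. The only things that need care are the cancellation of $z_{k+1}-z_k$ in the first step — which is exactly why $p$ must be linear in $z_k$ and $z_{k+1}$ — and keeping the signs straight through the case analysis in the first formula; both are precisely the "simple calculation" that the statement promises.
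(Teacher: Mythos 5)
Your proposal is correct, and since the paper offers no argument beyond ``one verifies this by simple calculation,'' your uniform reduction --- for $p$ linear in $z_k,z_{k+1}$, writing $p=az_k+bz_{k+1}+r$ gives $\D_k(p)=(a-b)(q-q^{-1})f(k,k+1)$, followed by coefficient bookkeeping and an induction with the product rule \eqref{eq: product rule} for the powers --- is exactly the intended verification. The only point left implicit is the wrap-around case $k=2n$, where the index $k+1$ must be read as $1$ (so $z_{k+1}=z_1$), consistent with the identification the paper uses later; with that convention your computation goes through verbatim.
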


We further introduce the abbreviation
\begin{align*}
 P(\alpha_{i,j}|\beta_i|\gamma_i):=\prod_{1\leq i \neq j\leq 2n}f(i,j)^{\alpha_{i,j}}\prod_{i=1}^{2n}
g(i)^{\beta_i}h(i)^{\gamma_i},
\end{align*}
where $\alpha_{i,j},\beta_i,\gamma_i$ are nonnegative integers for $1 \leq i \neq j \leq 2n$.
Our goal is to obtain a useful expression for $\D_{i_1}\circ \cdots \circ \D_{i_m} ( P(\alpha_{i,j}|\beta_i|\gamma_i))|_{z_1= \ldots = z_{2n}=1}$ for special values of $\alpha_{i,j},\beta_i$ and $\gamma_i$. By using the previous lemma it is very easy to see that $\D_{i_1}\circ \cdots \circ \D_{i_m}(P(\alpha_{i,j}|\beta_i|\gamma_i))$ is a sum of products of the form $P(\alpha^\prime_{i,j}|\beta^\prime_i|\gamma^\prime_i)$. The explicit form of this sum is easy to understand when only one $\D$-operator is applied but gets very complicated for more. However it turns out that $\D_{i_1}\circ \cdots \circ \D_{i_m} ( P(\alpha_{i,j}|\beta_i|\gamma_i))|_{z_1= \ldots = z_{2n}=1}$ is a polynomial in $\alpha_{i,j},\beta_i$ and $\gamma_i$, which is stated in Lemma \ref{lem: evaluation of P}. The next example hints at the basic idea behind this fact.

\begin{ex}
\label{ex: polynom ausrechnen}
Let $P=P(\alpha_{i,j}|\beta_i|\gamma_i)$ and $n=1$. We calculate $\D_1(P)_{z_1=z_2=1}$ explicitly. By using Lemma \ref{lem: S,D operator} and Lemma \ref{lem: operatoren explicit} we obtain for $\D_1(P)$ the expression.
\begin{align*}
\D_1(P)= &\D_1 \left( f(1,2)^{\alpha_{1,2}}f(2,1)^{\alpha_{2,1}}g(1)^{\beta_1}g(2)^{\beta_2}h(1)^{\gamma_1}h(2)^{\gamma_2}\right)\\
=&(q+q^{-1})\sum_{t=0}^{\alpha_{1,2}-1}f(1,2)^{\alpha_{1,2}+\alpha_{2,1}-t}f(2,1)^{t}g(1)^{\beta_2}g(2)^{\beta_1}h(1)^{\gamma_2}h(2)^{\gamma_1}+\\
&-(q+q^{-1})\sum_{t=0}^{\alpha_{2,1}-1}f(1,2)^{\alpha_{1,2}+\alpha_{2,1}-t}f(2,1)^{t}g(1)^{\beta_2}g(2)^{\beta_1}h(1)^{\gamma_2}h(2)^{\gamma_1}+\\
&-q^{-1}\sum_{t=0}^{\beta_1-1}f(1,2)^{\alpha_{1,2}+1}f(2,1)^{\alpha_{2,1}}g(1)^{\beta_1+\beta_2-t-1}g(2)^{t}h(1)^{\gamma_2}h(2)^{\gamma_1}+\\
&+q^{-1}\sum_{t=0}^{\beta_2-1}f(1,2)^{\alpha_{1,2}+1}f(2,1)^{\alpha_{2,1}}g(1)^{\beta_1+\beta_2-t-1}g(2)^{t}h(1)^{\gamma_2}h(2)^{\gamma_1}+\\
&+q\sum_{t=0}^{\gamma_1-1}f(1,2)^{\alpha_{1,2}+1}f(2,1)^{\alpha_{2,1}}g(1)^{\beta_1}g(2)^{\beta_2}h(1)^{\gamma_1+\gamma_2-t-1}h(2)^{t}+\\
&-q\sum_{t=0}^{\gamma_2-1}f(1,2)^{\alpha_{1,2}+1}f(2,1)^{\alpha_{2,1}}g(1)^{\beta_1}g(2)^{\beta_2}h(1)^{\gamma_1+\gamma_2-t-1}h(2)^{t}.
\end{align*}
By evaluating this at $z_1=z_2=1$ we obtain:
\[
\D_1(P)|_{z_1=z_2=1} = (q+q^{-1})(\alpha_{1,2}-\alpha_{2,1}) +q^{-1}(\beta_2-\beta_1)+q(\gamma_1-\gamma_2),
\]
which is a polynomial in the $\alpha_{i,j},\beta_i,\gamma_i$.\\
\end{ex}

The proof of Theorem \ref{thm: main thm} is achieved by using two main ingredients. First Theorem \ref{thm: new basis for splitted matchings} allows us to express the wheel polynomial $\Psi_{(\pi_1)_m\pi_2}$ in a suitable basis and second Lemma \ref{lem: evaluation of P} tells us what we have to expect when evaluating the basis at $z_1=\ldots=z_{2N}=1$. 

\begin{proof}[Proof of Theorem \ref{thm: main thm}]

In the following we show that the number $A_{(\pi_1)_m \pi_2}$ of FPLs with link pattern $(\pi_1)_m \pi_2$ is a polynomial in $m$. Together with \cite[Theorem $6.7$]{on_the_number_of_FPL}, which states that $A_{(\pi_1)_m \pi_2}$ is a polynomial in $m$ with requested degree and leading coefficient for large values of $m$, this proves Theorem \ref{thm: main thm}.\\

Set $N=m+n_1+n_2$ and $q=e^{\frac{2 \pi i}{3}}$. By Theorem \ref{thm: wheel poly to FPLs}, Theorem \ref{thm: Razumov-Stroganov} and Theorem \ref{thm: psi basis for wheel poly} one has
\[
A_{(\pi_1)_m\pi_2}=\Psi_{(\pi_1)_m\pi_2}|_{z_1=\ldots=z_{2N}=1}=\Psi_{\rho^{n_2}((\pi_1)_m\pi_2)}|_{z_1=\ldots=z_{2N}=1}.
\]
Theorem \ref{thm: new basis for splitted matchings} implies that $\Psi_{\rho^{n_2}((\pi_1)_m\pi_2)}$ is a linear combination of $D_{(\tau_1)_m,\tau_2}$ with $\tau_i \leq \pi_i$ for $i=1,2$. By definition $D_{(\tau_1)_m,\tau_2}$ is of the form $\prod_{j=1}^k\D_{i_j} (\Psi_{()_N})$ with $k \leq|\lambda(\pi_1)|+|\lambda(\pi_2)|$ and $i_j \in \{1,\ldots ,n_2-2 ,N-n_1+2,\ldots,N+n_1-2,2N-n_2+2,\ldots,2N\}$ for $1 \leq j \leq k$. The operator $\D_{i_j}$ acts for $1 \leq j \leq k$ trivially on $z_i$ with $i \in I:=\{n_2+1,\ldots, N-n_1,N+n_1+1,\ldots,2N-n_2\}$. Hence one has
\[
 \left.\left(\prod_{j=1}^k \D_{i_j} (\Psi_{()_N})\right)\right|_{z_1=\ldots=z_{2N}=1}=
 \left.\left(\prod_{j=1}^k \D_{i_j} (\Psi_{()_N| \forall i \in I: z_i=1})\right)\right|_{\forall i \in \{1,\ldots,2N\}\setminus I: z_i=1}.
\]

The polynomial $\left.\Psi_{()_N}\right|_{z_i=1 \forall i \in I}$ is a polynomial in the $2(n_1+n_2)$ variables $z_i$, where $i$ is an element of  $\{1,\ldots,2N\}\setminus I$. For simplicity we substitute these remaining variables with $z_1, \ldots, z_{2(n_1+n_2)}$ whereby we keep the same order on the indices.
Hence $\left.\Psi_{()_N}\right|_{z_i=1 \forall i \in I}$ can be written in the form $P=P(\alpha_{i,j}|\beta_i|\gamma_i)$ with 

 \begin{align*}
  \alpha_{i,j}&=
  \begin{cases}
   1 \quad &  i<j \textit{ and } \left(j\leq n_1+n_2 \text{ or } i >n_1+n_2\right),\\
   0 &\text{otherwise},
  \end{cases}\\
 \beta_i&=
 \begin{cases}
  m \quad &i \in \{n_2+1, \ldots, n_1+n_2,2n_1+n_2+1, \ldots, 2(n_1+n_2)\},\\
  0 &\text{otherwise},
  \end{cases}\\
 \gamma_i&=
 \begin{cases}
 m \quad & i \in \{1, \ldots, n_2,n_1+n_2+1, \ldots, 2n_1+n_2\},\\
 0 &\text{otherwise}, 
 \end{cases}
 \end{align*}
 whereas all the $z_i$ in $f(i,j),g(i)$ and $h(i)$ are replaced by $\hat{z}_i$.
Lemma \ref{lem: evaluation of P} implies that $\prod_{j=1}^k\D_{i_j}(P)$ is a polynomial in $m$ of degree at most $k\leq|\lambda(\pi_1)|+|\lambda(\pi_2)|$ which proves the statement.
\end{proof}

We conclude the proof of Theorem \ref{thm: main thm} by the following Lemma.

\begin{lem}
\label{lem: evaluation of P}
 Let $P=P(\alpha_{i,j}|\beta_i|\gamma_i)$, $m$ an integer and $i_1, \ldots, i_m \in \{1, \ldots, 2n\}$. There exists a polynomial $Q \in \mathbb{Q}(q)[y_1,\ldots,y_{2n(2n+1)}]$ with total degree at most $m$ such that 
 \[
 \left. \D_{i_1}\circ\cdots\circ \D_{i_m} (P)\right|_{z_1=\ldots=z_{2n}=1}
=Q((\alpha_{i,j}),(\beta_i),(\gamma_i)).
\]
\end{lem}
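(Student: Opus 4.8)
The plan is to prove Lemma \ref{lem: evaluation of P} by induction on $m$, the number of $\D$-operators applied. The base case $m=0$ is trivial: $P|_{z_1=\cdots=z_{2n}=1}=1$, a constant polynomial of degree $0$ in the $\alpha_{i,j},\beta_i,\gamma_i$. For the inductive step, the crucial observation is that although $\D_{i_1}\circ\cdots\circ\D_{i_m}(P)$ is a complicated sum, after applying Lemma \ref{lem: operatoren explicit}(4) repeatedly it remains a sum of terms each of which is (up to a scalar in $\Q(q)$) of the form $P(\alpha'_{i,j}|\beta'_i|\gamma'_i)$, where the new exponents $\alpha',\beta',\gamma'$ are obtained from the old ones by a \emph{finite} sequence of operations: permuting exponents according to $\S_k$, and — each time a $\D$-operator actually "hits" a factor — introducing a summation index $t$ ranging over $\{0,1,\ldots,(\text{some linear combination of the exponents})-1\}$ and redistributing exponents linearly in $t$. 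The key point is that each application of a single $\D_k$ to such a $P$ produces a sum of at most (a bounded number of) such inner sums; so $\D_{i_1}\circ\cdots\circ\D_{i_m}(P)$ is a sum over (boundedly many) nested summations of depth $\le m$, each summand a monomial-type term $P(\cdots)$ whose exponents and summation bounds are affine-linear in the $\alpha,\beta,\gamma$ and the summation indices.

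The second ingredient is the evaluation at $z_1=\cdots=z_{2n}=1$: at this point $f(i,j)=g(i)=h(i)=1$, so every surviving $P(\cdots)$-term evaluates to $1$ and the entire expression collapses to a $\Q(q)$-linear combination of the \emph{number of lattice points} in the nested summation ranges. Concretely, after evaluation $\D_{i_1}\circ\cdots\circ\D_{i_m}(P)|_{z=1}$ is a $\Q(q)$-linear combination of expressions of the form
\[
\sum_{t_1=0}^{L_1-1}\ \sum_{t_2=0}^{L_2-1}\cdots\sum_{t_r=0}^{L_r-1} 1,
\qquad r\le m,
\]
where each upper bound $L_j$ is an affine-linear function of the $\alpha_{i,j},\beta_i,\gamma_i$ and of $t_1,\ldots,t_{j-1}$ with integer coefficients. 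It is a standard fact (an $r$-fold iterated telescoping / Faulhaber-type argument) that such an iterated sum, as a function of the parameters $\alpha_{i,j},\beta_i,\gamma_i$, is a polynomial of total degree at most $r\le m$: summing a polynomial of degree $d$ in $t_r$ over $t_r\in\{0,\ldots,L_r-1\}$ yields a polynomial of degree $d+1$ in $L_r$ and hence in the remaining parameters, and one inducts on $r$. Taking the $\Q(q)$-linear combination over the boundedly many terms preserves the degree bound $m$, giving the desired $Q\in\Q(q)[y_1,\ldots,y_{2n(2n+1)}]$ (the number of variables $2n(2n+1)$ accounting for the $2n(2n-1)$ indices $(i,j)$ with $i\ne j$ plus $2n$ indices for the $\beta_i$ plus $2n$ for the $\gamma_i$).

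To organize the induction cleanly, I would prove a slightly stronger statement: for \emph{any} fixed finite list of $\Q(q)$-coefficients and any finite family of terms $P(\alpha^{(s)}_{i,j}|\beta^{(s)}_i|\gamma^{(s)}_i)$ whose exponents are affine-linear (with integer coefficients) in the original parameters, and for \emph{fixed} parameter-dependence of that affine-linear form, applying one more $\D_k$ and evaluating at $z=1$ again yields such a configuration but with one extra nesting level. The bookkeeping is made routine by Lemma \ref{lem: operatoren explicit}: parts (1)–(3) list exactly the seven/three outcomes of $\D_k$ on a single factor, and part (4) — the explicit "$q$-Leibniz" formula for $\D_k$ on a power — shows the new exponents are $f(i,j)^l\,\S_k(f(i,j)^{m-1-l})$ etc., i.e. linear in the summation index $l$. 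The main obstacle, and the only place requiring genuine care, is controlling the interaction of $\S_k$ (which permutes the variables $z_k\leftrightarrow z_{k+1}$, hence permutes exponents) with subsequent $\D$-operators: one must track which factor ends up in position $k$ or $k+1$ at each stage. However, since we evaluate at the fully symmetric point $z_1=\cdots=z_{2n}=1$ at the very end, the precise permutation is irrelevant for the final value — only the \emph{combinatorial structure} of the nested summation bounds matters, and those bounds are manifestly affine-linear in the parameters throughout. Thus the permutation bookkeeping, while notationally heavy, does not threaten the degree bound, and the proof goes through by the iterated-summation argument above.
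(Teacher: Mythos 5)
Your proposal is correct and in essence reproduces the paper's argument: both expand each application of $\D_k$ via the product rule of Lemma \ref{lem: S,D operator} and the case analysis of Lemma \ref{lem: operatoren explicit}, note that the resulting exponents and summation bounds stay affine-linear in the original parameters and the summation indices (uniformly in the parameter values), and conclude with the fact that summing a polynomial of degree $d$ over such a range yields a polynomial of degree at most $d+1$, so the degree grows by at most one per operator. The paper organizes this as an induction on $m$, applying the inductive polynomial to the affinely parametrized terms produced by the last operator and then summing over the group index $t$, whereas you unfold the whole computation into nested sums of $1$ evaluated at $z_1=\cdots=z_{2n}=1$; this is a difference of bookkeeping, not of substance.
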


\begin{proof} We prove the theorem by induction on $m$. The statement is trivial for $m=0$, hence let $m>0$ and set $k=i_m$. We can express $\D_k(P)$ as
\begin{equation}
\label{eq: splitting of Dk}
\D_{k}P= \sum_{s \in S}a_s P_s,
\end{equation}
for a finite set $S$ of indices, $a_s \in \{\pm q, \pm q^{-1}, \pm (q+q^{-1}) \}$ and $P_s=P(\alpha_{s;i,j}|\beta_{s;i}|\gamma_{s;i})$ for all $s \in S$.
Indeed we can use iteratively the product rule for the operator $\D_k$, stated in Lemma \ref{lem: S,D operator}, to split $\D_k(P)$ into a sum. Since this splitting depends on the order of the factors, we fix it to be
\begin{align*}
 P=\prod_{i=1}^{2n}\prod_{\substack{j=1,\\ j \neq i}}^{2n} f(i,j)^{\alpha_{i,j}} \prod_{i=1}^{2n}g(i)^{\beta_i} \prod_{i=1}^{2n}h(i)^{\gamma_i}.
\end{align*}
 Lemma \ref{lem: operatoren explicit} implies that every summand is of the form $P_s=P(\alpha_{s;i,j}|\beta_{s;i}|\gamma_{s;i})$ and the coefficients $a_s$ are as stated above, which verifies \eqref{eq: splitting of Dk}.\\
 
 We express $\D_k(P)$ more explicitly by using the above defined ordering of the factors and Lemma \ref{lem: S,D operator}
\begin{subequations}
\begin{align}
\D_{k}(P) &=
\D_{k} \left( \prod_{1\leq i\neq j\leq 2n}f(i,j)^{\alpha_{i,j}} 
\prod_{i=1}^{2n} g(i)^{\beta_i}h(i)^{\gamma_i} \right)\nonumber \\
&= \sum_{1 \leq i \neq j \leq 2n} \prod_{\substack{1 \leq i^\prime \neq j^\prime\leq 2n\\
(i^\prime<i) \lor (i^\prime =i,j^\prime < j)}}f(i^\prime,j^\prime)^{\alpha_{i^\prime,j^\prime}}\times
\D_k(f(i,j)^{\alpha_{i,j}}) \nonumber \\
\label{eq: calculating D_i_k A}
&\times \S_k \left(\prod_{\substack{1 \leq i^\prime \neq j^\prime\leq 2n\\
(i^\prime>i) \lor (i^\prime =i, j^\prime > j)}}f(i^\prime,j^\prime)^{\alpha_{i^\prime,j^\prime}} \prod_{i^\prime=1}^{2n} g(i^\prime)^{\beta_{i^\prime}}h(i^\prime)^{\gamma_{i^\prime}}\right)\\
&+\sum_{i=1}^{2n} \prod_{1 \leq i^\prime \neq j^\prime\leq 2n}f(i^\prime,j^\prime)^{\alpha_{i^\prime,j^\prime}}
\prod_{i^\prime=1}^{i-1}g(i^\prime)^{\beta_{i^\prime}} \times \D_k(g(i)^{\beta_{i}})\nonumber \\
\label{eq: calculating D_i_k B}
&\times \S_k\left(\prod_{i^\prime=i+1}^{2n}g(i^\prime)^{\beta_{i^\prime}} \prod_{i^\prime=1}^{2n}h(i^\prime)^{\gamma_{i^\prime}} \right)\\
&+\sum_{i=1}^{2n} \prod_{1 \leq i^\prime \neq j^\prime\leq 2n}f(i^\prime,j^\prime)^{\alpha_{i^\prime,j^\prime}}
\prod_{i^\prime=1}^{2n}g(i^\prime)^{\beta_{i^\prime}}
\prod_{i^\prime=1}^{i-1}h(i^\prime)^{\gamma_{i^\prime}} \times \D_k(h(i)^{\gamma_{i}})\nonumber\\
\label{eq: calculating D_i_k C}
&\times \S_k\left(\prod_{i^\prime=i+1}^{2n}h(i^\prime)^{\gamma_{i^\prime}} \right).
\end{align}
\end{subequations}

Using Lemma \ref{lem: operatoren explicit} we split every summand in \eqref{eq: calculating D_i_k A} up into a sum of $P_s$ with $s \in S$ and say that these $P_s$ originate from this very summand. We define $A_{i,j}$ for $1 \leq i \neq j \leq 2n$ to be the set consisting of all $s\in S$ such that $P_s$ originates from the summand in \eqref{eq: calculating D_i_k A} with control variables $i,j$. analogously we define for $1 \leq i \leq 2n$ the sets $B_i$ and $C_i$ to consist of all $s \in S$ such that $P_s$ originates from the summand with control variable $i$ in \eqref{eq: calculating D_i_k B} or \eqref{eq: calculating D_i_k C} respectively. Hence we can write the set $S$ as the disjoint union
\[
S= \left(\bigcup_{1 \leq i\neq j \leq 2n}A_{i,j}\right) \cup \left(\bigcup_{1 \leq i \leq 2n}B_i\right) \cup \left(\bigcup_{1 \leq i \leq 2n}C_i\right).
\]

Lemma \ref{lem: operatoren explicit} implies $\D_k(f(i,j))=0$ for $\{i,j\}\cap \{{k},{k}+1\}= \emptyset$ and $\D_k(g(i))=\D_k(h(i))=0$  for $i \notin \{k,k+1\}$. Therefore the sets $A_{i,j},B_i,C_i$ are empty in these cases.\\

Let $1\leq i \neq j \leq 2n$ be fixed with $\{i,j\}\cap \{{k},{k}+1\} \neq \emptyset$ and let $\sigma \in \mathfrak{S}_{2n}$ be the permutation $\sigma=({k},{k}+1)$. Set $\Lambda_{i,j}=\{(i^\prime,j^\prime): 1 \leq i^\prime \neq j^\prime\leq 2n, (i^\prime<i) \lor (i^\prime =i,j^\prime < j)\}$. The definition of $A_{i,j}$ and Lemma \ref{lem: operatoren explicit} imply for all $(i^\prime,j^\prime) \notin \{(i,j),(\sigma(i),\sigma(j)),(k,k+1)\}$ and all $s \in A_{i,j}$:

\begin{align*}
\alpha_{s;i^\prime,j^\prime}&=
\begin{cases}
\alpha_{i^\prime,j^\prime} \quad &\{i^\prime,j^\prime\}\cap\{k,k+1\}= \emptyset \text{ or } \left((i^\prime,j^\prime), (\sigma(i^\prime),\sigma(j^\prime)) \in \Lambda_{i,j}\right),\\
\alpha_{i^\prime,j^\prime}+\alpha_{\sigma(i^\prime),\sigma(j^\prime)} &\{i^\prime,j^\prime\}\cap\{k,k+1\}\neq \emptyset, (i^\prime,j^\prime) \in \Lambda_{i,j}, (\sigma(i^\prime),\sigma(j^\prime)) \notin  \Lambda_{i,j},\\
0 &\{i^\prime,j^\prime\}\cap\{k,k+1\}\neq \emptyset, (i^\prime,j^\prime) \notin \Lambda_{i,j}, (\sigma(i^\prime),\sigma(j^\prime)) \in  \Lambda_{i,j},\\
\alpha_{\sigma(i^\prime),\sigma(j^\prime)} &\{i^\prime,j^\prime\}\cap\{k,k+1\}\neq \emptyset, (i^\prime,j^\prime),(\sigma(i^\prime),\sigma(j^\prime)) \notin \Lambda_{i,j}.
\end{cases}
\end{align*}
If $(k,k+1) \notin \{(i,j),(\sigma(i),\sigma(j))\}$, the parameter $\alpha_{s;k,k+1}$ is given as the adequate value of the above case analysis added by $1$. Further we obtain $\beta_{s;i^\prime} = \beta_{\sigma(i^\prime)}$  and $\gamma_{s;i^\prime}= \gamma_{\sigma(i^\prime)}$ for all $1 \leq i^\prime \leq 2n$  and $s \in A_{i,j}$. By Lemma \ref{lem: operatoren explicit} the constant $a_s$ is for all $s \in A_{i,j}$ determined by the corresponding constant of $\D_k(f(i,j))$  and hence not depending on $s$. The last statement of Lemma \ref{lem: operatoren explicit} implies that we can list the elements of $A_{i,j} = \{s_1, \ldots, s_{\alpha_{i,j}}\}$ such that we have the following description for the remaining parameters $\alpha_{s;i,j}$  and $\alpha_{s;\sigma(i),\sigma(j)}$ :\\
\begin{align*}
\alpha_{s_t;i,j}&=
\begin{cases}
\alpha_{i,j}+\alpha_{j,i}+1-t \quad & i=k,j=k+1,\\
\alpha_{i,j}-t & i=k+1,j=k,\\
\alpha_{i,j}+\alpha_{\sigma(i),\sigma(j)}-t & \{i,j\}\cap\{k,k+1\}=\{k\},\\
\alpha_{i,j}-t & \{i,j\}\cap\{k,k+1\}=\{k+1\},
\end{cases}\\
\alpha_{s_t;\sigma(i),\sigma(j)}&=
\begin{cases}
\alpha_{i,j}+\alpha_{j,i}-\alpha_{s_t;i,j} & \{i,j\}=\{k,k+1\},\\
\alpha_{i,j}+\alpha_{\sigma(i),\sigma(j)}-\alpha_{s_t;i,j}-1 & \text{otherwise},
\end{cases}
\end{align*}
with $1 \leq t \leq \alpha_{i,j}$. If $k=2n$ the first two and last two cases in the description of $\alpha_{s_t;i,j}$ switch places, which is due to the fact that we identify $k+1$ with $1$ for $k=2n$.\\

There exists an analogue description for the sets $B_i, C_i$ and $i \in \{{k},{k}+1\}$  as above, whereas the only parameters that change are given in the case of $B_i$ by
\[
 \beta_{s_t;k}=\beta_{{k}}+\beta_{{k}+1}-t, \qquad \beta_{s_t;k+1}= t-1,
\]
with $1 \leq t \leq \beta_i$ and in the case of $C_i$ by 
\[
 \gamma_{s_t;k}=\gamma_{{k}}+\gamma_{{k}+1}-t, \qquad \gamma_{s_t;k+1}= t-1,
\]
with $1 \leq t \leq \gamma_i$. For $k=2n$ the description of $\beta_{s_t;k},\beta_{s_t;k+1}$ and $\gamma_{s_t;k},\gamma_{s_t;k}$ are interchanged.\\

We know by induction that 
$\D_{i_1}\circ\cdots \circ \D_{i_{m-1}} \left(P(a_{i,j}|b_{i}|c_{i}) \right)|_{z_1=\ldots=z_{2n}=1}$ is a polynomial $Q^\prime$ of degree at most $m-1$ in $ (a_{i,j}),(b_{i})$ and $(c_{i})$. Since the operators $\D_{i}$ are linear we can write
\begin{multline}
 \left.\D_{i_1}\circ\cdots\circ \D_{i_m} (P)\right|_{z_1=\ldots=z_{2n}=1} \\
 =\left.\D_{i_1}\circ\cdots\circ \D_{i_{m-1}}\left( \sum_{s \in S} a_s P(\alpha_{s;i,j}|\beta_{s;i}|\gamma_{s;i}) \right)\right|_{z_1=\ldots=z_{2n}=1}\\
= \sum_{s \in S} a_s \D_{i_1}\circ\cdots\circ \D_{i_{m-1}} \left(P(\alpha_{s;i,j}|\beta_{s;i}|\gamma_{s;i})\right)|_{z_1=\ldots=z_{2n}=1} \\
=\sum_{s \in S}a_s Q((\alpha_{s;i,j}),(\beta_{s;i}),(\gamma_{s;i})).
\end{multline}
The  description above implies that if we restrict ourselves to $s\in A_{i,j}$, $s \in B_i$ or $s \in C_i$ respectively, $a_s$ is independent of $s$, the parameters $\alpha_{s;i^\prime,j^\prime},\beta_{s;i^\prime},\gamma_{s;i^\prime}$ are constant for $(i^\prime,j^\prime)\neq (i,j),(\sigma(i),\sigma(j))$ or $i^\prime \neq k,k+1$ respectively and otherwise depending linearly on a parameter $t$ which runs from $1$ up to the cardinality of the set $A_{i,j}$, $B_i$ or $C_i$ respectively.
The fact, that for a polynomial $p(t)$ of degree $d$ the sum $\sum_{x \leq t \leq y}p(t)$ is a polynomial in $x$ and $y$ of degree at most $d+1$, together with the previous statement imply that the sum
\begin{align*}
&\sum_{s \in A_{i,j}}a_s Q((\alpha_{s;i,j}),(\beta_{s;i}),(\gamma_{s;i})),
\end{align*}
and the analogous sums for $s \in B_i$ or $s \in C_i$ respectively are polynomials in $ (\alpha_{i,j}),(\beta_{i}),(\gamma_{i})$ of degree at most $m$ for all $1 \leq i\neq j \leq 2n$. Therefore
\begin{align*}
\D_{i_1}\circ\cdots \circ \D_{i_{m}}(P)|_{z_1=\ldots=z_{2n}=1}&=\sum_{s \in S} a_s Q((\alpha_{s;i,j}),(\beta_{s;i}),(\gamma_{s;i})),
\end{align*}
is a polynomial in $ (\alpha_{i,j}),(\beta_{i}),(\gamma_{i})$ of degree at most $m$.
\end{proof}

\end{document}